\setlist[enumerate,1]{font=\upshape}
\definecolor{todo-background-color}{gray}{0.95}
    \def\@settitle{%
      \vspace*{-10pt}
      \begin{flushleft}%
        \LARGE\bfseries
        \strut\@title\strut
      \end{flushleft}%
    }
    \def\@setauthors{%
      \begingroup
      \def\thanks{\protect\thanks@warning}%
      \trivlist
      \raggedright
      \large \@topsep27\p@\relax
      \advance\@topsep by -\baselineskip
    \item\relax
      \author@andify\authors
      \def\\{\protect\linebreak}%
      \authors
      \ifx\@empty\contribs
      \else
      ,\penalty-3 \space \@setcontribs
      \@closetoccontribs
      \fi
      \normalfont
      \endtrivlist
      \endgroup
    }
    \def\@setaddresses{\par
      \nobreak \begingroup
      \small\raggedright
      \def\author##1{\nobreak\addvspace\smallskipamount}%
      \def\\{\unskip, \ignorespaces}%
      \interlinepenalty\@M
      \def\address##1##2{\begingroup
        \par\addvspace\bigskipamount\noindent
        \@ifnotempty{##1}{(\ignorespaces##1\unskip) }%
        {\ignorespaces##2}\par\endgroup}%
      \def\curraddr##1##2{\begingroup
        \@ifnotempty{##2}{\nobreak\noindent\curraddrname
          \@ifnotempty{##1}{, \ignorespaces##1\unskip}\/:\space
          ##2\par}\endgroup}%
      \def\email##1##2{\begingroup
        \@ifnotempty{##2}{\nobreak\noindent E-mail address%
          \@ifnotempty{##1}{, \ignorespaces##1\unskip}\/:\space
          \ttfamily##2\par}\endgroup}%
      \def\urladdr##1##2{\begingroup
        \def~{\char`\~}%
        \@ifnotempty{##2}{\nobreak\noindent\urladdrname
          \@ifnotempty{##1}{, \ignorespaces##1\unskip}\/:\space
          \ttfamily##2\par}\endgroup}%
      \addresses
      \endgroup
      \global\let\addresses=\@empty
    }
    \def\@setabstracta{%
      \ifvoid\abstractbox
      \else
      \skip@17pt \advance\skip@-\lastskip
      \advance\skip@-\baselineskip \vskip\skip@
      \box\abstractbox
      \prevdepth\z@ 
      \vskip-28pt
      \fi
    }
    \renewenvironment{abstract}{%
      \ifx\maketitle\relax
      \ClassWarning{\@classname}{Abstract should precede
        \protect\maketitle\space in AMS document classes; reported}%
      \fi
      \global\setbox\abstractbox=\vtop \bgroup
      \normalfont\small
      \list{}{\labelwidth\z@
        \leftmargin0pc \rightmargin\leftmargin
        \listparindent\normalparindent \itemindent\z@
        \parsep\z@ \@plus\p@
        
      }%
    \item[\hskip\labelsep\bfseries\abstractname.]%
    }{%
      \endlist\egroup
      \ifx\@setabstract\relax \@setabstracta \fi
    }
    \def\ps@headings{\ps@empty
      \def\@evenhead{%
        \setTrue{runhead}%
        \normalfont\scriptsize
        \rlap{\thepage}\hfill
        \def\thanks{\protect\thanks@warning}%
        \leftmark{}{}}%
      \def\@oddhead{%
        \setTrue{runhead}%
        \normalfont\scriptsize
        \def\thanks{\protect\thanks@warning}%
        \rightmark{}{}\hfill \llap{\thepage}}%
      \let\@mkboth\markboth
    }\ps@headings
    \def\section{\@startsection{section}{1}%
      \z@{-1.4\linespacing\@plus-.5\linespacing}{.8\linespacing}%
      {\normalfont\bfseries\Large}}
    \def\subsection{\@startsection{subsection}{2}%
      \z@{-.8\linespacing\@plus-.3\linespacing}{.5\linespacing\@plus.2\linespacing}%
      {\normalfont\bfseries\large}}
    \def\subsubsection{\@startsection{subsubsection}{3}%
      \z@{.7\linespacing\@plus.2\linespacing}{-1.5ex}%
      {\normalfont\itshape}}
    \def\paragraph{\@startsection{paragraph}{4}%
      \z@{.7\linespacing\@plus.2\linespacing}{-1.5ex}%
      {\normalfont\itshape}}
    \def\@secnumfont{\bfseries}
    \renewcommand\contentsnamefont{\bfseries}
    \def\@starttoc#1#2{\begingroup
      \setTrue{#1}%
      \par\removelastskip\vskip\z@skip
      \@startsection{}\@M\z@{\linespacing\@plus\linespacing}%
      {.5\linespacing}{
        \contentsnamefont}{#2}%
      \ifx\contentsname#2%
      \else \addcontentsline{toc}{section}{#2}\fi
      \makeatletter
      \@input{\jobname.#1}%
      \if@filesw
      \@xp\newwrite\csname tf@#1\endcsname
      \immediate\@xp\openout\csname tf@#1\endcsname \jobname.#1\relax
      \fi
      \global\@nobreakfalse \endgroup
      \addvspace{32\p@\@plus14\p@}%
      \let\tableofcontents\relax
    }
    \def\contentsname{Contents}
    \def\l@section{\@tocline{2}{.5ex}{0mm}{5pc}{}}
    \def\l@subsection{\@tocline{2}{0pt}{2em}{5pc}{}}
\def\to{\mathchoice{\longrightarrow}{\rightarrow}{\rightarrow}{\rightarrow}}
\newcommand{\shortxra}[2][]{\ext@arrow 0359\rightarrowfill@{#1}{#2}}
\def\longrightarrowfill@{\arrowfill@\relbar\relbar\longrightarrow}
\newcommand{\longxra}[2][]{\ext@arrow 0359\longrightarrowfill@{#1}{#2}}
\renewcommand{\xrightarrow}[2][]{\mathchoice{\longxra[#1]{#2}}%
  {\shortxra[#1]{#2}}{\shortxra[#1]{#2}}{\shortxra[#1]{#2}}}
\def\addtagsub#1{\let\oldtf=\tagform@\def\tagform@##1{\oldtf{##1}\hbox{$_{#1}$}}}
\def\Nopagebreak{\@nobreaktrue\nopagebreak}
\newtheoremstyle{theorem-giventitle}
        {}{}              
        {\itshape}                      
        {}                              
        {\bfseries}                     
        {.}                             
        {\thm@headsep}                             
        {\thmnote{\bfseries#3}}
\newtheoremstyle{theorem-givenlabel}
        {}{}              
        {\itshape}                      
        {}                              
        {\bfseries}                     
        {.}                             
        {\thm@headsep}                             
        {\thmname{#1}~\thmnumber{#3}\setcurrentlabel{#3}}
\newtheoremstyle{definition-giventitle}
        {}{}              
        {}                      
        {}                              
        {\bfseries}                     
        {.}                             
        {\thm@headsep}                             
        {\thmnote{\bfseries#3}}
\def\setcurrentlabel#1{\gdef\@currentlabel{#1}}
\newtheorem{theorem}{Theorem}[section]
\newtheorem{theoremalpha}{Theorem}
\newtheorem{corollary}[theorem]{Corollary}
\newtheorem{lemma}[theorem]{Lemma}
\theoremstyle{definition}
\newtheorem{definition}[theorem]{Definition}
\newtheorem{remark}[theorem]{Remark}
\theoremstyle{theorem-giventitle}
\newtheorem{theorem-named}{}
\theoremstyle{theorem-givenlabel}
\newtheorem{theorem-labeled}{Theorem}
\theoremstyle{definition-giventitle}
\newtheorem{definition-named}{}
\newtheorem{conjecture-named}{}
\newtheorem{case-named}{}
\numberwithin{equation}{section}
\def\Z{\mathbb{Z}}
\def\Q{\mathbb{Q}}
\def\W{\mathbb{W}}
\def\cM{\mathcal{M}}
\def\cR{\mathcal{R}}
\def\cT{\mathcal{T}}
\def\sB{\mathsf{B}}
\def\sD{\mathsf{D}}
\def\sK{\mathsf{K}}
\def\sL{\mathsf{L}}
\def\sW{\mathsf{W}}
\def\sm{\smallsetminus}
\DeclareMathOperator\Ker{Ker}
\DeclareMathOperator\inte{int}
\DeclareMathOperator\Arf{Arf}
\DeclareMathOperator\SLev{SL}
\DeclareMathOperator\slev{sl}
\def\otimesover#1{\mathbin{\mathop{\otimes}_{#1}}}
\def\cupover#1{\mathbin{\mathop{\cup}_{#1}}}
\def\csum{\mathbin{\#}}
\def\csumover#1{\mathbin{\mathop{\csum}_{#1}}}
\def\ol#1{\overline{#1}{}}
\def\rddots{\mathinner{\mkern1mu\raise\p@
    \vbox{\kern7\p@\hbox{.}}\mkern2mu
    \raise4\p@\hbox{.}\mkern2mu\raise7\p@\hbox{.}\mkern1mu}}
\def\smallItree#1{
  \vcenter{\hbox{\tikz[x=1pt,y=1pt,thick,fill,inner sep=0]{
        \draw (0,0) circle(.6pt)--(0:10) node[right,xshift=.5ex]{$#1$}}}}}
\def\smalltwItree#1{
  \vcenter{\hbox{\tikz[x=1pt,y=1pt,thick,fill,inner sep=0]{
        \draw (0,0) node[left,xshift=-.5ex]{$\tw$} --(0:10) node[right,xshift=.5ex]{$#1$}}}}}
\def\smallYtree#1#2#3{%
  \vcenter{\hbox{\tikz[x=1pt,y=1pt,inner sep=0,thick,fill]{
        \draw (0,0)--++(180:7) node[left,xshift=-.5ex]{$#1$} (0,0)
        -- +(30:8) node[right,xshift=.5ex]{$\scriptstyle #3$}
        (0,0) -- +(-30:8)
        node[right,xshift=.5ex]{$\scriptstyle #2$}}}}}
\def\smallrootedYtree#1#2{%
  \vcenter{\hbox{\tikz[x=1pt,y=1pt,inner sep=0,thick,fill]{
        \draw (0,0)--++(180:7) circle(.6pt) (0,0)
        -- +(30:8) node[right,xshift=.5ex]{$\scriptstyle #2$}
        (0,0) -- +(-30:8)
        node[right,xshift=.5ex]{$\scriptstyle #1$}}}}}
\def\drawtw#1#2{\tikz[baseline=-.07ex,cap=round,scale=#1,line width=#2]
  {\draw (0,1ex) ..controls+(-.5ex,0) and +(-.5ex,0)..
    (0ex,0ex) ..controls+(.5ex,0) and +(-.5ex,0)..
    (.85ex,1ex) ..controls+(.5ex,0) and +(.5ex,0).. (.85ex,0);}}
\newsavebox{\ttwbox}\newsavebox{\stwbox}\newsavebox{\sstwbox}
\sbox{\ttwbox}{\drawtw{.9}{.6pt}}
\sbox{\stwbox}{\drawtw{.54}{.5pt}}
\sbox{\sstwbox}{\drawtw{.45}{.45pt}}
\def\tw{{\mathchoice{\usebox{\ttwbox}}{\usebox{\ttwbox}}{\usebox{\stwbox}}{\usebox{\sstwbox}}}}
\begin{document}

\title
[Rational Whitney tower filtration of links]
{Rational Whitney tower filtration of links}

\author{Jae Choon Cha}
\address{
  Department of Mathematics\\
  POSTECH\\
  Pohang Gyeongbuk 37673\\
  Republic of Korea\quad
  \linebreak
  School of Mathematics\\
  Korea Institute for Advanced Study \\
  Seoul 02455\\
  Republic of Korea
}
\email{jccha@postech.ac.kr}

\def\subjclassname{\textup{2010} Mathematics Subject Classification}
\expandafter\let\csname subjclassname@1991\endcsname=\subjclassname
\expandafter\let\csname subjclassname@2000\endcsname=\subjclassname
\subjclass{%
  57N13, 
  57N70, 
  57M25
}
\keywords{Whitney towers, Concordance, Rational homology 4-balls, Links, Milnor invariants, higher order Arf invariants}

\begin{abstract}
  We present complete classifications of links in the 3-sphere modulo
  framed and twisted Whitney towers in a rational homology 4-ball.
  This provides a geometric characterization of the vanishing of the
  Milnor invariants of links in terms of Whitney towers.  Our result
  also says that the higher order Arf invariants, which are
  conjectured to be nontrivial, measure the potential difference
  between the Whitney tower theory in rational homology 4-balls and
  that in the 4-ball extensively developed by Conant, Schneiderman and
  Teichner.
\end{abstract}

\maketitle


\section{Introduction}

Topology of dimension 4 is different from high dimensions because the
Whitney move may fail.  The essential problem is to find an embedded
Whitney disk along which a pair of intersections of two sheets could
be removed by a Whitney move.  Once an immersed Whitney disk is
obtained from fundamental group data, one may try to remove double
points of the disk by finding a next stage of Whitney disks.
Iterating this, we are led to the notion of a \emph{Whitney tower}.

Since work of Cochran, Orr and
Teichner~\cite{Cochran-Orr-Teichner:1999-1}, concordance of knots and
links, which is the ``local case'' of general disk embedding, has been
extensively studied via frameworks formulated in terms of Whitney
towers.  In this paper, we will focus on asymmetric Whitney towers in
dimension 4 bounded by links in $S^3$, motivated from work of Conant,
Schneiderman and Teichner~\cite{Conant-Schneiderman-Teichner:2011-1,
  Conant-Schneiderman-Teichner:2012-2,
  Conant-Schneiderman-Teichner:2012-3,
  Conant-Schneiderman-Teichner:2012-4,
  Conant-Schneiderman-Teichner:2012-1}.  Whitney towers come in two
flavors: \emph{framed} and \emph{twisted}.  Whitney towers we consider
have an \emph{order}, which is a nonnegative integer measuring the
number of iterated stages.  Precise definitions can be found in
Section~\ref{section:whitney-towers}.

The main result of this paper is a complete classification of links in
$S^3$ modulo Whitney towers in \emph{rational homology 4-balls}.  To
state our result, we use the following notation.  Fix $m>0$, and let
$\ol{\W}_n^\tw$ be the set of $m$-component links in $S^3$ bounding a
twisted Whitney tower of order $n$ in a rational homology 4-ball with
boundary~$S^3$.  We define the graded quotient $\ol{\sW}_n^\tw$ of
$\ol{\W}_n^\tw$ by the condition that $L$ and $L'$ in $\ol{\W}_n^\tw$
represent the same element in $\ol{\sW}_n^\tw$ if and only if a band
sum of $L$ and $-L'$ lies in~$\ol\W^\tw_{n+1}$.  In fact, in
Section~\ref{subsection:twisted-graded-quotients}, we will show that
it is an equivalence relation, and $L\in \ol\W^\tw_{n+1}$ if and only
if $[L]=0$ in~$\ol\sW^\tw_n$.  So we may write
$\ol\sW^\tw_n = \ol\W^\tw_n/\ol\W^\tw_{n+1}$.


\begin{theoremalpha}
  \label{theorem:classification-intro}
  \leavevmode\Nopagebreak
  \begin{enumerate}
  \item Band sum is a well-defined operation on the set
    $\ol\sW^\tw_{n}$, independent of the choice of bands, and
    $\ol\sW^\tw_{n}$ is an abelian group under band sum.
  \item $\ol{\sW}^\tw_{n}$ is classified by the Milnor invariants of
    order $n$ \textup{(}$=$ length $n+2$\textup{)}.
  \item $\ol{\sW}^\tw_{n}$ is a free abelian group of rank
    $m \cR(m,n+1)-\cR(m,n+2)$, where
    $\cR(m,n) = \frac 1n \sum_{d\mid n} \phi(d)\cdot m^{n/d}$ and
    $\phi(d)$ is the M\"obius function.
  \end{enumerate}
\end{theoremalpha}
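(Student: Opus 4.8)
The plan is to reduce Theorem~\ref{theorem:classification-intro} to the construction of a single isomorphism $\bar\mu_n\colon \ol\sW^\tw_n \to D_m(n)$, where $D_m(n)=\Ker\bigl(\Z^m\otimes\mathcal L_{n+1}\to\mathcal L_{n+2}\bigr)$ is the classical group of length $n+2$ Milnor invariants, $\mathcal L_k$ is the degree $k$ part of the free Lie ring on $m$ generators, and the map is the bracketing $X_i\otimes\ell\mapsto[X_i,\ell]$ (with $\Z^m=\mathcal L_1$). Granting $\bar\mu_n$, part~(3) is formal: $\mathcal L_k$ is free abelian of rank $\cR(m,k)$ by the Witt formula, the bracketing $\Z^m\otimes\mathcal L_{n+1}\to\mathcal L_{n+2}$ is onto since $\mathcal L_{n+2}=[\mathcal L_1,\mathcal L_{n+1}]$, so $0\to D_m(n)\to\Z^m\otimes\mathcal L_{n+1}\to\mathcal L_{n+2}\to 0$ is exact and $D_m(n)$ is free abelian of rank $m\,\cR(m,n+1)-\cR(m,n+2)$. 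Part~(1) is then also formal: $\ol\sW^\tw_n$ is a well-defined set and $[L]=0\iff L\in\ol\W^\tw_{n+1}$ by Section~\ref{subsection:twisted-graded-quotients}, and well-definedness of band sum on $\ol\sW^\tw_n$, its independence of the chosen bands, and the abelian group axioms all follow from injectivity of $\bar\mu_n$ together with additivity of first nonvanishing Milnor invariants under band sum (so that band sum is carried to the addition of $D_m(n)$).

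First I would build the homomorphism $\bar\mu_n$. Using the ``easy'' half of the rational filtration---that $L\in\ol\W^\tw_n$ forces the Milnor invariants of $L$ of length $\le n+1$ to vanish, which I would prove by a Dwyer-type comparison of the nilpotent $\Z$- and $\Q$-homology of the link exterior with that of the exterior of the Whitney tower---the length $n+2$ Milnor numbers of $L$ are honest integers satisfying Milnor's cyclic symmetry and shuffle relations, and the standard repackaging gives $\bar\mu_n(L)\in D_m(n)$. Additivity under band sum makes this a homomorphism on $\ol\W^\tw_n$, and since an order $n+1$ twisted Whitney tower kills all length $\le n+2$ Milnor invariants it descends to $\ol\sW^\tw_n$. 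Surjectivity of $\bar\mu_n$ is a realization statement: each generator of $D_m(n)$ dual to a rooted trivalent tree is realized by an explicit iterated Bing double / tree-clasper link bounding an order $n$ framed---hence twisted---Whitney tower whose order $n$ intersection tree is the prescribed tree, and additivity then realizes all of $D_m(n)$; this is the link-building half of Cochran--Orr--Teichner and of Conant--Schneiderman--Teichner, and it goes through over $\Q$ unchanged.

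The substance of the theorem is injectivity of $\bar\mu_n$ on $\ol\sW^\tw_n$, i.e.\ that $\bar\mu_n(L)=0$ for $L\in\ol\W^\tw_n$ implies $L\in\ol\W^\tw_{n+1}$. I would run the Conant--Schneiderman--Teichner obstruction machinery over $\Q$: an order $n$ twisted Whitney tower $\sW$ for $L$ in a rational homology $4$-ball carries an order $n$ intersection invariant $\tau^{\tw}_n(\sW)$ in the twisted tree group $\cT^{\tw}_n$ (decorated trivalent trees modulo IHX, antisymmetry, the boundary-twist and framing relations, and the relations involving $\infty$-decorated trees); $\sW$ can be raised to order $n+1$ after modifications supported inside rational homology balls exactly when $\tau^{\tw}_n(\sW)=0$; and under the canonical surjection $\cT^{\tw}_n\to D_m(n)$ one has $\tau^{\tw}_n(\sW)\mapsto\bar\mu_n(L)$. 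So injectivity comes down to showing that the kernel of $\cT^{\tw}_n\to D_m(n)$---which Conant--Schneiderman--Teichner identify as generated by the ``$\infty$-trees''---dies for the rational filtration: every link realizing an $\infty$-tree must be shown to bound an order $n+1$ twisted Whitney tower in some rational homology ball.

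That last point is the main obstacle, and it is precisely the phenomenon advertised in the abstract: over $B^4$ the obstruction to killing an $\infty$-tree is the higher order Arf invariant of Conant--Schneiderman--Teichner, conjecturally nontrivial, whereas over rational homology balls it must be shown to vanish. I expect to carry this out by an explicit \emph{untwisting in a rational homology ball}: excise a regular neighborhood of a $\pm1$-twisted Whitney disk and glue back a rational homology ball in which the offending self-plumbing is trivialized (equivalently, realize the relevant Arf-type link as bounding the appropriate order twisted Whitney tower in a rational homology ball by a surgery or satellite/infection construction), converting the twisted disk into a framed one without changing the ambient boundary $S^3$. Once this is in place, $\tau^{\tw}_n$ takes values in $D_m(n)$, the inverse of $\bar\mu_n$ is realized geometrically by $\tau^{\tw}_n$, and Theorem~\ref{theorem:classification-intro} follows.
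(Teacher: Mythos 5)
Your high-level skeleton agrees with the paper's: reduce the theorem to showing that the total Milnor invariant of order $n$ gives an isomorphism $\ol\sW^\tw_n\to\sD_n$, establish the ``easy half'' (vanishing of lower-order Milnor invariants for $L\in\ol\W^\tw_n$) by a Stallings--Dwyer comparison of nilpotent quotients using $\Q$-homology of the tower exterior, and observe that the substance is injectivity, i.e.\ that the kernel $\sB_{4k-2}$ of $\cT^\tw_n\to\sD_n$ is annihilated rationally because the figure-eight-type obstruction vanishes in rational homology balls. This matches the paper's Theorems~\ref{theorem:whitney-tower-milnor-invariant}, \ref{theorem:milnor-theorem-rational-tower}, \ref{theorem:rational-triviality-characterization} and \ref{theorem:computation-of-twisted-graded-quotient}.

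However, your injectivity argument has a genuine gap. You invoke a ``rational order-raising theorem'': that a twisted Whitney tower $T$ of order $n$ for $L$ in a rational homology 4-ball ``can be raised to order $n+1$ after modifications supported inside rational homology balls exactly when $\tau^\tw_n(T)=0$.'' No such theorem is available. The Conant--Schneiderman--Teichner order-raising theorem (Theorem~\ref{theorem:cst-order-rasing}) is a statement about Whitney towers in the simply-connected $D^4$; its proof uses geometric constructions (IHX trees, intersection-cancelling moves) whose validity in a manifold with nontrivial fundamental group is not established, and the paper explicitly flags this as ``a very interesting problem, whose solution seems far from being straightforward'' and deliberately \emph{does not} prove it. Similarly, your proposed ``excise a regular neighborhood of a $\pm1$-twisted Whitney disk and glue back a rational homology ball in which the offending self-plumbing is trivialized'' is not a construction that is worked out, and it is not clear it can be made to preserve the Whitney-tower structure and the ambient $\partial=S^3$ simultaneously.

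The paper sidesteps both obstacles by a band-sum trick that keeps all order-raising in $D^4$. It first shows (Lemma~\ref{lemma:kernel-realization-by-rationally-slice-links}) that every $\theta$ in the kernel $\sB_{4k-2}\cong\Z_2\otimes\sL_k$ is realized by a $\Z[\tfrac12]$-slice link $L(\theta)$: this is deduced from the known fact that the figure eight knot bounds a slice disk in a $\Z[\tfrac12]$-homology 4-ball, together with the CST description of realization links as iterated Bing doubles and internal band sums of figure eights (both operations preserve the $\Z[\tfrac12]$-slice property). Then, given $L$ with $\mu_n(L)=0$ and an order $n$ tower $T\subset D^4$, one takes $\theta=-\tau^\tw_n(T)\in\sB_n$, forms the band sum $L\csumover\beta L(\theta)$ whose $\tau^\tw_n$-obstruction vanishes, applies CST order-raising \emph{in $D^4$} to get an order $n+1$ twisted Whitney tower, and finally caps off with the $\Z[\tfrac12]$-slice disks of $-L(\theta)$ to produce an order $n+1$ tower for $L$ in a $\Z[\tfrac12]$-homology 4-ball. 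No rational order-raising theorem is needed, and no local surgery on twisted Whitney disks is performed. You should replace the speculative untwisting step with this concrete mechanism (or establish a rational order-raising theorem, which would be a substantial new result in its own right).
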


We remark that $\cR(m,n)$ is the rank of the degree $n$ part of the
free Lie algebra on $m$ variables, due to Witt (e.g.,
see~\cite[Section~5.6]{Magnus-Karrass-Solitar:1966-1}), and
$m\cR(m,n+1)-\cR(m,n+2)$ is the number of linearly independent Milnor
invariants of order $n$, due to Orr~\cite{Orr:1989-1}.  The proof of
Theorem~\ref{theorem:classification-intro} is given in
Section~\ref{subsection:twisted-graded-quotients}.  Especially see
Theorem~\ref{theorem:computation-of-twisted-graded-quotient}.

We also present a complete classification of links modulo
\emph{framed} Whitney towers.  Briefly speaking, we define the framed
analog $\ol\W_n$ and its graded quotient $\ol\sW_n$ along the same
lines using framed Whitney towers in rational homology 4-balls instead
of twisted Whitney towers, so that $L\in \ol\W_{n+1}$ if and only if
$[L]=0$ in~$\ol\sW_n$.  We prove that $\ol{\sW}_n$ is an abelian
group, and $\ol{\sW}_n$ is completely classified by the Milnor
invariants and the higher order Sato-Levine invariants introduced
in~\cite{Conant-Schneiderman-Teichner:2012-2}.  It turns out that
$\ol{\sW}_n$ is isomorphic to the direct sum of a certain determined
number of copies of $\Z$ and~$\Z_2$.  Details are given in
Section~\ref{section:framed-classification}.  In particular see
Theorem~\ref{theorem:structure-graded-quotient-framed}.  We remark
that even the proof that $\ol{\sW}_n$ is an abelian group under band
sum is not straightforward.

The above results remain true when we replace $\Q$ by any subring of
$\Q$ in which $2$ is invertible.

\begin{theoremalpha}
  \label{theorem:general-coefficients-intro}
  For any subring $R$ of $\Q$ containing $\tfrac12$, a link in $S^3$
  bounds a twisted Whitney tower of order $n$ in an $R$-homology
  4-ball if and only if the link bounds a twisted Whitney tower of
  order $n$ in a rational homology 4-ball.  The framed case analog
  holds too.
\end{theoremalpha}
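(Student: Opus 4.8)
The plan is to deduce Theorem~\ref{theorem:general-coefficients-intro} from the proof of Theorem~\ref{theorem:classification-intro} (and, in the framed case, of Theorem~\ref{theorem:structure-graded-quotient-framed}), exploiting the fact that its two halves behave very differently with respect to the coefficient ring. One half is obstruction-theoretic: a link bounding a twisted Whitney tower of order~$n$ in a rational homology 4-ball has vanishing Milnor invariants of order~$<n$ --- equivalently, by the filtration relation and Theorem~\ref{theorem:classification-intro}(2), vanishing class in $\ol\sW^\tw_k$ for every $k<n$. This half sees the ambient 4-manifold only through its rational homology and the associated tree-valued intersection data, so it applies verbatim when a rational homology 4-ball is replaced by an $R$-homology 4-ball for any subring $R\subseteq\Q$. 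Together with the trivial observation that an $R$-homology 4-ball is in particular a rational homology 4-ball (since $\Q$ is flat over~$R$), this gives the ``only if'' direction and reduces the ``if'' direction to a realization statement with $R$-coefficients.

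The crux is then to check that the realization half of the proof --- which, given a link whose Milnor invariants of order~$<n$ vanish, produces an ambient 4-manifold together with a twisted Whitney tower of order~$n$ bounded by the link --- in fact produces $\Z[\tfrac12]$-homology 4-balls. Since a $\Z[\tfrac12]$-homology 4-ball is automatically an $R$-homology 4-ball whenever $\tfrac12\in R\subseteq\Q$, it is enough to treat $R=\Z[\tfrac12]$, which is the smallest admissible ring. So I would rerun the realization arguments of Section~\ref{subsection:twisted-graded-quotients} and Section~\ref{section:framed-classification} with $\Z[\tfrac12]$ coefficients in place of~$\Q$: these 4-manifolds are assembled from $S^3\times[0,1]$ by attaching handles and surgering framed circles and surfaces so as to build the successive stages of Whitney disks while killing the resulting homology, and one verifies that every denominator so introduced is a power of~$2$, i.e.\ that each such manifold has $H_1$ a finite $2$-group and trivial $\Z[\tfrac12]$-homology in positive degrees. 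In particular one pinpoints the places where the hypothesis ``$2$ is invertible'' enters --- the framing and surgery choices accompanying twisted Whitney disks, and, in the framed case, the explicit models realizing the $\Z_2$-valued higher-order Sato--Levine invariants --- and confirms that no prime other than~$2$ is ever inverted.

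Granting this, the theorem follows by chaining implications. If $L$ bounds a twisted Whitney tower of order~$n$ in some rational homology 4-ball, then by the obstruction half the Milnor invariants of~$L$ of order~$<n$ vanish; by the sharpened realization half $L$ then bounds a twisted Whitney tower of order~$n$ in a $\Z[\tfrac12]$-homology 4-ball, hence in an $R$-homology 4-ball for every admissible~$R$; and the reverse implication is the trivial remark above. The framed analog is identical, with Milnor and higher-order Sato--Levine invariants and Theorem~\ref{theorem:structure-graded-quotient-framed} in place of Theorem~\ref{theorem:classification-intro}. The main obstacle is precisely the bookkeeping of the second paragraph: one must ensure that nowhere in the rather intricate realization constructions --- the grope- and Whitney-tower-to-4-manifold translations, the handle cancellations keeping the ambient space a homology ball, and the explicit realizations of the individual Milnor and Sato--Levine invariants --- does a prime $\neq 2$ slip into a denominator. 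Once the constructions are carried out over $\Z[\tfrac12]$ from the outset this is routine, but it does require revisiting each of them with that ring, rather than~$\Q$, in mind.
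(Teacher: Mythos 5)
Your reduction to $R=\Z[\tfrac12]$ is correct and matches the paper's structure: the ``only if'' direction is trivial since an $R$-homology 4-ball is a rational one, and the ``if'' direction reduces, via the vanishing of the obstruction (Milnor and, in the framed case, higher-order Sato--Levine invariants), to realizing such a link by a twisted/framed Whitney tower in a $\Z[\tfrac12]$-homology 4-ball. But the critical realization step is where your argument stops being a proof. You propose to ``rerun the realization arguments with $\Z[\tfrac12]$ coefficients'' and ``verify that every denominator so introduced is a power of $2$,'' but you do not say what the construction is or why that verification should succeed, and your pinpointing of where $2$ enters --- ``the framing and surgery choices accompanying twisted Whitney disks'' and ``the explicit models realizing the $\Z_2$-valued higher-order Sato--Levine invariants'' --- is wrong. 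Twisted Whitney disks and the Sato--Levine realizations do not introduce denominators at all; the CST realization maps produce towers in the honest $D^4$.

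The single source of $\tfrac12$ in the paper is much more specific: the figure eight knot bounds a slice disk in a $\Z[\tfrac12]$-homology 4-ball (from~\cite{Cha:2003-1}), and the links realizing the kernel $\sB_{4k-2}\cong\Z_2\otimes\sL_k$ of $\eta_{4k-2}$ --- that is, the higher-order Arf classes --- are built from the figure eight knot by Bing doubling and internal band sums, so they inherit $\Z[\tfrac12]$-sliceness (Lemma~\ref{lemma:kernel-realization-by-rationally-slice-links}). The $\Z[\tfrac12]$-homology 4-ball is then obtained by gluing a genuine $D^4$ (containing an order-$(n+1)$ Whitney tower for $L\csum L(\theta)$, produced by the order-raising theorem) to the $\Z[\tfrac12]$-homology 4-ball slice complement of $L(\theta)$; no coefficient-chasing of a handle construction is involved. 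Note also that one cannot hope to convert an arbitrary rational homology 4-ball into a $\Z[\tfrac12]$-homology 4-ball by surgery while preserving the tower --- $|H_1|$ of a rational homology 4-ball bounded by $S^3$ need not be a power of $2$ --- so your argument genuinely requires this one concrete geometric input rather than the ``routine bookkeeping'' you defer. The framed case is reduced to the twisted case via $\partial^\tw$ (Lemma~\ref{lemma:kernel-realization-by-rationally-slice-links-framed}), again tracing back to the same figure eight knot fact rather than to the Sato--Levine models.
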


We prove the twisted case of
Theorem~\ref{theorem:general-coefficients-intro} in
Section~\ref{subsection:rational-whitney-filtration}.  In particular
see Theorem~\ref{theorem:rational-triviality-characterization}.  For
the framed case, see
Theorem~\ref{theorem:rational-triviality-characterization-framed} in
Section~\ref{subsection:rational-whitney-filtration-framed}.

\subsubsection*{Milnor invariants and rational Whitney towers}

The problem of understanding the Milnor invariants geometrically has
been addressed by numerous authors.  Especially Igusa and Orr proved
the \emph{$k$-slice conjecture}, which asserts that a link $L$ has
vanishing Milnor invariants of length $\le 2k$ if and only if $L$
bounds disjoint surfaces in $D^4$ such that each loop on these
surfaces can be pushed off to a loop lying in the $k$th lower central
subgroup of the fundamental group of the complement of the
surfaces~\cite{Igusa-Orr:2001-1}.  A significantly strengthened
version of the Igusa-Orr theorem was given in
\cite[Theorem~18]{Conant-Schneiderman-Teichner:2012-3} by Conant,
Schneiderman and Teichner.

As a consequence of our main result, we present a geometric
characterization of the vanishing of the Milnor invariants in terms of
Whitney towers.

\begin{theoremalpha}
  \label{theorem:geometric-characterization-milnor-intro}
  A link $L$ in $S^3$ has vanishing Milnor invariants of order $\le n$
  \textup{(}or equivalently length $\le n+2$\textup{)} if and only if
  $L$ bounds a twisted Whitney tower of order $n+1$ in a rational
  homology 4-ball.
\end{theoremalpha}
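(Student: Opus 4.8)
The plan is to deduce this from the classification theorem, Theorem~\ref{theorem:classification-intro}, together with the fact recorded in Section~\ref{subsection:twisted-graded-quotients} that $L\in\ol\W^\tw_{n+1}$ if and only if $[L]=0$ in~$\ol\sW^\tw_n$. Since ``$L$ bounds a twisted Whitney tower of order $n+1$ in a rational homology 4-ball'' is by definition the assertion $L\in\ol\W^\tw_{n+1}$, it suffices to prove, for every integer $k\ge 0$ and every $m$-component link $L$ in $S^3$, the equivalence
\[
  L\in\ol\W^\tw_k \iff \mu_j(L)=0 \text{ for all } j<k,
\]
where $\mu_j$ denotes the collection of Milnor invariants of order $j$ (length $j+2$); the theorem is then the special case $k=n+1$.

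I would prove this equivalence by induction on $k$. For $k=0$ there is nothing to check: every link bounds an order~$0$ twisted Whitney tower (for instance immersed disks in $D^4$), so $\ol\W^\tw_0$ is the set of all $m$-component links, while the right-hand condition is vacuous. For the inductive step, assume the equivalence for $k$. Then every link in $\ol\W^\tw_k$ has vanishing Milnor invariants of order $<k$, so the order~$k$ Milnor invariants are defined without indeterminacy on $\ol\W^\tw_k$, are additive under band sum, and vanish on $\ol\W^\tw_{k+1}$; hence they descend to a homomorphism on $\ol\sW^\tw_k$. By Theorem~\ref{theorem:classification-intro}(2) (see also Theorem~\ref{theorem:computation-of-twisted-graded-quotient}), this induced homomorphism is the classifying invariant of $\ol\sW^\tw_k$ and is therefore injective, so for $L\in\ol\W^\tw_k$ we have $[L]=0$ in $\ol\sW^\tw_k$ if and only if $\mu_k(L)=0$. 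Combining this with $L\in\ol\W^\tw_{k+1}\iff[L]=0$ in $\ol\sW^\tw_k$ and with the inductive hypothesis gives
\[
  L\in\ol\W^\tw_{k+1} \iff L\in\ol\W^\tw_k \text{ and } \mu_k(L)=0 \iff \mu_j(L)=0 \text{ for all } j<k+1,
\]
which closes the induction.

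Essentially all of the mathematical content of this theorem is carried by Theorem~\ref{theorem:classification-intro}; the argument above is bookkeeping. The one point I expect to require care is the bootstrap inside the inductive step: one must know both that membership in $\ol\W^\tw_k$ already forces the Milnor invariants of order $<k$ to vanish (so that the order~$k$ invariants are canonically defined on $\ol\W^\tw_k$), and that, under that hypothesis, the classification of $\ol\sW^\tw_k$ in Theorem~\ref{theorem:classification-intro}(2) is realized concretely by the honest Milnor-invariant homomorphism $L\mapsto\mu_k(L)$ rather than by some abstractly isomorphic invariant. Both are built into the construction and proof of Theorem~\ref{theorem:classification-intro}, so in the write-up I would simply invoke them here and would not need any further argument.
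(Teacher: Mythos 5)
Your argument is circular relative to the logical architecture of the paper. You derive the statement from Theorem~\ref{theorem:classification-intro}~(2) (injectivity of $\mu_n\colon\ol\sW^\tw_n\to\sD_n$) together with the fact that $L\in\ol\W^\tw_{n+1}$ iff $[L]=0$ in $\ol\sW^\tw_n$. But both of those ingredients are, in the paper, \emph{consequences} of Theorem~\ref{theorem:rational-triviality-characterization}, whose equivalence (1)$\iff$(2) is precisely the statement being proved. Concretely: the proof of Theorem~\ref{theorem:computation-of-twisted-graded-quotient} begins with the claim ``$L\sim L'$ iff $\mu_n(L)=\mu_n(L')$,'' and the ``if'' direction is established by applying Theorem~\ref{theorem:rational-triviality-characterization} to the band sum $L\csumover{\beta}-L'$. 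Likewise, that the band-sum relation in the introduction's definition of $\ol\sW^\tw_n$ is an equivalence relation, that band sum is well-defined and gives a group structure, and that $[L]=0$ iff $L\in\ol\W^\tw_{n+1}$ (Corollary~\ref{corollary:whitney-concordance-vs-band-sum}) are all obtained downstream of Theorem~\ref{theorem:rational-triviality-characterization}. So you have reproduced a theorem from one of its own corollaries, not given an independent proof. Your remark that ``essentially all of the mathematical content of this theorem is carried by Theorem~\ref{theorem:classification-intro}'' has the dependency exactly backwards.

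The substance of the nontrivial direction ($\mu_k(L)=0$ for $k\le n$ implies $L\in\ol\W^\tw_{n+1}$) is the content of Lemma~\ref{lemma:rational-vs-integral-towers-for-link}, which in turn rests on Lemma~\ref{lemma:kernel-realization-by-rationally-slice-links}: when $\mu_k(L)=0$, the Conant--Schneiderman--Teichner obstruction $\tau^\tw_k(T)$ of a bounding tower $T\subset D^4$ lies in $\sB_k=\Ker\eta_k$; by Lemma~\ref{lemma:kernel-realization-by-rationally-slice-links} the negative of this class is realized by a $\Z[\frac12]$-slice link $L(\theta)$; band-summing $L$ with $L(\theta)$ cancels the obstruction, so the order-raising theorem (Theorem~\ref{theorem:cst-order-rasing}) produces a higher-order tower in $D^4$; capping off $L(\theta)$ by its slice disks in a $\Z[\frac12]$-homology ball then yields a tower for $L$ alone in a rational homology ball, and one iterates this from order $0$ up to order $n$ as in the proof of (2)$\Rightarrow$(4) of Theorem~\ref{theorem:rational-triviality-characterization}. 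That geometric mechanism -- the kernel of $\eta_n$ being killed by rational slicing, generalizing the fact that the figure-eight knot is rationally slice -- is the actual content of the theorem, and none of it appears in your proposal. Your inductive scaffolding is sound bookkeeping and could serve as an alternative presentation if all the graded-quotient facts about $\ol\sW^\tw_k$ were established first without appeal to this theorem, but as written it assumes what is to be proved.
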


We remark that $L$ bounds a twisted Whitney tower of order $n+1$ in a
rational homology 4-ball if and only if $L$ bounds a twisted capped
grope of class $n+2$ in a rational homology 4-ball, due to
\cite[Theorem~5]{Schneiderman:2006-1}
and~\cite[Lemma~23]{Conant-Schneiderman-Teichner:2012-3}.  We prove
Theorem~\ref{theorem:geometric-characterization-milnor-intro} in
Section~\ref{subsection:rational-whitney-filtration} as a part of
Theorem~\ref{theorem:rational-triviality-characterization}.

\subsubsection*{Higher order Arf invariants and rational Whitney towers}

In their study of link concordance via Whitney towers in~$D^4$,
Conant, Schneiderman and Teichner introduced the \emph{higher order
  Arf invariant}~$\Arf_k$ ($k\ge 1$).  Together with the Milnor
invariants, $\Arf_k$ forms a complete set of invariants used to
present classifications of links modulo twisted Whitney towers
in~$D^4$.  Understanding the higher order Arf invariants, which remain
mysterious yet, is the most significant open problem in the study of
finite asymmetric Whitney towers.  In particular the \emph{higher
  order Arf invariant conjecture} asserts that $\Arf_k$ are
nontrivial~\cite[Conjecture~1.17]{Conant-Schneiderman-Teichner:2012-2}.

Our main result provides a geometric interpretation of the
\mbox{(non-)vanishing} of the higher order Arf invariants.  Briefly,
the higher order Arf invariants measure the difference between a
bounding Whitney tower in the standard 4-ball and one in a rational
homology 4-ball.


\begin{theoremalpha}
  \label{theorem:higher-order-arf-rational-tower-intro}
  For each $n\ge 0$, the following statements are equivalent.
  \Nopagebreak
  \begin{enumerate}
  \item $\Arf_k \equiv 0$ for $4k-2\le n$.
  \item A link $L\subset S^3$ bounds a twisted Whitney tower of order
    $n+1$ in $D^4$ if and only if $L$ bounds a twisted Whitney tower
    of order $n+1$ in a rational homology 4-ball.
  \end{enumerate}
\end{theoremalpha}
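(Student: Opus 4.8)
The plan is to exploit the two classifications that the paper has already set up—the filtration $\ol\W^\tw_\bullet$ in rational homology 4-balls (Theorem~\ref{theorem:classification-intro}, with graded pieces detected by Milnor invariants) and the analogous $D^4$-filtration of Conant--Schneiderman--Teichner, whose graded pieces are detected by Milnor invariants together with the $\Arf_k$—and to compare them order by order. Both filtrations satisfy $\ol\W^\tw_{n+1}\supseteq \W^\tw_{n+1}(D^4)$ trivially, since a Whitney tower in $D^4$ is in particular one in a rational homology 4-ball; statement~(2) asserts the reverse inclusion for all orders $\le n+1$, i.e.\ that the two filtrations agree through order $n+1$. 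The natural approach is an induction on the order: assuming $\ol\W^\tw_{j}=\W^\tw_{j}(D^4)$ for $j\le n$, the two graded quotients $\ol\sW^\tw_n$ and $\sW^\tw_n(D^4)$ receive maps from the same set, and the question of whether $\ol\W^\tw_{n+1}=\W^\tw_{n+1}(D^4)$ becomes the question of whether the surjection $\sW^\tw_n(D^4)\twoheadrightarrow \ol\sW^\tw_n$ is an isomorphism. By Theorem~\ref{theorem:classification-intro}(2) the target is the Milnor-invariant quotient, while the kernel of this surjection is exactly the subgroup of $\sW^\tw_n(D^4)$ generated by the image of $\Arf_k$ for the relevant~$k$.

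The key computation is therefore to identify precisely which $\Arf_k$ can contribute a nonzero class in $\sW^\tw_n(D^4)$, i.e.\ to pin down the order in which each higher-order Arf invariant lives. From the Conant--Schneiderman--Teichner theory, $\Arf_k$ obstructs raising the order of a twisted Whitney tower from $2^k-1$ to $2^k$ in the framed setting, but in the \emph{twisted} filtration the relevant order is shifted; one must carefully track the indexing so as to establish that $\Arf_k$ lives in $\sW^\tw_n(D^4)$ precisely when $4k-2\le n$. Granting this, the direction (1)$\Rightarrow$(2) is immediate: if $\Arf_k\equiv 0$ for all $k$ with $4k-2\le n$, then the kernel of $\sW^\tw_n(D^4)\twoheadrightarrow\ol\sW^\tw_n$ vanishes at every stage of the induction through order~$n$, so the two filtrations coincide through order $n+1$, which is exactly~(2). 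For (2)$\Rightarrow$(1), one argues contrapositively: if some $\Arf_k$ with $4k-2\le n$ is nontrivial, it represents a nonzero class in $\sW^\tw_n(D^4)$ with vanishing Milnor invariants, hence a link bounding a twisted Whitney tower of order $n+1$ in a rational homology 4-ball (since its class dies in $\ol\sW^\tw_n$ by Theorem~\ref{theorem:classification-intro}(2)) but not in $D^4$, contradicting~(2).

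A subtlety worth isolating is that ``$\Arf_k\equiv 0$'' should be interpreted as vanishing of the $\Arf_k$ obstruction on the whole relevant group, not merely on one link, so that the inductive step is genuinely reversible; this is why the equivalence is stated for a fixed $n$ with a range of $k$ rather than link-by-link. Another point requiring care is that the comparison of the two filtrations is not literally a comparison of the abelian groups $\ol\sW^\tw_n$ and $\sW^\tw_n(D^4)$ in the abstract, but of the two equivalence relations on $m$-component links induced by ``band sum with $-L'$ lies in the next term''; one must check that the natural map between graded quotients is induced by the identity on links and is compatible with band sum, so that ``isomorphism of graded quotients'' really does translate back into ``equality of filtrations.'' This bookkeeping is routine once the framework of Section~\ref{subsection:twisted-graded-quotients} is in hand.

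I expect the main obstacle to be the precise identification of the order at which $\Arf_k$ lives in the twisted rational filtration—equivalently, verifying the numerical coincidence ``$4k-2\le n$''—since this requires matching the CST indexing conventions for twisted versus framed towers against the order conventions used in this paper, and getting the off-by-a-shift factors exactly right. Everything else reduces to assembling Theorem~\ref{theorem:classification-intro}, the CST classification in $D^4$, and the elementary inclusion of filtrations.
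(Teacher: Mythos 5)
Your high-level strategy---compare the two filtrations order by order, identifying the kernel of $\sW^\tw_j\to\ol\sW^\tw_j$ with (the domain of) the higher order Arf invariants---is the right one and essentially matches the paper's route through Theorem~\ref{theorem:computation-of-twisted-graded-quotient}(4) and Corollary~\ref{corollary:higher-order-arf-graded-quotient}. But your indexing is incorrect at a factual level: you state that ``$\Arf_k$ obstructs raising the order of a twisted Whitney tower from $2^k-1$ to $2^k$ in the framed setting,'' which is wrong on both counts. $\Arf_k$ is defined on $\sK^\tw_{4k-2}=\Ker\{\mu_{4k-2}\colon\sW^\tw_{4k-2}\to\sD_{4k-2}\}$, so it lives at the single order $4k-2$ of the \emph{twisted} filtration, and $\sK^\tw_j=0$ for $j\not\equiv 2\bmod 4$. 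The hypothesis ``$\Arf_k\equiv 0$ for $4k-2\le n$'' does not mean each $\Arf_k$ ``lives in $\sW^\tw_n$ whenever $4k-2\le n$''; it accumulates because the induction over orders $j\le n$ meets one condition at each $j$ of the form~$4k-2$.

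More seriously, your (2)$\Rightarrow$(1) direction has a genuine gap. Given a nontrivial $\Arf_k$ with $4k-2\le n$, you take a link $L$ with $0\ne[L]\in\sK^\tw_{4k-2}\subset\sW^\tw_{4k-2}$ and assert it bounds a twisted Whitney tower of order $n+1$ in a rational homology $4$-ball, ``since its class dies in $\ol\sW^\tw_n$.'' But $[L]\in\sK^\tw_{4k-2}$ only gives $\mu_j(L)=0$ for $j\le 4k-2$; to have $L\in\ol\W^\tw_{n+1}$, Theorem~\ref{theorem:rational-triviality-characterization} requires $\mu_j(L)=0$ for \emph{all} $j\le n$, and you have no control over $\mu_j(L)$ for $4k-2<j\le n$ from the hypothesis alone. (Nor is $L$ even guaranteed to lie in $\W^\tw_n$, so ``a nonzero class in $\sW^\tw_n$'' is not justified either.) The paper plugs exactly this hole with Lemma~\ref{lemma:kernel-realization-by-rationally-slice-links}: every class in $\sK^\tw_{4k-2}$ is realized by a $\Z[\tfrac12]$-slice link $L(\theta)$ built from the figure-eight knot by Bing doubling and internal band sums, and since $L(\theta)$ is rationally slice it lies in $\ol\W^\tw_j$ for every $j$ for free. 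That rationally slice realization is the geometric crux of the theorem, and it is absent from your outline.
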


We prove Theorem~\ref{theorem:higher-order-arf-rational-tower-intro}
at the end of Section~\ref{subsection:twisted-graded-quotients}.
Especially see
Corollary~\ref{corollary:higher-order-arf-graded-quotient}.


\subsubsection*{Some remarks on our approach}

The proofs of our main results hinge, in an essential way, on the work
of Conant, Schneiderman and Teichner on Whitney towers in $D^4$
\cite{Conant-Schneiderman-Teichner:2012-2,
  Conant-Schneiderman-Teichner:2012-3,
  Conant-Schneiderman-Teichner:2012-1,
  Conant-Schneiderman-Teichner:2012-4} which is summarized
in~\cite{Conant-Schneiderman-Teichner:2011-1}.

They formulate algebraic analogs of the geometric theory of Whitney
towers, in terms of intersection data of Whitney disks, and present
complete classifications of the algebraic side using their proof of a
conjecture of Levine~\cite{Levine:2001-1,Levine:2001-2}.  To relate
this to the geometric side, they prove a key result called the
\emph{order raising theorem}~\cite[Theorems 1.9, 2.6, 2.10
and~4.4]{Conant-Schneiderman-Teichner:2012-2}, whose origin goes back
to~\cite[Theorem~2]{Schneiderman-Teichner:2004-1}.  It essentially
says that the vanishing of algebraic intersection data is sufficient
to raise the order of a Whitney tower in~$D^4$.  This approach gives
Whitney tower concordance classifications of links, modulo
indeterminacy from a certain not-yet-understood part of the
correspondence between the algebraic and geometric sides, which the
higher order Arf invariant conjecture concerns.

A natural attempt for the study of Whitney towers in rational homology
4-balls, or more generally in general 4-manifolds, would be to develop
a non-simply-connected version of the above algebraic theory and order
rasing theorem.  This appears to be a very interesting problem, whose
solution seems far from being straightforward.

Instead, we present a different approach.  We identify exactly which
part of the Conant-Schneiderman-Teichner theory of Whitney towers in
$D^4$ is annihilated in rational homology 4-balls.  In fact we show
that the information from the Milnor invariants (and the higher order
Sato-Levine invariants in the framed odd order case) survives, while
the higher order Arf invariants are eliminated when passed to the
rational theory, as indicated in
Theorem~\ref{theorem:higher-order-arf-rational-tower-intro}\@.
Put differently, the part not yet fully understood in the integral
theory is exactly the information annihilated in the rational theory.
This leads us to rational Whitney tower classification results
\emph{without indeterminacy}, as stated in
Theorem~\ref{theorem:classification-intro} and
Theorem~\ref{theorem:structure-graded-quotient-framed}.

To show that the Milnor invariant information is preserved in the
rational theory, we first show a Milnor type theorem for Whitney
towers in a rational homology 4-ball, which computes the lower central
series quotients of the complement fundamental group.  See
Theorem~\ref{theorem:milnor-theorem-rational-tower}.  Using this and
commutator calculus on a Whitney tower, we show that Milnor invariants
(and higher order Sato-Levine invariants) are determined by a Whitney
tower in a rational homology 4-ball.  See
Theorem~\ref{theorem:whitney-tower-milnor-invariant}.  This
generalizes an earlier result
in~\cite{Conant-Schneiderman-Teichner:2012-3}.

The elimination of the higher order Arf invariants in the rational
theory generalizes an earlier result too.  Indeed, the figure eight
knot, which has nontrivial Arf invariant, is known to bound a slice
disk in a rational homology 4-ball~\cite{Cha:2003-1}, and this tells
us that the classical Arf invariant is not preserved under rational
concordance.  Our generalization to the higher order case is based on
this.  Precise formulations and proofs are given in
Lemmas~\ref{lemma:kernel-realization-by-rationally-slice-links}
and~\ref{lemma:kernel-realization-by-rationally-slice-links-framed}.

\subsubsection*{Organization of the paper}

In Section~\ref{section:whitney-towers}, we review the definitions of
Whitney towers and trees representing intersection data of Whitney
disks.  In Section~\ref{section:milnor-invariant-rational-tower}, we
investigate the relationship of the Milnor invariants of links and
bounding Whitney towers in a rational homology 4-ball.  In
Section~\ref{section:rational-theory-to-integral-theory}, we study
twisted Whitney towers in a rational homology 4-ball.  We give a
complete characterization of links bounding a twisted Whitney tower of
a given order and prove Theorem~\ref{theorem:classification-intro}\@.
Section~\ref{section:framed-classification} is devoted to the study of
framed Whitney towers in a rational homology 4-ball.

\subsubsection*{Acknowledgement}

The author thanks an anonymous referee for careful comments.  This
work was partially supported by NRF grant 2013067043.

\section{Whitney towers and associated trees}
\label{section:whitney-towers}

In this section we will review definitions of twisted and framed
asymmetric Whitney towers in 4-manifolds, and discuss uni-trivalent
trees which arise naturally in the study of iterated intersections of
surfaces, particularly for Whitney towers (e.g.,
see~\cite{Cochran:1990-1, Conant-Teichner:2004-1,
  Conant-Teichner:2004-2, Schneiderman:2006-1,
  Conant-Schneiderman-Teichner:2007-1,
  Conant-Schneiderman-Teichner:2012-2,
  Conant-Schneiderman-Teichner:2012-3}).  Readers who are familiar
with them may skip to
Section~\ref{section:milnor-invariant-rational-tower}, after reading
this paragraph.  In this paper a Whitney tower is always assumed to be
union-of-disks-like (defined below), except the case of a Whitney
tower concordance, which is union-of-annuli-like.  Manifolds and
immersed surfaces are always oriented.

\subsection{Definitions of Whitney towers}
\label{subsection:definition-whitney-tower}

In what follows, a \emph{sheet} is an open subset of an immersed
surface in a 4-manifold.

\begin{definition}[Twisted and framed Whitney disk]
  \label{definition:twisted-framed-whitney-disk}
  Suppose $X$ is a 4-manifold and $p$, $q$ are two intersections of
  opposite signs of two connected sheets $A$ and $B$ in~$X$.  A
  \emph{Whitney circle} pairing $p$ and $q$ is an embedded circle
  $\alpha$ which is the union of an arc on $A$ joining $p$ and $q$ and
  another arc on $B$ joining $p$ and~$q$.  A \emph{Whitney disk}
  pairing $p$ and $q$ is an immersed disk $D$ in $X$ bounded by a
  Whitney circle $\alpha$.  We require that there is a collar
  neighborhood of $\partial D$ in $D$ whose intersection with
  $A\cup B$ is~$\partial D$, while the complement of the collar is
  allowed to intersect the sheets.

  For an immersed disk $D$, we call the restriction of the unique
  framing of $D$ on $\partial D$ the \emph{disk framing}.  On the
  boundary of a Whitney disk $D$, the tangential direction of one of
  the involved sheets and the common normal direction of $D$ and the
  other sheet defines a framing, which we call the \emph{Whitney
    framing}.  Using $\operatorname{SO}(2)=\Z$, the disk framing with
  respect to the Whitney framing determines an integer $\omega(D)$
  called the \emph{twisting number} of~$D$.  If $\omega(D)=0$, then
  $D$ is called \emph{framed}.  When we do not require a disk to be
  framed in this sense, we call the disk \emph{twisted}.  (Technically
  a twisted Whitney disk may be framed.)
\end{definition}

\begin{definition}[Framed Whitney tower]
  \label{definition:framed-whitney-tower}
  A \emph{framed Whitney tower} in a 4-manifold $X$ is a 2-complex
  defined inductively as follows.  A union of properly immersed
  surfaces in $X$ which are transverse to each other is a \emph{framed
    Whitney tower}.
  Suppose $T$ is a framed Whitney tower and $D$ is an immersed framed
  Whitney disk in the interior of $X$ pairing two intersections of
  opposite signs between two sheets in $T$.  We allow the interior of
  $D$ to transversely intersect the interior of surfaces and disks of
  $T$, but require $D$ to be disjoint from the boundary of any surface
  or disk in~$T$.  Then $T$ with $D$ attached is a \emph{framed
    Whitney tower}.
\end{definition}

\begin{definition}[Order]
  \label{definition:order}
  The initial surfaces of a Whitney tower, namely those with boundary
  in $\partial X$, are called the \emph{order 0 surfaces}.
  Inductively, an intersection of an order $k$ sheet and an order
  $\ell$ sheet is called an \emph{order $k+\ell$ intersection}.  A
  Whitney disk pairing two order $n$ intersections is called an
  \emph{order $n+1$ disk}.  A Whitney tower $T$ is of \emph{order $n$}
  if all intersections of order $<n$ are paired up by Whitney disks
  in~$T$.  (Intersections of order $\ge n$ are allowed to be
  unpaired.)
\end{definition}

\begin{definition}[Twisted Whitney tower]
  \label{definition:twisted-whitney-tower}
  A \emph{twisted Whitney tower of order $n$} is defined exactly in
  the same way as a framed Whitney tower of order $n$, except that we
  allow Whitney disks of order $\ge \tfrac{n}{2}$ to be twisted.
  Disks of order $<\tfrac{n}{2}$ are still required to be framed.
\end{definition}

A twisted Whitney tower of order $n$ can be modified in such a way
that all Whitney disks of order $>\tfrac{n}{2}$ are framed, by a
boundary twist argument (see
\cite[Section~4.1]{Conant-Schneiderman-Teichner:2012-2}).  Using
this, we always assume that a twisted Whitney tower of order $2k-1$
is indeed framed, and assume that a twisted Whitney disk of a
twisted Whitney tower of order $2k$ has order~$k$.

Following the convention of
Freedman-Quinn~\cite{Freedman-Quinn:1990-1} used for gropes, we call a
(framed or twisted) Whitney tower \emph{union-of-disks-like}
(respectively \emph{union-of-annuli-like}) if each order zero surface
is a disk (respectively an annulus).  As mentioned at the beginning of
this section, we assume that every Whitney tower is
union-of-disks-like unless stated otherwise.

We remark that a Whitney tower can always be modified, using finger
moves, in such a way that for each Whitney disk $D$ (except the base
disks or annuli) one of the following holds: (i)~$D$ is a twisted disk
with $\omega(D)=\pm1$, (ii)~$D$ is a framed disk with exactly one
intersection point, or (iii)~$D$ is a framed disk with exactly two
intersection points and they are paired by some other Whitney
disk~\cite[Lemma~2.12]{Conant-Schneiderman-Teichner:2012-2}.  Such a
tower is called \emph{split}.  We always assume that a Whitney tower
is split, unless stated otherwise.

In this paper, links are always oriented and ordered.

\begin{definition}[Boundary of Whitney towers]
  \label{definition:link-bounding-whitney-tower}
  Suppose $X$ is a 4-manifold and $L$ is a framed link
  in~$\partial X$.  We say that $L$ \emph{bounds an order $n$ framed
    Whitney tower $T$ in $X$} if (i)~the boundary of the order zero
  disks of $T$ is equal to~$L$, and (ii)~the unique framing of the
  order zero disks restricts to the given framing of~$L$.  For $n=0$,
  we say that $L$ \emph{bounds an order $0$ twisted Whitney tower $T$
    in $X$} if (i) holds, without requiring~(ii).  For $n>0$, a framed
  link $L$ \emph{bounds an order $n$ twisted Whitney tower $T$ in $X$}
  if (i) and (ii) hold.
\end{definition}

When a framed link $L$ bounds an order 0 Whitney tower $T$, the
\emph{twisting number} $\omega(D)\in\Z=\operatorname{SO}(2)$ of an
order 0 disk of $T$ is defined to be the disk framing with respect to
the given framing of~$L$.

In Definition~\ref{definition:link-bounding-whitney-tower}, we require
the framing condition (ii) even for the twisted case when $n>0$,
because we always regard order $<\frac n2$ surfaces as framed, as we
did in Definition~\ref{definition:twisted-whitney-tower}.  The same
happens in the following definition.

\begin{definition}[Whitney tower concordance]
  \label{definition:whitney-tower-concordance}
  Suppose $X$ is a 4-manifold with
  $\partial X = \partial_+X \sqcup -\partial_-X$.  Two framed links
  $L\subset \partial_-X$ and $L'\subset \partial_+X$ with $m$
  components are \emph{order $n$ framed Whitney tower concordant in
    $X$} if there is a union-of-annuli-like framed Whitney tower $T$
  of order $n$ in $X$ such that (i)~$T$ has $m$ order zero annuli and
  the $i$th order zero annulus is cobounded by the $i$th component of
  $L'$ and that of $-L$, and (ii)~the framings of $L$ and $L'$ extend
  to the same framing of the order zero annuli.  For $n>0$, $L$ and
  $L'$ are \emph{order $n$ twisted Whitney tower concordant in $X$} if
  there is a twisted Whitney tower $T$ of order $n$ satisfying (i)
  and~(ii).  For $n=0$, $L$ and $L'$ are \emph{order $0$ twisted
    Whitney tower concordant} if there is a twisted Whitney tower $T$
  satisfying~(i).
\end{definition}

\begin{remark}[Framing of the boundary of rational Whitney towers]
  \label{remark:links-bounding-towers-in-QHD4}
  In this paper we will mainly consider the case of a Whitney tower in
  a rational homology 4-ball bounded by $S^3$ (or standard $D^4$ as a
  special case) and a Whitney tower concordance in a rational homology
  $S^3\times I$ bounded by $S^3\times 1\sqcup -S^3\times 0$.  Recall
  that the linking number of two knots in $S^3$ is equal to the
  algebraic intersection number of bounding immersed disks in a
  rational homology 4-ball bounded by~$S^3$.  The following basic
  observations are direct consequences of this fact and the above
  definitions.

  \begin{enumerate}
  \item A framed link $L\subset S^3$ bounds an order 0 framed Whitney
    tower in a rational homology 4-ball if and only if each component
    of $L$ is evenly framed, since an immersed disk in a rational
    homology 4-ball bounded by a knot $K\subset S^3$ induces an even
    framing on~$K$.  On the other hand, any framed link $L\subset S^3$
    bounds an order 0 twisted Whitney tower in~$D^4$.
  \item If a framed link $L\subset S^3$ bounds a framed/twisted
    Whitney tower of order $n\ge 1$ in a rational homology 4-ball,
    then the link is automatically zero framed and any two components
    have vanishing linking number; it follows from the fact that all
    the intersections of order zero disks are paired up by order 1
    disks.
  \item If two framed links in $S^3$ are order $n\ge 1$ Whitney tower
    concordant in a rational homology $S^3\times I$, then their
    framings are equal.
  \end{enumerate}
  We will often say, e.g., ``$L\subset S^3$ bounds an order $n\ge 1$
  twisted/framed Whitney tower in a rational homology 4-ball'' even
  when no framing on $L$ is given.  Using~(2), this is understood as
  that $L$ with the zero framing does.
\end{remark}




\subsection{Trees from intersection and twisting data}
\label{subsection:trees-intersection-twisting}

In this subsection we review a certain type of trees used
in~\cite{Conant-Schneiderman-Teichner:2012-2,
  Conant-Schneiderman-Teichner:2012-3}.  Fix an integer $m\ge 1$.  In
this paper trees will always be uni-trivalent and oriented, that is,
each vertex is either univalent or trivalent, and each trivalent
vertex is endowed with a cyclic ordering of adjacent edges.  As a
convention, in a local planar diagram of a vertex and its adjancent
edges, the edges are always ordered counterclockwise.  A tree has
\emph{order $n$} if it has $n$ trivalent vertices.  A tree is
\emph{decorated} if each univalent vertex has a label in
$\{1,\ldots,m\}$.  For a rooted tree, namely when the tree has a
distinguished univalent vertex, it is \emph{decorated} if each
non-root vertex has a label in $\{1,\ldots,m\}$.  In this paper trees
are always decorated.  For two rooted trees $t$ and $t'$, the
\emph{inner product} $\langle t,t''\rangle$ is defined by joining the
roots of $t$ and~$t'$.  The order of $\langle t,t''\rangle$ is the sum
of the orders of $t$ and~$t''$.  Sometimes (but not always) we will
label the root of a rooted tree by the symbol~$\tw$; such a tree is
called a \emph{$\tw$-tree}.

Suppose $T$ is a twisted Whitney tower of order~$n$.  Fix an
orientation of each disk in~$T$, and fix an order of the order 0
disks.  First, we associate to each disk $D$ in $T$ a rooted
tree~$t_D$ as follows.  For the $i$th order 0 disk $D$, define
$t_D = \smallItree{i}$, a rooted tree of order 0 with the non-root
vertex labeled by~$i$.  For a twisted/framed Whitney disk $D$ of order
$>0$, if $D$ pairs two intersections between two disks $D'$ and~$D''$,
define $t_D=\smallrootedYtree{t_{D'}}{t_{D''}}$, that is, the rooted
tree of order 1 with $t_{D'}$ and $t_{D''}$ attached to the leaves.
Here, $D'$ and $D''$ are chosen in such a way that if one travels
along the Whitney circle $\partial D$ near the involved negative
intersection $p$ of $D'$ and $D''$, starting from $D'$, passing
through $p$ and then entering into $D''$, then it agrees with the
orientation of $\partial D$ induced by the given orientation of~$D$.

For each unpaired intersection $p$ in the tower $T$, if
$p\in D \cap D'$, then define $t_p=\langle t_D,t_{D'}\rangle$.  (When
we want to remember where the roots of $t_D$ and $t_{D'}$ were, we
draw the edge of $t_p$ containing the original roots as %
$\vcenter{\hbox{
    \begin{tikzpicture}[
      x=1pt,y=1pt,
      over/.style={draw=white,double=black,double distance=.8pt,line width=1.2pt}
      ]
      \draw[thick] (0,0) -- +(120:5) (0,0) -- + (240:5) (0,0) --
      (10,0);
      \draw[over] (10,0) ellipse(1.5 and 2.5);
      \draw[over] (10,0)--(20,0) (20,0) -- +(60:5) (20,0) -- +(-60:5);
    \end{tikzpicture}}}$\kern.2em%
; the small enclosing circle denotes the location of~$p$.)  Note that
$t_D$ and $D$ have the same order, and therefore so do $t_p$ and~$p$.
For each intersection $p$, denote the sign of $p$ by
$\epsilon(p)=\pm1$.

For each twisted Whitney disk $D$, let $t^\tw_D$ be the tree $t_D$
with the root labeled by~$\tw$, as a $\tw$-tree.  Recall that
$\omega(D)$ is the twisting number of $D$ (see
Definition~\ref{definition:twisted-framed-whitney-disk}).
  
\begin{definition}
  \label{definition:tree-invariant-of-twisted-tower}
  For a twisted Whitney tower $T$ of order $n$, define a formal sum
  $t^\tw_n(T)$ of trees by
  \[
    t^\tw_n(T) = \sum_{p} \epsilon(p)\cdot t_p + \sum_D \omega(D)
    \cdot t^\tw_D
  \]
  where $p$ varies over the order $n$ intersections and $D$ varies
  over the twisted Whitney disks of order~$n/2$.  The second sum is
  regarded as vacuous if $n$ is odd.  Note that unpaired intersections
  of order $>n$ are ignored in~$t^\tw_n(T)$.
\end{definition}

\section{Milnor invariants and rational Whitney towers}
\label{section:milnor-invariant-rational-tower}

In this section we prove the following relationship of Milnor
invariants of links and Whitney towers in rational homology 4-balls.

\begin{theorem}
  \label{theorem:whitney-tower-milnor-invariant}
  Suppose $L$ is a framed link in $S^3$ bounding a twisted Whitney
  tower $T$ of order $n\ge 0$ in a rational homolgy 4-ball bounded
  by~$S^3$.  Then the following hold.
  \begin{enumerate}
  \item $L$ has vanishing Milnor invariants of order $< n$
    \textup{(}or equivalently length $< n+2$\textup{)}.
  \item $T$ determines the order $n$ Milnor invariant of~$L$.  In
    fact, $\mu_n(L) = \eta_n(t_n^\tw(T))$.
  \end{enumerate}
\end{theorem}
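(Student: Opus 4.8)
The plan is to reduce the statement to the known relationship between Whitney towers and Milnor invariants in the classical $D^4$ setting, by showing that passing to a rational homology $4$-ball does not affect the relevant lower central series data of the link complement.

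First I would establish a ``Milnor-type theorem'' for a link $L$ bounding a twisted Whitney tower $T$ of order $n$ in a rational homology $4$-ball $W$ (this is the statement referred to as Theorem~\ref{theorem:milnor-theorem-rational-tower} in the introduction). Concretely, let $E_L = S^3 \smallsetminus \nu L$ and let $E_T = W \smallsetminus \nu T$ be the exterior of the order-$0$ disks in $W$. One shows that the inclusion $E_L \hookrightarrow E_T$ induces isomorphisms on the lower central series quotients $\pi/\pi_q$ for all $q \le n+1$ (or the appropriate range), exactly as in the classical case; the point is that $W$ has the rational homology of $D^4$, the order-$0$ disks carry the correct framings and meridians, and the existence of the order-$1$ Whitney disks pairing all order-$0$ self- and mutual intersections kills the relevant commutators, so that $H_1(E_T)$ is free of rank $m$ with the meridians as basis and $H_2$ vanishes in the range needed to run Stallings' theorem. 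This gives $\pi_1(E_L)/(\pi_1(E_L))_q \cong F/F_q$ for $q \le n+1$ where $F$ is free of rank $m$, which is precisely the statement that the Milnor invariants $\bar\mu$ of length $< n+2$ vanish, proving part~(1). The key input is that Stallings' theorem (integral homology $\Rightarrow$ isomorphism on lower central quotients) upgrades, via the $2$-invertible coefficients or directly via the rational version plus the fact that lower central quotients of finitely generated groups with the right $H_1$ are already torsion-free in this setting, to conclude the integral isomorphism; alternatively one cites the version of Milnor's/Stallings' argument valid over any PID containing $\tfrac12$ and notes that the groups in play are torsion-free abelian.

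Second, for part~(2), I would follow the commutator-calculus computation of Milnor invariants from a Whitney tower, as done by Conant--Schneiderman--Teichner in \cite{Conant-Schneiderman-Teichner:2012-3} for the $D^4$ case, and check that every step goes through verbatim in $W$. The idea is: pushing a $0$-framed longitude $\ell_i$ of the $i$th component into $E_T$ and expressing its class in $F_{n+1}/F_{n+2} \cong \mathcal{L}_{n+1}(m)$ (the degree $n+1$ part of the free Lie algebra) in terms of the tree structure of $T$. Each order-$n$ unpaired intersection $p \in D \cap D'$ contributes the bracket associated to the tree $t_p$ with sign $\epsilon(p)$, and each twisted disk $D$ of order $n/2$ contributes $\omega(D)$ times the ``squared'' contribution encoded by $t_D^{\tw}$; summing gives exactly $\eta_n(t_n^{\tw}(T))$, where $\eta_n$ is the standard map from trees to Milnor invariants (sending a tree to the appropriate sum of iterated brackets / the alternating sum in the free Lie algebra). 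The only thing to verify is that the longitude push-off and the commutator expansions, which are local geometric arguments near the Whitney disks, do not see the global topology of $W$ versus $D^4$ — and they don't, since all the relevant intersections, Whitney circles, and disks live in a neighborhood that embeds in either ambient manifold, and the identification of lower central quotients from Step~1 is what lets us read off the answer in $\mathcal{L}_{n+1}(m)$.

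The main obstacle I anticipate is the first step: verifying that Stallings' lower central series argument really does give an isomorphism (not merely a rational isomorphism) on the nose in the rational homology $4$-ball setting. A rational homology equivalence only guarantees isomorphisms on lower central quotients after tensoring with $\Q$, so one must argue separately that no torsion is introduced — this is where one uses that the target $F/F_q$ is torsion-free and that $H_1(E_T;\Z) \to H_1(E_L;\Z)$ is an isomorphism (both are free of rank $m$ on meridians, by Remark~\ref{remark:links-bounding-towers-in-QHD4}(2) and a Mayer--Vietoris/Alexander-duality computation in $W$), together with a careful induction on $q$ showing the relevant $H_2$ obstruction vanishes in the range determined by the order of the tower. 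Once this integral lower-central-series isomorphism is in hand, parts~(1) and~(2) follow by the now-standard commutator calculus, and the passage to general subrings $R \subset \Q$ with $\tfrac12 \in R$ is immediate since all arguments only used that $2$ is invertible.
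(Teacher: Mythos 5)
Your overall plan matches the paper's: establish a Milnor--type theorem for the Whitney tower complement via a Stallings/Dwyer argument, then use commutator calculus to read off the longitude classes as the brackets $\eta_n(t^\tw_n(T))$.  Step~2 is essentially the paper's argument (the local nature of the computation near the Whitney disks is exactly why it transfers from $D^4$ to a rational homology ball), and your identification of the main obstacle in Step~1 is apt.  But your proposed resolution of that obstacle contains a real error.

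You assert that $H_1(E_T;\Z)$ is free of rank $m$ on meridians (citing Remark~\ref{remark:links-bounding-towers-in-QHD4}(2), which concerns linking numbers, and Alexander duality).  This is not true in general: a rational homology 4-ball $X$ can have nontrivial torsion in $H_1(X;\Z)$, and this torsion survives into $H_1(X\smallsetminus T;\Z)$; the Lefschetz/excision chain in the proof of Lemma~\ref{lemma:tower-exterior-homology} produces the duality isomorphism only with $\Q$ coefficients, since it needs $\tilde H_*(X)=0$.  Consequently you cannot upgrade to an integral isomorphism $\pi_1(E_L)/\pi_q\cong\pi_1(E_T)/\pi_q\cong F/F_q$ the way you describe, and in fact no such integral statement for $\pi_1(E_T)$ is needed or established in the paper.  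The paper's fix is different and cleaner: it applies the rational Stallings--Dwyer theorem to $F\to\pi_1(X\smallsetminus T)$ to obtain an isomorphism only after tensoring with $\Q$ (Theorem~\ref{theorem:milnor-theorem-rational-tower}), and it gets the integral content from Milnor's theorem applied to the \emph{link} group $\pi_1(S^3\smallsetminus L)$, which gives the integral isomorphism $\pi_{k+1}/\pi_{k+2}\cong F_{k+1}/F_{k+2}$ once the lower Milnor invariants vanish; the two are glued together in a commutative square, and the torsion-freeness of $F_{k+1}/F_{k+2}$ (which you do correctly flag) makes $F_{k+1}/F_{k+2}\hookrightarrow(F_{k+1}/F_{k+2})\otimes\Q$ injective, which is all that is required to conclude both $\mu_k(L)=0$ for $k<n$ and the formula for $\mu_n(L)$.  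So the missing idea is precisely that the integral information should be read off on the $S^3\smallsetminus L$ side via Milnor's classical theorem, not forced onto the $X\smallsetminus T$ side, where only a rational statement is available.
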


In Theorem~\ref{theorem:whitney-tower-milnor-invariant}~(2), $\mu_n$
denotes the total Milnor invariant of order $n$, and $\eta_n$ denotes
the summation map which was formulated in
\cite{Levine:2001-1,Levine:2001-2} and used extensively
in~\cite{Conant-Schneiderman-Teichner:2012-2,
  Conant-Schneiderman-Teichner:2012-3,
  Conant-Schneiderman-Teichner:2012-1}.  We will review their
definitions in Section~\ref{subsection:review-milnor-invariant}.

Theorem~\ref{theorem:whitney-tower-milnor-invariant}
generalizes~\cite[Theorem~6]{Conant-Schneiderman-Teichner:2012-2},
which states the same conclusion under a weaker hypothesis that $T$ is
\emph{in~$D^4$}.  We remark that the proof of
\cite[Theorem~6]{Conant-Schneiderman-Teichner:2012-2} first converts
the given Whitney tower to a capped grope and then works with the
resulting grope, particularly using the grope duality of Krushkal and
Teichner~\cite{Krushkal:1997-1}.  In our proof of
Theorem~\ref{theorem:whitney-tower-milnor-invariant}, we present a
Whitney tower argument inspired by the grope argument
in~\cite{Conant-Schneiderman-Teichner:2012-2}.  We wish this
alternative approach, which works for Whitney towers in $D^4$ as well,
to be a useful addition to the literature.

As a part of our proof of
Theorem~\ref{theorem:whitney-tower-milnor-invariant}, we show a Milnor
type theorem for Whitney towers in a rational homology 4-ball.  See
Theorem~\ref{theorem:milnor-theorem-rational-tower} in
Section~\ref{section:milnor-type-theorem-for-tower}.  Its analog for
capped gropes in $D^4$ appeared earlier
in~\cite[Lemma~33]{Conant-Schneiderman-Teichner:2012-3}.

\subsection{A quick review on the Milnor invariant and summation map}
\label{subsection:review-milnor-invariant}

We begin by recalling the definition of the Milnor invariant and
summation map, and setting up notations.

In the original work of Milnor~\cite{Milnor:1957-1}, the invariant is
defined modulo certain indeterminancy to handle arbitrary links, but
we will consider only the special case that it is well defined without
indeterminancy.

Denote the lower central series of a group $\pi$ by $\{\pi_{k}\}$,
which is defined inductively by $\pi_1=\pi$,
$\pi_{k+1} = [\pi,\pi_{k}]$.  In this paper, we use the convention
$[a,b]=aba^{-1}b^{-1}$.

Suppose $L$ is an $m$-component link in~$S^3$ with
$\pi=\pi_1(S^3\sm L)$.  Let $\mu_i\in \pi$ and $\lambda_i\in \pi$ be
the class of a meridian and a zero linking longitude of the $i$th
component respectively.  Let $F$ be a free group generated by
$x_1,\ldots,x_m$.  Let $F\to \pi$ be the meridian map defined by
$x_i \mapsto \mu_i$.  Suppose $n\ge 0$, and suppose $\lambda_i$ is
contained in~$\pi_{n+1}$.  (It is always the case for $n=0$.)  Then,
by Milnor~\cite[Theorem~4]{Milnor:1957-1}, $F\to \pi$ induces an
isomorphism $\pi_{n+1}/\pi_{n+2} \cong F_{n+1}/F_{n+2}$%
.  Let $w_i\in F_{n+1}/F_{n+2}$ be the image of $\lambda_i$ under the
isomorphism.  The \emph{Milnor invariant of length $n+2$} can be
defined to be the $m$-tuple $(w_1,\ldots,w_m)\in (F_{n+1}/F_{n+2})^m$.
If the Milnor invariant of length $n+2$ vanishes, then the longitudes
$\lambda_i$ lie in $\pi_{n+2}$, so that the Milnor invariants of
length $n+3$ can be defined.

Summarizing the above, the Milnor invariant of length $n+2$ is defined
(without indeterminancy) when the Milnor invariants of length
$\le n+1$ vanish, and it is the case if and only if every longitude
lies in the lower central subgroup~$\pi_{n+1}$.

From the longitude elements $w_i\in F_{n+1}/F_{n+2}$, Milnor extracted
numerical invariants denoted by $\ol\mu_L(i_1,\ldots,i_{n+1},i)$ for
$1\le i_j\le m$, via the Magnus expansion.  (This is why it is called
of \emph{length}~$n+2$.)  For our purpose, following
\cite{Conant-Schneiderman-Teichner:2012-3,
  Conant-Schneiderman-Teichner:2012-2}, it is convenient to use the
the free Lie algebra $\sL$ generated by $m$ variables
$X_1,\ldots,X_m$.  We have $\sL=\bigoplus_n \sL_n$ where $\sL_n$ is
the degree $n$ part; $\sL_n$ is equal to the quotient of the free
abelian group generated by $n$-fold brackets in $X_1,\ldots,X_m$
modulo the Jacobi relation and alternativity relation $[X,X]=0$.  In
particular, $\sL_1$ is the free abelian group generated by
$X_1,\ldots,X_m$.  It is known that the association $x_i \mapsto X_i$
gives rise to an isomorphism $F_n/F_{n+1} \to \sL_n$ which takes
commutator brackets to Lie brackets.  For instance see
\cite[Section~5.7]{Magnus-Karrass-Solitar:1966-1}.  Let $u_i$ be the
image of $w_i$ under $F_{n+1}/F_{n+2} \to \sL_{n+1}$.  The \emph{total
  Milnor invariant of order $n$} is defined by
\[
  \mu_{n}(L) = \sum_{i=1}^m X_i \otimes u_i \in \sL_1 \otimes
  \sL_{n+1}.
\]
Note that order $n$ corresponds to length~$n+2$.

Let $\sD_{n}$ be the kernel of the bracket map
$\sL_1 \otimes \sL_{n+1} \to \sL_{n+2}$ defined by
$X\otimes Y \to [X,Y]$.  Milnor's cyclic
symmetry~\cite[Theorem~5]{Milnor:1957-1} implies that
$\mu_{n}(L)\in \sD_{n}$ for any link~$L$.  Moreover, as a function of
the set of links $L$ with $\mu_{k}(L)=0$ for $k\le n-1$, $\mu_{n}$ is
surjective onto~$\sD_n$.  It is a consequence
of~\cite[Theorem~6]{Conant-Schneiderman-Teichner:2012-3} and
\cite[Theorem~1]{Levine:2001-2}.

\begin{remark}[Rank of $\sD_n$]
  \label{remark:rank-of-D_n}
  The range $\sD_n$ of $\mu_n$ is a free abelian group of known rank.
  Due to Witt (e.g., see
  \cite[Section~5.6]{Magnus-Karrass-Solitar:1966-1}), $\sL_k$ is a
  free abelian group of rank~$\cR(m,k)$, where
  $\cR(m,k)=\frac 1k \sum_{d\mid k} \phi(d)\cdot m^{k/d}$ with
  $\phi(d)$ the M\"obius function, as already given in
  Theorem~\ref{theorem:classification-intro} in the introduction.  It
  follows that $\sD_n$ is a free abelian group of rank
  \[
    \cM(m,n) := m \cR(m,n+1)-\cR(m,n+2).
  \]
  It was first shown by Orr~\cite{Orr:1989-1} that $\cM(m,n)$ is the
  number of linearly independent Milnor invariants of length $n+2$ on
  links $L$ with vanishing Milnor invariants of length $\le n+1$.
\end{remark}

\begin{remark}[Independence from meridian/longitude choices]
  \label{remark:mu-independence-of-meridian-longitude-choice}
  For any $L$ with $\mu_q(L)=0$ for $q\le n-1$, $\mu_n(L)$ is
  well-defined, independent of the choice of meridians $\mu_i$ (i.e.,
  the meridian map $F\to \pi$).  It is essentially because two
  meridians are conjugate: if a meridian map is given by
  $x_i \mapsto \mu_i$, then another meridian map is of the form
  $x_i \mapsto g_i \mu_i g_i^{-1}$, and it is straightforward to
  verify that they induce the same homomorphism
  $F_{n+1}/F_{n+2} \to \pi_{n+1}/\pi_{n+2}$, by using standard
  commutator calculus.  Also, $\mu_n(L)$ is independent of the choice
  of longitudes $\lambda_i$, since any conjugate of
  $\lambda_i\in \pi_{n+1}$ is equal to $\lambda_i$ itself
  modulo~$\pi_{n+2}$.
\end{remark}

\begin{remark}[Milnor invariant for framed links]
  \label{remark:milnor-invariant-framed-link}
  For a \emph{framed} link $L \subset S^3$, we define the Milnor
  invariant using pushoffs of components taken along the given
  framing, instead of zero linking longitudes.  Then, for $n=0$,
  $\mu_0(L)$ is equivalent to the pairwise linking numbers and framing
  of each component.  (In the unframed case $\mu_0(L)$ is equivalent
  to the pairwise linking numbers.)  In particular, $L$ with
  $\mu_0(L)=0$ is automatically zero framed.  Since we always assume
  that $\mu_q(L)=0$ for $q<n$ whenever we consider $\mu_n(L)$, it
  follows that there is no difference between framed and unframed
  cases for $\mu_n(L)$ with $n\ge 1$. 
  Our definition for framed links will be useful in describing order 0
  Whitney tower classifications of links in terms of~$\mu_0$.
\end{remark}

We finish this subsection with the definition of the summation
$\eta_n$ which appeared in the statement of
Theorem~\ref{theorem:whitney-tower-milnor-invariant}.  Recall that a
rooted tree $t$ of order $n$ decorated by $\{1,\ldots,m\}$ determines
a formal $n$-fold bracket in the variables $X_1,\ldots,X_m$, which we
denote by $B(t)$, in the standard manner:
$B(\vcenter{\hbox{\kern.2em\tikz[x=1pt,y=1pt]{\draw[thick,fill] (0,0)
      circle(.6pt)--(0:10) node[right]{$i$}}}}) = X_i$,
$B\big(\smallrootedYtree{t'}{t''}\big) = [B(t'), B(t'')]$.
We will often denote by $B(t)$ the element in $\sL_{n+1}$ represented
by the bracket~$B(t)$.  For a univalent vertex $v$ of a tree $t$, let
$t_v$ be the rooted tree obtained by deleting the decoration of $v$
and taking $v$ as the root.
  
\begin{definition}[Summation $\eta_n$]
  \label{definition:summation-map}
  For a tree $t$ of order $n$, define
  $\eta_n(t)=\sum_v X_{\ell(v)} \otimes B(t_v) \in
  \sL_1\otimes\sL_{n+1}$ where $v$ varies over all the univalent
  vertices of $t$ and $\ell(v)$ is the decoration of~$v$.  When $n$ is
  even, define $\eta_n$ for a $\tw$-tree $t^\tw$ of order~$\frac n2$
  by
  $\eta_n(t^\tw) = \frac12 \eta_n(\langle t^\tw,t^\tw\rangle) \in
  \sL_1\otimes\sL_{n+1}$.  It is straightforward to verify that
  $\eta_n(t^\tw)$ has integer coefficients.  For a formal sum of
  decorated order $n$ trees, and in addition order $\frac n2$
  $\tw$-trees $t^\tw$ when $n$ is even, define $\eta_n$ by extending
  the above linearly.
\end{definition}

\subsection{Computing meridians and Whitney circles in a Whitney tower}
\label{subsection:meridian-longitude-in-whitney-tower}

In this subsection we discuss how to compute Whitney circles and
meridians of Whitney disks in the fundamental group of a Whitney
tower complement using commutator calculus.

In what follows, the order 0 disks of a Whitney tower $T$ in a
4-manifold $X$ are always ordered.  For a formal $r$-fold bracket $B$
in $X_1,\ldots,X_m$ with $r\ge k+1$, we also denote by the same symbol
$B$ the element in $\pi_1(X\sm T)_{k+1}/\pi_1(X\sm T)_{k+2}$ obtained
by substituting a meridian of the $i$th order 0 disk for each
occurance of $X_i$ in the formal bracket~$B$.  This element is
well-defined modulo $\pi_1(X\sm T)_{k+2}$, independent of the choice
of a meridian, as in
Remark~\ref{remark:mu-independence-of-meridian-longitude-choice}.  It
is trivial in $\pi_1(X\sm T)_{k+1}/\pi_1(X\sm T)_{k+2}$ if $r>k+1$.

The following lemma says that the meridian of a Whitney disk $D$ is
essentially the commutator associated to the tree~$t_D$.

\begin{lemma}[Commutator expression of a meridian]
  \label{lemma:meridian-in-whitney-tower}
  Suppose $T$ is a twisted Whitney tower in a 4-manifold~$X$, and $D$
  is an order $k$ disk in~$T$.  Then a meridian $\mu_D$ of $D$ lies in
  $\pi_1(X\sm T)_{k+1}$ and $\mu_D=B(t_D)$ in
  $\pi_1(X\sm T)_{k+1} / \pi_1(X\sm T)_{k+2}$.
\end{lemma}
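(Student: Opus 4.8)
The plan is to induct on the order $k$ of the Whitney disk $D$. For the base case $k=0$, the disk $D$ is the $i$th order zero disk, so $t_D = \smallItree{i}$, and $B(t_D) = X_i$, which is by definition the meridian $\mu_D$; it lies in $\pi_1(X\sm T)_1 = \pi_1(X\sm T)$, so the statement is immediate. For the inductive step, suppose $D$ has order $k>0$ and pairs two intersection points $p,p'$ of opposite sign between sheets $D'$ and $D''$, where $D'$ has order $a$, $D''$ has order $b$, and $a+b = k-1$. By the inductive hypothesis, $\mu_{D'} \in \pi_1(X\sm T)_{a+1}$ with $\mu_{D'} = B(t_{D'})$ modulo $\pi_1(X\sm T)_{a+2}$, and similarly for $D''$.

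The geometric heart of the argument is to produce, from a standard local picture near $D$, an explicit loop representing $\mu_D$ and to read off that it is a commutator of (conjugates of) $\mu_{D'}$ and $\mu_{D''}$. Concretely, I would take a normal circle to $D$ near its boundary and push it across the Whitney circle $\partial D$: traversing $\partial D$, it runs along an arc on $D'$ and an arc on $D''$, passing through the two paired intersection points. A meridian of $D$ can be isotoped in the complement of $T$ so that it becomes the boundary of a small disk that is forced to puncture $D'$ once and $D''$ once with opposite signs as one goes around — the usual ``meridian of a Whitney disk is a commutator of the meridians of the two paired sheets'' computation (this is the local model underlying the Clifford-torus / Bing-double style pictures). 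This yields $\mu_D = [\,\mu_{D'}^{g}, \mu_{D''}^{h}\,]$ in $\pi_1(X\sm T)$ for some elements $g,h$, where the conjugators come from the arcs of $\partial D$ and the chosen basepath; here one must be careful about the ordering convention (travel from $D'$ through the negative intersection into $D''$) so that the sign and the order of the bracket match the definition of $t_D = \smallrootedYtree{t_{D'}}{t_{D''}}$ and $B(t_D) = [B(t_{D'}), B(t_{D''})]$.

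Finally I would feed the inductive hypotheses into this commutator expression. Since $\mu_{D'}\in \pi_1(X\sm T)_{a+1}$ and $\mu_{D''}\in\pi_1(X\sm T)_{b+1}$, their commutator lies in $\pi_1(X\sm T)_{a+b+2} = \pi_1(X\sm T)_{k+1}$, giving the first assertion. For the second, note that conjugation acts trivially on $\pi_1(X\sm T)_{j}/\pi_1(X\sm T)_{j+1}$ for any $j$ (a conjugate of $z\in\pi_{j}$ equals $z$ modulo $\pi_{j+1}$, as recalled in Remark~\ref{remark:mu-independence-of-meridian-longitude-choice}), and the commutator bracket descends to a well-defined bilinear pairing $\pi_{a+1}/\pi_{a+2} \times \pi_{b+1}/\pi_{b+2} \to \pi_{k+1}/\pi_{k+2}$. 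Hence modulo $\pi_1(X\sm T)_{k+2}$ we may replace $\mu_{D'}^g$ by $B(t_{D'})$ and $\mu_{D''}^h$ by $B(t_{D''})$, obtaining $\mu_D = [B(t_{D'}), B(t_{D''})] = B(t_D)$, as claimed. The main obstacle is the first geometric step: carefully justifying the local commutator formula for the meridian of a Whitney disk, including tracking the conjugating paths and verifying that the sign/order bookkeeping is consistent with the tree orientation conventions fixed in Section~\ref{subsection:trees-intersection-twisting}; once that local model is in hand, the rest is routine lower-central-series commutator calculus.
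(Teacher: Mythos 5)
Your proposal is correct and takes essentially the same route as the paper: induct on order, identify $\mu_D$ as the commutator $[\mu_{D'},\mu_{D''}]$ via the Clifford torus around a paired intersection (with the same care about ordering $D'$, $D''$ so the bracket matches $t_D$), and then conclude by the inductive hypothesis and commutator calculus. The only cosmetic difference is that the paper fixes basepaths so that $\mu_D=[\mu_{D'},\mu_{D''}]$ holds on the nose, whereas you allow conjugates $\mu_{D'}^g,\mu_{D''}^h$ and then observe that the conjugators wash out in the lower-central-series quotient; both are fine.
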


\begin{proof}
  We use an induction on~$k$.  For $k=0$, the conclusion is
  straightforward.  Suppose $D$ is an order $k$ disk with $k\ge 1$ and
  the conclusion holds for order $<k$.  Since $t_D$ has order $k$,
  $B(t_D)$ is a $(k+1)$-fold bracket.  So it suffices to show that
  the meridian $\mu_D$ is of the form~$B(t_D)$.  The Whitney disk $D$
  pairs intersections of two disks $D'$ and $D''$ of order $r$ and $s$
  with $r+s+1=k$ by definition.  The meridians $\mu_{D'}$ of $D'$ and
  $\mu_{D''}$ of $D''$ are standard basis curves of a Clifford torus
  around the involved negative intersection.  Since the Clifford torus
  meets $D$ at a single transverse intersection, $\mu_D$ is equal to a
  commutator of $\mu_{D'}$ and~$\mu_{D''}$.  In fact, choosing $D'$
  and $D''$ in such a way that
  $t_D= \smallrootedYtree{t_{D'}}{t_{D''}}$ holds (see the orientation
  convention in Section~\ref{subsection:trees-intersection-twisting}),
  we have $\mu_D=[\mu_{D'},\mu_{D''}]$.  Since $\mu_{D'}=B(t_{D'})$
  and $\mu_{D''}=B(t_{D''})$ by the induction hypothesis, we have
  $\mu_D=[B(t_{D'}),B(t_{D''})] = B(t_D)$ as desired.
\end{proof}

To compute the Whitney circles, we will use the following notations.
Recall that we assume that a Whitney tower is split, and a twisted
Whitney disk $D$ in a Whitney tower of order $n$ has order~$n/2$ and
$\omega(D)=\pm1$.

\begin{definition}[Complementary tree $t^c_D$ of a Whitney disk~$D$]
  \label{definition:complementary-tree}
  Suppose $D$ is a Whitney disk in an order $n$ twisted Whitney
  tower~$T$.  If $D$ contains two paired intersections, proceed to the
  next stage Whitney disk that pairs the intersections.  Repeating
  this, one eventually reaches either a framed Whitney disk with an
  unpaired intersection $p$ of order $\ge n$, or a twisted Whitney
  disk~$D'$ of order $\frac n2$.  Let $t^u_D := t_p$ in the former
  case and let $t^u_D := \langle t^\tw_{D'},t^\tw_{D'}\rangle$ in the
  latter case.  Our $t^u_D$ contains $t_D$ as a subtree; the root of
  $t_D$ is the midpoint of an edge of~$t^u_D$.  When
  $t^u_D = \langle t^\tw_{D'},t^\tw_{D'}\rangle$, we just fix one of
  the two copies of $t_D$ in~$t^u_D$.  Define the \emph{complementary
    tree} $t^c_D$ of~$D$ to be $t^u_D$ with $t_D$ removed, with the
  root of $t_D$ as the root of~$t^c_D$.  Define the
  \emph{complementary sign} $\epsilon^c_D$ to be the sign
  $\epsilon(p)$ of $p$ if $t^u_D=t_p$, and to be the twisting number
  $\omega(D')$ if $t^u_D = \langle t^\tw_{D'},t^\tw_{D'}\rangle$.
\end{definition}

If $D$ is an order $k$ disk in a Whitney tower of order $n$, then the
complementary tree $t^c_D$ of has order $\ge n-k$, since $t^u_D$ has
order~$\ge n$.

\begin{lemma}[Commutator expression of a Whitney circle]
  \label{lemma:disk-boundary-in-lower-central-series}    
  Suppose $T$ is an order $n$ twisted Whitney tower in a
  4-manifold~$X$ and $D$ is an order $k$ Whitney disk in~$T$ with
  $k>0$.  Let $\gamma_D$ be a pushoff of the Whitney circle
  $\partial D$, taken along the Whitney framing.  Then $\gamma_D$ lies
  in~$\pi_1(X\sm T)_{n-k+1}$, and $\gamma_D = B(t^c_D)^{\epsilon^c_D}$
  in $\pi_1(X\sm T)_{n-k+1}/\pi_1(X\sm T)_{n-k+2}$.
\end{lemma}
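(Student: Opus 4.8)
The plan is to run an induction, but now descending on the order $k$ of $D$ relative to the top order $n$ — equivalently, inducting "from the top of the tower downward." The base case is the top stage: if $k$ is maximal, then $D$ either contains a single unpaired intersection $p$ of order $\ge n$, or (when $n=2\ell$ and $D$ has order $\ell$) $D$ is a twisted Whitney disk. In the first case $\gamma_D$ is a pushoff of $\partial D$ along the Whitney framing, and since $D$ is split with a single interior intersection point, following $\partial D$ records exactly a meridian of the other sheet through $p$ as one passes that point; more precisely, $\gamma_D$ is homotopic to (a conjugate of) $\mu_{D'}^{\epsilon(p)}$ where $p\in D\cap D'$, and by Lemma~\ref{lemma:meridian-in-whitney-tower} this is $B(t_{D'})^{\epsilon(p)}=B(t^c_D)^{\epsilon^c_D}$ modulo the next lower central term, with $t^c_D=t_{D'}$ and $\epsilon^c_D=\epsilon(p)$. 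In the twisted case, the Whitney-framed pushoff of $\partial D$ differs from the disk-framed pushoff by $\omega(D)$ meridians of $D$ itself; the disk-framed pushoff bounds in the complement (it is the boundary of a parallel copy of $D$, pushed off along the disk framing, intersected appropriately), so $\gamma_D$ equals $\mu_D^{\omega(D)}$ up to conjugation, and again Lemma~\ref{lemma:meridian-in-whitney-tower} gives $\mu_D = B(t_D)$, whence $\gamma_D = B(t_D)^{\omega(D')} = B(t^c_D)^{\epsilon^c_D}$ with $t^c_D=t_D$ (the chosen copy) and $\epsilon^c_D = \omega(D')$, where here $D'=D$.

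For the inductive step, suppose $D$ has order $k$ and is not at the top — so $D$ contains two paired intersections $p_1,p_2$, which are paired by a Whitney disk $E$ of order $k+1$. The key geometric input is that the Whitney-framed pushoff $\gamma_D$ of $\partial D$ is homotopic in $X\sm T$ to a product of two meridians of $E$ (one at $p_1$, one at $p_2$, with opposite signs because $p_1,p_2$ have opposite signs) together with a copy of the Whitney-framed pushoff $\gamma_E$ of $\partial E$ — this is the standard "the boundary of a Whitney disk, pushed off, equals the commutator of the two sheets plus a correction living in the next stage" picture, exactly the mechanism used implicitly in the order-raising literature (cf.\ \cite[Section~4]{Conant-Schneiderman-Teichner:2012-2}). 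The two oppositely-signed meridians of $E$ cancel modulo $\pi_1(X\sm T)_{?+1}$ once we know $\mu_E$ lies deep enough: $E$ has order $k+1$, so by Lemma~\ref{lemma:meridian-in-whitney-tower} $\mu_E \in \pi_1(X\sm T)_{k+2}$, and since the complementary tree $t^c_D$ has order $\ge n-k$, the target group we work in is $\pi_1(X\sm T)_{n-k+1}/\pi_1(X\sm T)_{n-k+2}$; the product of the two $\mu_E^{\pm1}$'s is a commutator-type correction sitting in $\pi_1(X\sm T)_{k+2}\subset \pi_1(X\sm T)_{n-k+2}$ as soon as $k+2\ge n-k+2$, i.e.\ $2k\ge n$ — and when $2k<n$ one instead observes that the two oppositely-signed meridians are conjugate by a short path in $\partial E$ and so their product lies in a higher lower-central term by a direct commutator estimate. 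Either way the correction vanishes in the relevant quotient, leaving $\gamma_D \equiv \gamma_E$ (up to conjugation) modulo $\pi_1(X\sm T)_{n-k+2}$.

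Finally I invoke the inductive hypothesis on $E$: it has order $k+1$, so $\gamma_E \in \pi_1(X\sm T)_{n-(k+1)+1} = \pi_1(X\sm T)_{n-k}$ and $\gamma_E = B(t^c_E)^{\epsilon^c_E}$ modulo $\pi_1(X\sm T)_{n-k+1}$. But by construction $t^u_D$ and $t^u_E$ are built by the same "proceed to the next paired stage" recipe, so $t^u_D = t^u_E$, hence $t^c_E$ together with the edge recording the root of $t_E$ recovers $t^c_D$; more precisely $t^c_D$ is $t^c_E$ with the subtree $t_E/t_D$ reattached, and since the complementary signs $\epsilon^c_D,\epsilon^c_E$ are by definition the same sign $\epsilon(p)$ or $\omega(D')$ coming from the common top of the tower, we get $B(t^c_D)^{\epsilon^c_D}$ from $B(t^c_E)^{\epsilon^c_E}$ by bracketing in the extra subtree — which is exactly the content of Lemma~\ref{lemma:meridian-in-whitney-tower} applied to the stage from $D$ up to $E$. (Concretely: $\gamma_D$, traversed once around, is the commutator of a meridian of $D'$-side with $\gamma_E$, matching the rooted-tree grafting $t^c_D = \smallrootedYtree{\ }{\ }$ with one branch $t^c_E$ and the other the complementary subtree.) The membership claim $\gamma_D\in\pi_1(X\sm T)_{n-k+1}$ drops out of the same computation since each ingredient lies at least that deep. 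The main obstacle is the bookkeeping in this last step — matching the tree-grafting on the algebraic side with the geometric commutator picture, and in particular getting the orientation/sign conventions of Section~\ref{subsection:trees-intersection-twisting} to line up so that $B(t^c_D)^{\epsilon^c_D}$ appears rather than its inverse or a conjugate; the lower-central-series estimates controlling the error terms are routine once the case split $2k\ge n$ vs.\ $2k<n$ is made explicit.
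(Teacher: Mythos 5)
Your overall strategy—descending induction from the top-stage disks down, with the same two base cases (a framed disk carrying an unpaired intersection, or a twisted disk)—matches the paper, and those base cases are handled correctly. The problem is in the inductive step, where the central geometric claim is wrong.

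You assert that $\gamma_D$ is homotopic to ``a product of two meridians of $E$ (one at $p_1$, one at $p_2$, with opposite signs) together with a copy of $\gamma_E$,'' and deduce after cancellation that $\gamma_D \equiv \gamma_E$ modulo $\pi_1(X\sm T)_{n-k+2}$. This is not what happens. The pushoff $\gamma_D$, tracked along $\partial D$, picks up two oppositely-signed meridians of the \emph{other sheet} $D'$ (the one $D$ intersects at $p_1$ and $p_2$), not meridians of the next-stage Whitney disk $E=D''$. Moreover, those two meridians of $D'$ are based at the two different intersection points, and the change-of-basepoint path is a parallel of $\partial E$, i.e.\ a copy of $\gamma_E$. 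So $\gamma_D = \mu_{D'}\cdot(\gamma_E\,\mu_{D'}^{-1}\gamma_E^{-1}) = [\mu_{D'},\gamma_E]$. This is the picture in the paper's Figure~\ref{figure:disk-boundary-computation}. In particular, $\gamma_D$ is one lower-central-series level \emph{deeper} than $\gamma_E$: the inductive hypothesis gives $\gamma_E\in\pi_1(X\sm T)_{n-k}$, and bracketing with $\mu_{D'}$ pushes it into $\pi_1(X\sm T)_{n-k+1}$, which is exactly the membership the lemma asserts. Your version $\gamma_D\equiv\gamma_E$ (mod the next term) would put $\gamma_D$ in $\pi_1(X\sm T)_{n-k}$ only, which is too weak, and it also contradicts the target formula $\gamma_D = B(t^c_D)^{\epsilon^c_D}$ since $t^c_D = \smallrootedYtree{t_{D'}}{t^c_E}$ is a genuine bracketing of $t^c_E$ with $t_{D'}$, not $t^c_E$ itself. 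Correspondingly, the cancellation argument and the $2k\ge n$ vs.\ $2k<n$ case split you describe are not needed and do not repair this.

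You actually write the correct relation in the final parenthetical—``$\gamma_D$, traversed once around, is the commutator of a meridian of $D'$-side with $\gamma_E$''—but this contradicts the main body of your inductive step, so the argument as written is internally inconsistent. If you replace the ``two meridians of $E$ plus $\gamma_E$'' picture with the commutator picture in your parenthetical and then apply Lemma~\ref{lemma:meridian-in-whitney-tower} to identify $\mu_{D'}=B(t_{D'})$, you recover the paper's proof; as stated, the inductive step has a genuine gap.
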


It follows from
Lemma~\ref{lemma:disk-boundary-in-lower-central-series} that $\gamma_D$
is trivial in $\pi_1(X\sm T)_{n-k+1}/\pi_1(X\sm T)_{n-k+2}$ if the
complementary tree $t^c_D$ has order~$>n-k$, or equivalently $t^u_D$
has order~$>n$.
  
\begin{proof}[Proof of Lemma~\ref{lemma:disk-boundary-in-lower-central-series}]
  
  Let $G=\pi_1(X\sm T)$.  As a special case, suppose $D$ is a framed
  disk which has an order $\ge n$ unpaired intersection $p$ with
  another disk~$D'$.  Then
  $\gamma_D=(\mu_{D'})^{\epsilon_p}= B(t_{D'})^{\epsilon^c_D}$ by
  Lemma~\ref{lemma:meridian-in-whitney-tower}.
  Since $t^u_D = t_p = \langle t_D,t_{D'}\rangle$,
  $t^c_D = t_{D'}$.  It follows that
  $\gamma_D = B(t^c_D)^{\epsilon^c_D}$ as claimed.  As another special
  case, suppose $D$ is a twisted Whitney disk.  Then since $\gamma_D$
  is taken along the Whitney framing,
  $\gamma_D\cdot(\mu_{D})^{-\omega(D)}$ bounds a parallel of~$D$.
  (Note that the exponent $-\omega(D)$ represents the disk framing
  with respect to the Whitney framing, since
  $\omega(D)\in \Z=\operatorname{SO}(2)$ is defined to be the Whitney
  framing with respect to the disk framing.) Therefore
  $\gamma_D = (\mu_D)^{\omega(D)} = B(t_D)^{\epsilon^c_D}$ by
  Lemma~\ref{lemma:meridian-in-whitney-tower}.  Since
  $t^u_D = \langle t^\tw_D,t^\tw_D\rangle$, $t^c_D$ is $t_D$ itself.
  It follows that $\gamma_D = B(t^c_D)^{\epsilon^c_D}$.

  Now we proceed inductively, from higher to lower stage Whitney
  disks, using the above cases as the initial step.  Suppose $D$ is a
  Whitney disk of order~$k$.  If $D$ is not one of the above two
  special cases, then $D$ is a framed disk with two intersections with
  another disk $D'$ and the intersections are paired by a next stage
  Whitney disk~$D''$.  The induction hypothesis is that
  $\gamma_{D''} = B(t^c_{D''})^{\epsilon^c_{D''}}$.

  We have either $t_{D''}=\smallrootedYtree{t_{D}}{t_{D'}}$
  or~$\smallrootedYtree{t_{D'}}{t_{D}}$.  We will present details only
  for the former case, since the argument applies to the latter case
  in the essentially same way.
  Figure~\ref{figure:disk-boundary-computation} shows the disks $D$,
  $D'$ and $D''$ when $t_{D''}=\smallrootedYtree{t_{D}}{t_{D'}}$.  The
  circular arrows near $\partial D$ and $\partial D''$ specify the
  orientations of $D$ and~$D''$.  The disk $D'$ is oriented in such a
  way that~$\mu_{D'}$ is a positively oriented meridian.  In
  Figure~\ref{figure:disk-boundary-computation}, the negatively
  oriented meridian of $D'$ which is near the $-$ intersection is
  equal to~$\gamma_{D''}^{}\mu_{D'}^{-1}\gamma_{D''}^{-1}$.  (Here one
  may use a basepoint near the $+$ intersection.)  Therefore the
  pushoff $\gamma_D$ of $\partial D$ is the product of $\mu_{D'}$ and
  $\gamma_{D''}^{\mathstrut}\mu_{D'}^{-1}\gamma_{D''}^{-1}$.  By
  Lemma~\ref{lemma:meridian-in-whitney-tower} and the induction
  hypothesis,
  $\gamma_D = [\mu_{D'},\gamma_{D''}] =
  [B(t_{D'}),B(t^c_{D''})^{\epsilon^c_{D''}}]$.  Using
  $\epsilon^c_{D''} = \epsilon^c_D$, and using
  $[a,b^{-1}] = b^{-1}[a,b]^{-1}b$ when $\epsilon^c_D=-1$, we obtain
  $\gamma_D = [B(t_{D'}),B(t^c_{D''})]^{\epsilon^c_D}$
  in~$G_{n-k+1}/G_{n-k+2}$.  Since
  $t_{D''}=\smallrootedYtree{t_{D}}{t_{D'}}$, the complementary tree
  of $D$ is given by $t^c_D = \smallrootedYtree{t_{D'}}{t^c_{D''}}$.
  It follows that
  $B(t^c_D)^{\epsilon^c_D} = [B(t_{D'}), B(t^c_{D''})]^{\epsilon^c_D}
  = \gamma_D$ as promised.
\end{proof}

\begin{figure}[H]
  \begin{tikzpicture}
    [x=5bp,y=5bp,scale=.75,line width=1pt,double distance=1pt,join=round,
    over/.style={draw=white,double=black,double distance=1pt,line width=1.5pt},
    thinover/.style={draw=white,double=black,double distance=.4pt,line width=1.1pt},
    ->-/.style={draw=black,decoration={
        markings,mark=at position #1 with {\arrow{stealth}}},postaction={decorate}},
    >=stealth,
    hidden/.style={text=white,opacity=0}
    ]
    \clip (0,-4) rectangle (50,27);
    \small
    \def\c[#1]#2{coordinate(#2) node[hidden,#1]{\footnotesize$#2$}}
    
    \draw[cap=round] (10,16) \c[above]{A} -- ++(40,0) \c[above]{A1}
    -- ++(-10,-16) \c[below]{A2};
    \draw[thin,rounded corners=4] ($(A)+(.6,-1)$) \c[below]{a} ++(3,0)
    -- ($(A1)+(-1.8,-1)$) \c[left]{a1} -- ($(A2)+(-.8,1)$) \c[above]{a2} -- ++(-4,0);
    \draw[over] (15,-3) \c[left]{B} -- (15,8) \c[left]{P} ++(20,0) \c[right]{Q} --
    ++(0,-11) \c[right]{B1};
    \draw[->,draw=black] (B1) -- +(0,-1);
    \draw[over] (15,5) \c[left]{C} ellipse(2 and 1) ++(20,0) \c[left]{D}
    ellipse(2 and 1);
    \draw[->-=.75] (15,5) ellipse(2 and 1);
    \draw[->-=.75] (35,5) ellipse(2 and 1);

    \draw[over] (C) -- (P) -- ++(0,2) ..controls+(0,22)and+(0,22)..
    ($(Q)+(0,2)$) -- (Q) -- (D);
    \draw (P) circle(1pt) -- (Q) circle(1pt);

    \draw[over] (A) ++(-10,-16) \c[below]{A3} -- ++(39,0);
    \draw[cap=round] (A) -- ++(-10,-16) -- ++(40,0);
    \draw[thinover,rounded corners=4] (a2) ++(-3,0) -- ($(A3)+(1.8,1)$) \c[above]{a3}
    -- ($(a3)!.8!(a)$);
    \draw[->,thin] ($(a3)!.5!(a)$) -- ($(a3)!.8!(a)$);

    \draw[thinover,rounded corners=2]
    ($(P)+(1,2)$) ..controls+(0,20.5)and+(0,20.5).. ($(Q)+(-1,2)$) --
    ($(Q)+(-1,1)$) -- ($(P)+(5,1)$);
    \draw[<-,thin,draw=black] ($(P)+(5,1)$) -- +(2,0);
    
    \draw
    (A) node[anchor=south]{$D$} (B) node[anchor=west]{$D'$}
    ($ .5*(C)+.5*(D)+(0,16) $) node{$D''$}
    (C) node[anchor=east,xshift=-6pt,yshift=-2pt]{$\mu_{D'}$}
    (D) node[anchor=east,xshift=-6pt,yshift=-2pt]{$\gamma_{D''}^{\mathstrut}\mu_{D'}^{-1}\gamma_{D''}^{-1}$}
    (P) node[anchor=east]{$+$} (Q) node[anchor=west]{$-$};
  \end{tikzpicture}
  \caption{The disks $D$, $D'$ and~$D''$ and the meridian~$\mu_{D'}$.}
  \label{figure:disk-boundary-computation}
\end{figure}

The proof of Lemma~\ref{lemma:disk-boundary-in-lower-central-series}
applies to an order 0 disk $D$ in essentially the same way.  The
statement is as follows.

\begin{lemma}
  \label{lemma:order-zero-disk-boundary}
  Suppose $T$ is a twisted Whitney tower of order~$n\ge 0$ in a
  4-manifold $X$ bounded by a framed link~$L$.  Then the $i$th
  longuitude $\lambda_i$ of $L$ taken along the given framing lies in
  $\pi_1(X\sm T)_{n+1}$.  Furthermore, if the formal sum $t^\tw_n(T)$
  is of the form
  \[
    t^\tw_n(T) = \sum_t a(t) \cdot t + \sum_{t^\tw} b(t^\tw) \cdot t^\tw
  \]
  with $a(t)$, $b(t)\in \Z$, then
  \[
    \lambda_i = \Bigg(\prod_t \prod_{\substack{v\in t \\ \ell(v)=i}}
    B(t_v)^{a(t)} \Bigg) \cdot \Bigg(\prod_{t^\tw}
    \prod_{\substack{u\in t^\tw \\ \ell(u)=i}} B(\langle t^\tw,t^\tw
    \rangle_u)^{b(t^\tw)}\Bigg) \text{ in }
    \frac{\pi_1(X\sm T)_{n+1}}{\pi_1(X\sm T)_{n+2}}.
  \]
  Here $t$ varies over order $n$ trees appearing in $t^\tw_n(T)$, $v$
  varies over the univalent vertices of $t$ with decoration
  $\ell(v)=i$, $t^\tw$ varies over order $\frac n2$ $\tw$-trees
  appearing in $t^\tw_n(T)$, and $u$ varies over univalent vertices of a
  fixed copy of $t^\tw_D$ in $\langle t^\tw_D,t^\tw_D \rangle$ with
  label $\ell(u)=i$.
\end{lemma}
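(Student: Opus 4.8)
The plan is to run the argument in the proof of Lemma~\ref{lemma:disk-boundary-in-lower-central-series} with the order $0$ disk $D_i$ in the role of the Whitney disk $D$; the one genuinely new feature is that $D_i$ may meet $T$ in more than two points, so in place of a single figure-eight computation we get one contribution for \emph{each} intersection point on $D_i$. Write $L_i=\partial D_i$. When $n\ge1$ the framing of $D_i$ restricts on $L_i$ to the given framing (Definition~\ref{definition:link-bounding-whitney-tower}(ii)), while for $n=0$ it differs from it by $\omega(D_i)$; hence in either case $\lambda_i$ equals $\mu_{D_i}^{\omega(D_i)}$ times the pushoff of $\partial D_i$ along the \emph{disk} framing of $D_i$, i.e.\ the boundary of a parallel copy of $D_i$. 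Puncturing that parallel copy at each of its intersection points with $T$ exhibits this last pushoff, up to conjugation and reordering of factors, as the product over the punctures of meridians $\mu_S^{\epsilon(p)}$, where $S$ is the sheet meeting $D_i$ at $p$; a self-intersection of $D_i$ contributes one such factor for each of its two preimages, in which case $S=D_i$.

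Next I would organise these points as the split hypothesis dictates: each is either unpaired, hence of order $\ge n$, or belongs to a pair cancelled by a Whitney disk. For a pair $\{p,q\}$ of intersections of $D_i$ with a sheet $S$ paired by a Whitney disk $D$ (necessarily of order $\operatorname{ord}(S)+1$), the figure-eight computation from the proof of Lemma~\ref{lemma:disk-boundary-in-lower-central-series} gives joint contribution $[\mu_S,\gamma_D]^{\pm1}$ up to conjugation, $\gamma_D$ being the Whitney-framed pushoff of $\partial D$; when $\{p,q\}$ is a self-intersection of $D_i$ there are two such commutators, one for each of the two coincident occurrences of $D_i$. By Lemma~\ref{lemma:disk-boundary-in-lower-central-series}, $\gamma_D\in\pi_1(X\sm T)_{n-\operatorname{ord}(D)+1}$, and by Lemma~\ref{lemma:meridian-in-whitney-tower}, $\mu_S\in\pi_1(X\sm T)_{\operatorname{ord}(S)+1}$, so each factor — $\mu_S^{\pm1}$ for an unpaired $p$, and $[\mu_S,\gamma_D]^{\pm1}$ for a pair — already lies in $\pi_1(X\sm T)_{n+1}$. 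This proves the first assertion. Since $\pi_1(X\sm T)_{n+1}/\pi_1(X\sm T)_{n+2}$ is central and abelian, all the conjugations and reorderings disappear there, and it remains to evaluate each factor modulo $\pi_1(X\sm T)_{n+2}$.

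Working in that quotient I would substitute $\mu_S=B(t_S)$ and $\gamma_D=B(t^c_D)^{\epsilon^c_D}$ and use $[a,b^{-1}]=b^{-1}[a,b]^{-1}b$, which equals $[a,b]^{-1}$ modulo one further stage of the lower central series. A degree count shows $[\mu_S,\gamma_D]^{\pm1}$ survives exactly when $t^c_D$ has order $n-\operatorname{ord}(D)$, i.e.\ when $t^u_D$ has order exactly $n$; then $t^u_D$ is one of the order $n$ trees $t$ appearing in $t^\tw_n(T)$, or else $t^u_D=\langle t^\tw,t^\tw\rangle$ for an order $\tfrac n2$ twisted disk with $t^\tw$ appearing in $t^\tw_n(T)$. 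Likewise $\mu_S^{\pm1}$ from an unpaired $p$ survives only when $\operatorname{ord}(S)=n$, in which case $t_p=\langle t_{D_i},t_S\rangle$ is an order $n$ tree in $t^\tw_n(T)$ with $(t_p)_v=t_S$ for the vertex $v$ adjacent to the unpaired-intersection edge; and $\mu_{D_i}^{\omega(D_i)}$ is the contribution of the order $0$ $\tw$-tree of $D_i$, nonzero only for $n=0$. In every surviving case the contribution equals $B(t_v)^{a(t)}$ (or $B(\langle t^\tw,t^\tw\rangle_u)^{b(t^\tw)}$), where $v$ (resp.\ $u$) is the univalent vertex labeled $i$ that is the single-vertex subtree $t_{D_i}$ attached at the trivalent vertex of the tree which corresponds to $D$; the identification of the commutator with $B(t_v)$, sign included, is exactly the orientation bookkeeping already carried out in the proof of Lemma~\ref{lemma:disk-boundary-in-lower-central-series}, using the identities $t^u_D=t^u_{D''}$ and $t^c_D=\smallrootedYtree{t_{D'}}{t^c_{D''}}$ from that proof.

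The remaining, and I expect hardest, step is the combinatorial matching: one must verify that as $D$ ranges over the Whitney disks pairing a pair on $D_i$ with $t^u_D$ of order $n$, as $p$ ranges over the unpaired order $n$ intersections on $D_i$, and (for $n=0$) over the twisting of $D_i$, the vertices thus produced run through \emph{every} univalent vertex labeled $i$ of \emph{every} tree and $\tw$-tree appearing in $t^\tw_n(T)$, each exactly once. For a fixed such tree and vertex $v$ with $\ell(v)=i$ this amounts to following the unique descending chain of Whitney disks from the unpaired intersection (or from the twisted disk) down to the occurrence of $D_i$ recorded by $v$, and singling out the unique Whitney disk of that chain whose cancelled pair involves that occurrence of $D_i$; the recursive structure of $t^u$ and $t^c$ makes this compatible with the bracketings. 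The one point needing real care is the self-intersection case, where a single double point of $D_i$ must be matched with the two coincident univalent vertices of $\smallrootedYtree{t_{D_i}}{t_{D_i}}$ — the same phenomenon, resolved the same way, as in the $n=0$ case where an unpaired self-intersection of $D_i$ accounts for both $i$-labeled vertices of $\langle t_{D_i},t_{D_i}\rangle$. Assembling the surviving contributions over all such $(t,v)$ and $(t^\tw,u)$ then gives the displayed product expression for $\lambda_i$.
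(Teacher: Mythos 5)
Your proposal is correct and follows essentially the same route as the paper's own proof: you decompose the pushoff of $\partial D_i$ into local contributions from intersection points and paired arcs, feed each contribution through the inductive computation of Lemma~\ref{lemma:disk-boundary-in-lower-central-series}, note that everything commutes mod $\pi_1(X\smallsetminus T)_{n+2}$, and finish with the combinatorial matching of surviving factors with $i$-labeled univalent vertices of trees in $t^\tw_n(T)$ (including the $n=0$ twisting of the base disk and the self-intersection subtlety, both of which the paper also flags). Your write-up is somewhat more explicit than the paper's about the framing discrepancy for $n=0$ and about which factors die on degree grounds, but these are expository differences, not a different argument.
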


Recall that for a tree $t$ and its univalent vertex $v$, $t_v$ is the
rooted tree obtained by deleting the label of $v$ and taking $v$ as
the root, as we did in Definition~\ref{definition:summation-map}.

\begin{proof}
  Let $D$ be the $i$th order 0 disk of~$T$.  For each order $n$
  unpaired intersection on $D$, choose a disk neighborhood in~$D$.
  For each pair of opposite intersections of $D$ and another disk $D'$
  which are paired by a next stage disk $D''$, choose a disk
  neighborhood of the arc $D\cap D''$ in~$D$.  Denote these disk
  neighborhoods by $U_1,U_2,\ldots\,$; so each $U_j$ contains either
  an order $n$ unpaired intersection or an arc of the form
  $D\cap D''$.  We may assume that the subdisks $U_j$ are mutually
  disjoint.  For each $U_j$, a pushoff $\gamma_j$ of $\partial U_j$ is
  computed by the argument of
  Lemma~\ref{lemma:disk-boundary-in-lower-central-series}.  Here,
  instead of the tree $t^u_D$ used in
  Lemma~\ref{lemma:disk-boundary-in-lower-central-series}, we use
  either an order $n$ tree $t$ appearing in $t^\tw_n(T)$, or
  $\langle t^\tw,t^\tw\rangle$ for some order $\frac n2$ $\tw$-tree
  $t^\tw$ appearing in $t^\tw_n(T)$, which has a univalent vertex $v$
  with label $\ell(v)=i$.  Then $t_v$ or
  $\langle t^\tw,t^\tw\rangle_v$ plays the role of the complementary
  tree.  Therefore, by the argument of
  Lemma~\ref{lemma:disk-boundary-in-lower-central-series}, we obtain
  $\gamma_j = B(t_v)^{\pm1}$ or
  $B(\langle t^\tw,t^\tw\rangle_v)^{\pm1}$, where $\pm$ is the sign of
  the coefficient of $t$ or~$t^\tw$.
  When $n>0$, each univalent vertex $v$ of a tree appearing in
  $t^\tw_n(T)$ with $\ell(v)=i$ is involved in the computation of
  exactly one~$\gamma_j$.  Since a pushoff of the boundary of $D$ is
  equal to $\prod_j \gamma_j$, the promised formula for $\lambda_i$
  follows.  When $n=0$, the situation is indeed simpler but a minor
  change is needed.  All intersections on $D$ are unpaired and of
  order 0, and in addition, there may be trees of the form
  $t^\tw=\smalltwItree{i}$ in $t^\tw(T)$, which is not involved in the
  computation for any~$D_j$ but yields a $\pm$ twisting for $D$.
  Nonetheless, the contribution of such a twisting to $\lambda_i$ is
  $\mu_{D}^{\pm1} = B(\smallItree{i})^{\pm1} = B(\langle
  t^\tw,t^\tw\rangle_v)^{\pm1}$, where $\mu_{D}$ is a meridian of the
  $i$th order zero disk~$D$.  Therefore the claimed formula for
  $\lambda_i$ holds.
\end{proof}

\subsection{A Milnor type theorem for rational Whitney towers}
\label{section:milnor-type-theorem-for-tower}

Define the \emph{rational lower central subgroups} $G^\Q_k$ ($k\ge 1$)
of a group $G$ by $G^\Q_1 := G$ and
\[
  G^\Q_{k+1} = \Ker\Big\{G^\Q_k \to \frac{G^\Q_k}{[G,G^\Q_k]} \to
  \frac{G^\Q_k}{[G,G^\Q_k]} \otimesover{\Z} \Q \Big\}.
\]
It is straightforward to verify that $G_k\subset G^\Q_k$.

\begin{theorem}[Milnor type theorem for rational Whitney towers]
  \label{theorem:milnor-theorem-rational-tower}
  Suppose $T$ is a twisted Whitney tower of order $n$ in a rational
  homology 4-ball $X$ which is bounded by an $m$-component link $L$
  in~$\partial X$.  Let $F\to \pi_1(X\sm T)$ be a homomorphism of the
  free group $F$ generated by $x_1,\ldots,x_m$ which sends $x_i$ to a
  meridian of the $i$th component of~$L$.  Then for each $k\le n$, it
  induces an isomorphism
  \[
    \frac{\pi_1(X\sm T)^\Q_{k+1}}{\pi_1(X\sm T)^\Q_{k+2}} \otimesover{\Z} \Q
    \cong \frac{F_{k+1}}{F_{k+2}} \otimesover{\Z} \Q.
  \]
\end{theorem}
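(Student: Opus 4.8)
The plan is to deduce the theorem from a rational version of Stallings' theorem on the lower central series, in the refined form due to Dwyer, applied to the meridian homomorphism $\phi\colon F\to G:=\pi_1(X\sm T)$. The input I will use is the following: if $\phi$ induces an isomorphism $H_1(F;\Q)\to H_1(G;\Q)$ and the natural map $H_2(G;\Q)\to H_2(G/G^\Q_{n+1};\Q)$ is the zero map, then $\phi$ induces isomorphisms $(F_j/F_{j+1})\otimes_\Z\Q\to (G^\Q_j/G^\Q_{j+1})\otimes_\Z\Q$ for every $j\le n+1$. (Here one uses that $H_2(F;\Q)=0$ because $F$ is free, so Dwyer's ``excess $H_2$'' hypothesis collapses to the displayed vanishing; that $H_2(G;\Q)\to H_2(G/G^\Q_j;\Q)$ factors through $H_2(G/G^\Q_{n+1};\Q)$ for $j\le n+1$, so a single vanishing suffices at every stage of the induction; and that $(G^\Q_j/G^\Q_{j+1})\otimes_\Z\Q=(G_j/G_{j+1})\otimes_\Z\Q$.) Reindexing via $j=k+1$, this is exactly the asserted isomorphism for $k\le n$. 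So the proof reduces to two homological facts: (i) $\phi_*\colon H_1(F;\Q)\to H_1(G;\Q)$ is an isomorphism; and (ii) $H_2(G;\Q)\to H_2(G/G^\Q_{n+1};\Q)$ is zero.

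For (i), a standard Mayer--Vietoris computation with a regular neighbourhood $N$ of $T$ in the rational homology ball $X$ shows that $H_1(X\sm T;\Q)$ is generated by the meridians of the disks of $T$; by Lemma~\ref{lemma:meridian-in-whitney-tower} the meridian of any disk of positive order is a commutator and hence vanishes in $H_1(-;\Q)$, leaving the $m$ meridians $\mu_1,\dots,\mu_m$ of the order $0$ disks. These are linearly independent because $X$ is a rational homology ball: the ``linking with $D_i$'' homomorphisms $H_1(X\sm T;\Q)\to H_1(X\sm D_i;\Q)\cong\Q$ send $[\mu_j]\mapsto\delta_{ij}$. Since $\phi(x_i)=\mu_i$, this proves (i).

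Step (ii) is the heart of the matter, and it is the Whitney-tower analogue of the grope-duality argument underlying \cite[Lemma~33]{Conant-Schneiderman-Teichner:2012-3}. Since $H_2$ of a space surjects onto $H_2$ of its fundamental group, it is enough to show that every closed oriented surface $\Sigma\subset X\sm T$ dies in $H_2(G/G^\Q_{n+1};\Q)$. As $X$ is a rational homology $4$-ball, $\Sigma$ bounds a rational $3$-chain $W$ in $X$, which we take to be a mapped $3$-manifold transverse to $T$, so that $W\cap T$ is a disjoint union of curves in the interiors of the disks of $T$. The plan is to convert $W$ into a rational nullbordism of $\Sigma$ lying in $X\sm T$ modulo $(n+1)$-fold commutators --- equivalently, into a ``grope of class $n+1$'' bounded by $\Sigma$ --- so that $[\Sigma]$ lands in the $(n+1)$-st rational Dwyer filtration. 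A curve of $W\cap T$ lying on a disk $D$ of order $k$ is surgered away by tubing $W$ along an annulus parallel to a meridian of $D$; by Lemma~\ref{lemma:meridian-in-whitney-tower} this meridian represents the $(k+1)$-fold commutator $B(t_D)$, and following the cascade of Whitney disks that pairs the intersections of $D$ --- exactly as organised by Definition~\ref{definition:complementary-tree} and the proof of Lemma~\ref{lemma:disk-boundary-in-lower-central-series} --- each resulting error term is represented by a product of commutators of weight $\ge n+1$; the order-$n$ hypothesis is precisely what forces every intersection of order $<n$ to have been paired by a Whitney disk, so no error of weight $\le n$ survives. The contribution of the peripheral tori of $L$ is handled by Lemma~\ref{lemma:order-zero-disk-boundary}, which already places the longitudes in $\pi_1(X\sm T)_{n+1}$. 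Hence $[\Sigma]$ maps to $0$ in $H_2(G/G^\Q_{n+1};\Q)$. Making this precise --- organising the bookkeeping of the intersections of $W$ with $T$ and of the Whitney-disk corrections as an induction over the stages of $T$ so that all the commutator weights add up correctly, in the manner of Krushkal--Teichner grope duality --- is where the real work lies; once (i) and (ii) are established, the Stallings--Dwyer statement quoted above immediately yields the theorem.
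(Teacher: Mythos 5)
Your overall strategy matches the paper's: reduce to the rational Stallings--Dwyer theorem applied to the meridian map, after checking that (i) $H_1(F;\Q)\cong H_1(G;\Q)$ and (ii) $H_2(G;\Q)\to H_2(G/G_{n+1};\Q)$ vanishes, where $G=\pi_1(X\sm T)$. One small caveat on the bookkeeping: Dwyer's hypothesis (as stated in the paper) is phrased with the integral lower central subgroup $G_{n+1}$; since $G_{n+1}\subset G^\Q_{n+1}$, your condition with $G^\Q_{n+1}$ is a weaker kernel condition and does not directly feed Dwyer's theorem --- keep $G_{n+1}$. Your proof of (i) (higher-order meridians are commutators, order-zero meridians separated by linking) is a valid variant of the paper's, which instead reads off $H_1(X\sm T;\Q)\cong\Q^m$ directly from Lefschetz duality as $H^2(T,\partial T;\Q)$.

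The genuine gap is in (ii), which you yourself flag as ``where the real work lies.'' Pushing a rational $3$-chain $W$ with $\partial W=\Sigma$ off $T$ exhibits $[\Sigma]$ in $H_2(X\sm T;\Q)$ as a sum of meridional tori $c\times\mu_c$, where $c$ ranges over the circles of $W\cap T$ and $\mu_c$ is a meridian of the disk $D\supset c$. Lemma~\ref{lemma:meridian-in-whitney-tower} gives $\mu_c\in G_{k+1}$ when $D$ has order $k$, but $c$ is an arbitrary circle on $D$ and a priori lies only in $G_1$; such a torus is only known to die in $H_2(G/G_{k+1};\Q)$, which is too weak whenever $k<n$. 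Your invocation of Definition~\ref{definition:complementary-tree} and Lemma~\ref{lemma:disk-boundary-in-lower-central-series} points in the right direction, but those results compute pushoffs of Whitney circles $\partial D''$, not general curves $c$ lying on $D$. What the argument needs --- and what the paper's Lemma~\ref{lemma:tower-exterior-homology}(2) actually supplies --- is a further decomposition of each $c$ into pieces dual to the $1$-cells of a CW structure on $(T,\partial T)$: small circles around unpaired intersections on $D$ (yielding Clifford tori with both factors deep, Case~1), and small circles around arcs $D\cap D''$ of attached Whitney disks (yielding the Case~2 torus, whose second basis curve is $\beta=[\mu_{D'},\partial D'']\in G_{n-r+1}$, this last containment being where Lemma~\ref{lemma:disk-boundary-in-lower-central-series} is really used). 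Without that decomposition, the ``cascade'' bookkeeping you defer does not close; and once you carry it out, you have in effect reproduced the paper's proof of Lemma~\ref{lemma:tower-exterior-homology}, just described from the $3$-chain rather than the cohomological side of duality.
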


To prove Theorem~\ref{theorem:milnor-theorem-rational-tower}, we will
use the following homology computation.

\begin{lemma}
  \label{lemma:tower-exterior-homology}
  Suppose $T$ is a twisted Whitney tower of order $n$ in a rational
  homology 4-ball~$X$.  Then
  $\tilde H_i(X\sm T;\Q) \cong H^{3-i}(T,\partial T;\Q)$, and the
  following hold.
  \begin{enumerate}
  \item The meridians of the order 0 disks form a basis for
    $H_1(X\sm T;\Q)$.
  \item $H_2(X\sm T;\Q)$ is spanned by classes of tori which have
    standard basis curves $\alpha$ and $\beta$ such that
    $\alpha\in \pi_1(X\sm T)_k$ and $\beta\in \pi_1(X\sm T)_{n-k+2}$
    for some~$k$.
  \end{enumerate}
\end{lemma}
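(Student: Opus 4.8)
The plan is to establish the displayed isomorphism by an Alexander duality argument after closing up $X$, and then to read off (1) and (2) from the geometric (linking) form of that isomorphism together with the commutator computations of Lemmas~\ref{lemma:meridian-in-whitney-tower} and~\ref{lemma:disk-boundary-in-lower-central-series}. For the duality, glue a $4$-ball to $X$ along $\partial X=S^3$ to obtain a closed oriented $4$-manifold $\hat X$, which is a $\Q$-homology $S^4$ by a Mayer--Vietoris computation with $\Q$ coefficients. In the attached $D^4$, cone the link $L=\partial T$ to an interior point to get $C(L)$ with $C(L)\cap S^3=L$, and set $\hat T:=T\cup_L C(L)$, a finite $2$-complex in $\hat X$. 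Since $D^4\sm C(L)$ deformation retracts onto $\partial D^4\sm L=S^3\sm L$ (push away from the cone point), the complement $\hat X\sm\hat T=(X\sm T)\cup_{S^3\sm L}(D^4\sm C(L))$ deformation retracts onto $X\sm T$; and since $C(L)$ is contractible, $\tilde H^{*}(\hat T;\Q)\cong\tilde H^{*}(\hat T/C(L);\Q)=\tilde H^{*}(T/\partial T;\Q)=H^{*}(T,\partial T;\Q)$. Alexander duality in the $\Q$-homology sphere $\hat X$ (which holds with $\Q$ coefficients for the finite complex $\hat T$, being the composite of Poincar\'e--Lefschetz duality for $\hat X$ with the exact sequence of $(\hat X,\hat T)$, the correction terms vanishing because $\tilde H_{*}(\hat X;\Q)$ is concentrated in degree $4$) then gives $\tilde H_i(\hat X\sm\hat T;\Q)\cong\tilde H^{3-i}(\hat T;\Q)$, and chaining the three isomorphisms yields $\tilde H_i(X\sm T;\Q)\cong H^{3-i}(T,\partial T;\Q)$; geometrically this isomorphism is the linking pairing of $\hat X$.

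For (1): under the induced linking pairing $H_1(X\sm T;\Q)\otimes H_2(T,\partial T;\Q)\to\Q$ (identified with $H_1(\hat X\sm\hat T;\Q)\otimes\tilde H_2(\hat T;\Q)\to\Q$), the meridian $\mu_i$ of the $i$th order $0$ disk pairs to $1$ with the class of the capped disk $D_i\cup C(L_i)$ and to $0$ with the others, as one sees using a small meridian disk of $D_i$. Now $H_2(T,\partial T;\Q)\cong\tilde H_2(\hat T;\Q)$ is free of rank $m$ on these $m$ capped disks --- there are no relative $2$-boundaries ($T$ is $2$-dimensional), and the Whitney disks contribute nothing since their boundary circles are pairwise disjoint embedded circles disjoint from $\partial T$ --- so $\mu_1,\dots,\mu_m$ is the dual basis, hence a basis of $H_1(X\sm T;\Q)$.

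For (2): the duality gives $\dim_\Q H_2(X\sm T;\Q)=\dim_\Q H_1(T,\partial T;\Q)$, which an Euler characteristic count evaluates as $m-\chi(T)=I-w$, where $w$ is the number of Whitney disks and $I$ the number of double points of $T$ (using $\chi(T)=m+w-I$, $H_2(T,\partial T;\Q)=\Q^m$, $H_0(T,\partial T;\Q)=0$); as $T$ is split, each Whitney disk pairs exactly two double points and the others are unpaired, so $I=2w+u$ and the dimension is $w+u$. I would then exhibit $w+u$ tori of the required shape: for each unpaired intersection $p\in D\cap D'$ --- necessarily of order $\ge n$, since lower order intersections are paired --- the Clifford torus at $p$, whose standard curves $\mu_D=B(t_D)$ and $\mu_{D'}=B(t_{D'})$ lie in $\pi_1(X\sm T)_{\operatorname{ord}(D)+1}$ and $\pi_1(X\sm T)_{\operatorname{ord}(D')+1}$ by Lemma~\ref{lemma:meridian-in-whitney-tower} (and since $\operatorname{ord}(D)+\operatorname{ord}(D')\ge n$ one finds a $k$ with $\mu_D\in\pi_1(X\sm T)_k$, $\mu_{D'}\in\pi_1(X\sm T)_{n-k+2}$); and for each Whitney disk $W$ of order $\ell$ the torus on the boundary of a tubular neighbourhood of the Whitney circle $\partial W$ with standard curves a Whitney-framed pushoff $\gamma_W$ of $\partial W$ and a meridian $\mu_W$ of $W$, which lie in $\pi_1(X\sm T)_{n-\ell+1}$ and $\pi_1(X\sm T)_{\ell+1}$ by Lemmas~\ref{lemma:disk-boundary-in-lower-central-series} and~\ref{lemma:meridian-in-whitney-tower} (again of the required shape; for a twisted $W$ of order $n/2$ both curves sit in $\pi_1(X\sm T)_{n/2+1}$). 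That these $w+u$ tori span $H_2(X\sm T;\Q)$ I would prove by a sweeping argument: pushing the Clifford torus of an intersection across the Whitney disk pairing it rewrites it, rationally, in terms of the Clifford tori of that disk's own double points, the boundary torus of the disk, and, when the disk is twisted, its twisting torus; iterating, every Clifford torus --- in particular those at paired low-order intersections, which by themselves violate the condition --- becomes a rational combination of the listed tori. Since the listed tori number exactly $\dim H_2(X\sm T;\Q)$, spanning is equivalent to linear independence, which I would verify by pairing them against a dual basis of $H_1(\hat T;\Q)$ read off from the cell structure of $T$ and checking the pairing matrix is triangular with nonzero diagonal.

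The main obstacle is the spanning step of (2): making the sweeping relations and the twisting contribution precise, and extracting from the cell structure of a split Whitney tower a dual basis of $H_1(T,\partial T;\Q)$ against which the linking pairing is nondegenerate; the Euler characteristic bookkeeping --- which double points are paired, and the effect of attaching Whitney disks along circles running through double points --- is also somewhat delicate. An alternative that avoids the sweeping relations is Mayer--Vietoris for $X=(X\sm\operatorname{int}N(T))\cup N(T)$: since $\tilde H_{*}(X;\Q)$ and $H_i(T;\Q)$ for $i\ge 2$ vanish, one gets $H_2(X\sm T;\Q)\cong H_2(\partial_+N(T);\Q)$, and the latter is computed from the standard model of the frontier of a regular neighbourhood of a $2$-complex as a union of circle bundles over its $2$-cells, whose $H_2$ is visibly generated by the tori above.
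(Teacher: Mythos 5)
Your overall plan matches the paper's in broad outline — prove the duality isomorphism and then exhibit generating tori whose standard curves are located in the lower central series by Lemmas~\ref{lemma:meridian-in-whitney-tower} and~\ref{lemma:disk-boundary-in-lower-central-series} — and the duality step and part (1) are fine, though argued by a different route. Where the paper excises to a regular neighborhood $N$ of $T$ and applies Poincar\'e--Lefschetz duality for the triple $(N,\partial_+N,\partial_-N)$ to get $\tilde H_i(X\sm T;\Q)\cong H_{i+1}(N,\partial_+N;\Q)\cong H^{3-i}(N,\partial_-N;\Q)\cong H^{3-i}(T,\partial T;\Q)$, you close $X$ off to a $\Q$-homology $S^4$, cone the link, and use Alexander duality. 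Both are correct and both visibly produce the same geometric (linking) pairing, so the difference is a matter of taste; your version costs a little bookkeeping with the cone and the extended complex.

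Part (2) is where there is a genuine gap, and you flag it yourself. Two points. First, the tori you propose for Whitney disks are not the ones the paper uses, and it is not clear they are even nontrivial classes: the natural torus $\partial W\times S^1\subset\partial N(W)$ bounds the solid torus $W\times S^1$, so you would need a careful account of how $T$ meets $W\times S^1$ (through $W$'s own double points and its twisting) to see that the class survives; your description of the standard curves as ``$\gamma_W$ and $\mu_W$'' also elides the disk-versus-Whitney framing discrepancy for twisted $W$. Second, and more importantly, none of the three candidate spanning arguments you sketch (the sweeping relations, the pairing matrix against a dual basis, the Mayer--Vietoris via $\partial_+N(T)$) is carried out, and the Euler-characteristic bookkeeping you base the dimension count on is itself flagged as delicate.

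The paper avoids both issues by starting from the right CW model: $(T,\partial T)$ is homotopy equivalent to $m$ copies of $(D^2,S^1)$ with one $1$-cell $e_j$ attached to their interiors for each unpaired intersection and for each Whitney disk of positive order. These $e_j$ are automatically a basis of $H_1(T,\partial T;\Q)$, so under the duality it suffices to produce one torus $C_j$ dual to each $e_j$ and compute its standard curves. For an unpaired intersection, $C_j$ is the Clifford torus, as you say. For a Whitney disk $D''$ pairing sheets $D$, $D'$ of orders $r$, $s$, the paper's $C_j$ is the torus of Figure~\ref{figure:dual-torus-whitney-disk}, whose standard curves are a meridian $\mu_D\in G_{r+1}$ and the commutator $\beta=[\mu_{D'},\partial D'']\in G_{n-r+1}$ — not $\gamma_{D''}$ and $\mu_{D''}$. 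Since the $C_j$ are dual to a basis, they span, and no dimension count, sweeping relation, or independence check is needed. I would replace your spanning sketches by this: write down the CW model of $(T,\partial T)$ and a dual torus per $1$-cell, then verify the standard-curve estimates.
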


\begin{proof}
  Let $G=\pi_1(X\sm T)$ and let $N$ be a regular neighborhood of $T$
  in~$X$.  Let $\partial_-N := \partial N \cap \partial X$ and
  $\partial_+N := \ol{\partial N \sm \partial_-N}$.  Then
  
  \begin{equation}
    \label{equation:homology-duality-computation}
    \begin{aligned}
      \tilde H_i(X\sm T;\Q)
      &\cong H_{i+1}(X,X\sm T;\Q) && \text{since }\tilde H_*(X;\Q)=0,\\
      &\cong H_{i+1}(N,\partial_+ N;\Q) && \text{by excision,} \\
      &\cong H^{3-i}(N,\partial_- N;\Q) && \text{by duality for }(N,\partial_+N,\partial_-N), \\
      &\cong H^{3-i}(T,\partial T;\Q) &&\text{since }(N,\partial_- N) \simeq (T,\partial T).
    \end{aligned}
  \end{equation}
  Since $H_2(T,\partial T)$ is the free abelian group generated by the
  fundamental classes of the order zero disks rel boundary, the
  meridians of the order zero disks, which are dual to the fundamental
  classes, form a basis of $H_1(X\sm T;\Q)$.  This proves~(1).
  
  The remaining part is devoted to the proof of~(2).  Let $m$ be the
  number of order zero disks.  The pair $(T,\partial T)$ is homotopy
  equivalent to
  $K:=\big(\bigsqcup_{i=1}^m (D^2,S^1)\big) \cup \big(\bigsqcup_j
  e_j\big)$, where each $e_j$ is a 1-cell attached along a map
  $\partial e_j = S^0 \hookrightarrow \bigsqcup_{i=1}^m \inte D^2$.
  Indeed each $e_j$ is associated to either an unpaired intersection
  of $T$ or a Whitney disk of order~$>0$.  For each $e_j$, we will
  describe a torus $C_j$ which is dual to $e_j$ and has standard basis
  curves $\alpha$ and $\beta$ such that $\alpha\in G_{k}$ and
  $\beta\in G_{n-k+2}$ for some~$k$.  Since the dual tori $C_j$ span
  $H_2(X\sm T;\Q)$ by~\eqref{equation:homology-duality-computation},
  the conclusion~(2) follows.

  \begin{case-named}[Case~1]
    Let $e_j$ be a 1-cell of $K$ associated to an unpaired
    intersection $p$ between two disks $D$ and~$D'$.  In $T$, $e_j$
    corresponds to an arc $\gamma$ in $T$ from $D$ to $D'$
    through~$p$.  See Figure~\ref{figure:arc-unpaired-intersection}.
    Let $C_j$ be the Clifford torus around~$p$.  The torus $C_j$ is
    dual to the 1-cell~$e_j$.  A meridian $\mu$ of $D$ and a meridian
    $\mu'$ of $D'$ are standard basis curves of~$C_j$.  Let $r$ and
    $s$ are the orders of $D$ and $D'$ respectively.  Then
    $\mu\in G_{r+1}$ and $\mu'\in G_{s+1}$ by
    Lemma~\ref{lemma:meridian-in-whitney-tower}.  Since the
    intersection $p$ is left unpaired, $r+s\ge n$.  This shows that
    $C_j$ satisfies the promised property.
  \end{case-named}

  \begin{figure}[H]
    \begin{tikzpicture}
      [x=5bp,y=5bp,scale=1,line width=1pt,double distance=1pt,join=round,
      over/.style={draw=white,double=black,double distance=1pt,line width=1.5pt},
      dotted/.style={line width=.5pt,densely dotted,cap=round},
      hidden/.style={text=white,opacity=0}
      ]
      \small
      \def\c[#1]#2{coordinate(#2) node[hidden,#1]{\footnotesize$#2$}}
      \draw (1,0) \c[below]{A} -- (0,0) -- ++(5,10) node[anchor=east]{$D$}
      -- ++(25,0) -- ++(-5,-10) -- ++(-1,0) \c[below]{B};
      \draw[over] (15,-3) node[anchor=east]{$D'$} -- ++(0,16);
      \draw[fill] (15,5) circle(1.5pt) node[anchor=east]{$p$};
      \draw[dotted] ($ (15,-3)+(2pt,0) $) |- (25,5) node[anchor=south]{$\gamma$};
      \draw[over] (A)--(B);
    \end{tikzpicture}
    \caption{The arc $\gamma$ passing through an unpaired
      intersection~$p$.}
    \label{figure:arc-unpaired-intersection}
  \end{figure}

  \begin{case-named}[Case~2]
    Let $e_j$ be a 1-cell of $K$ associated to a Whitney disk $D''$
    between two disks $D$ and~$D'$.  In $T$, $e_j$ corresponds to an
    arc $\gamma$ in $T$ from $D$ to $D'$ through one of the involved
    intersections.  See the $t=0$ picture in
    Figure~\ref{figure:dual-torus-whitney-disk}.  Let $\mu$ and $\mu'$
    be meridians and $r$ and $s$ be the orders of $D$ and $D'$
    respectively.  Similarly to Case~1, we have $\mu \in G_{r+1}$ and
    $\mu'\in G_{s+1}$.  In addition, since $D''$ has order $r+s+1$,
    $\partial D'' \in G_{n-r-s}$ by
    Lemma~\ref{lemma:disk-boundary-in-lower-central-series}.

    \begin{figure}[H]
      \leavevmode
      \hbox{\begin{tikzpicture}
          [x=5bp,y=5bp,scale=.95,line width=1pt,double distance=1pt,join=round,
          over/.style={draw=white,double=black,double distance=1pt,line width=1.5pt},
          midover/.style={draw=white,double=black,double distance=.5pt,line width=1.5pt},
          dotted/.style={line width=.5pt,densely dotted},
          hidden/.style={text=blue,opacity=1},
          hidden/.style={text=white,opacity=0}
          ]
          \small
          \clip (3,-6) rectangle (25,19);
          \def\c[#1]#2{coordinate(#2) node[hidden,#1]{\footnotesize$#2$}}
          
          \draw (8,-3) node[anchor=east]{$D'$} -- ++(0,5) \c[right]{q0};
          \draw (20,-3) -- ++(0,5) \c[right]{p0};

          \draw[midover] ($(q0)+(-1,.5)$) ..controls+(-1,0)and+(-1,0).. ++(0,-1) coordinate[midway](d0)
          -- ($(p0)+(1,-.5)$) ..controls+(1,0)and+(1,0).. ++(0,1) coordinate[midway](c0) -- cycle;
          
          \draw[over] (q0) -- ++(0,3) \c[right]{q} -- ++(0,3) \c[right]{q1};
          \draw[over] (p0) -- ++(0,3) \c[right]{p} -- ++(0,3) \c[right]{p1};

          \draw[midover] ($(q1)+(-1,.5)$) ..controls+(-1,0)and+(-1,0).. ++(0,-1) coordinate[midway](d1)
          -- ($(p1)+(1,-.5)$) ..controls+(1,0)and+(1,0).. ++(0,1) coordinate[midway](c1) -- cycle;

          \draw[line width=.5pt] (c0)--(c1) (d0)--(d1) node[midway,anchor=east]{$C_j$};

          \draw[over] (p1) ..controls+(0,12)and+(0,12).. (q1);
          \draw ($(p)!0.5!(q)$) ++(0,-10) node{past ($t=-\epsilon$)};
        \end{tikzpicture}}
      \hbox{\begin{tikzpicture}
          [x=5bp,y=5bp,scale=.95,line width=1pt,double distance=1pt,join=round,
          over/.style={draw=white,double=black,double distance=1pt,line width=1.5pt},
          midover/.style={draw=white,double=black,double distance=.5pt,line width=1.5pt},
          dotted/.style={line width=.5pt,densely dotted,cap=round},
          hidden/.style={text=blue,opacity=1},
          hidden/.style={text=white,opacity=0}
          ]
          \small
          \clip (-1,-6) rectangle (29,19);
          \def\c[#1]#2{coordinate(#2) node[hidden,#1]{\footnotesize$#2$}}
          \draw (1,0) \c[below]{A} -- (0,0) -- ++(5,10)
          -- ++(23,0) -- ++(-5,-10) node[anchor=west]{$D$} -- ++(-1,0) \c[below]{B};
          
          \draw (8,-3) node[anchor=east]{$D'$} -- ++(0,5) \c[right]{q0};
          \draw (20,-3) -- ++(0,5) \c[right]{p0};
          \draw[dotted] ($(p0)+(2pt,-2pt)$) -- ++($(0,-5)+(0,2pt)$);
          
          \draw[midover] ($(q0)+(-1,.5)$) ..controls+(-1,0)and+(-1,0).. ++(0,-1)
          coordinate[midway](d0) -- ($(p0)+(1,-.5)$) ..controls+(1,0)and+(1,0).. ++(0,1) -- cycle;
          
          \draw[over] (q0) -- ++(0,3) \c[right]{q} -- ++(0,3) \c[right]{q1};
          \draw[over] (p0) -- ++(0,3) \c[right]{p} -- ++(0,3) \c[right]{p1};
          \draw[draw=white,line width=3pt] ($(p0)+(2pt,0)$) -- ++(0,3);
          \draw[dotted] ($(p0)+(2pt,0)$) -- ($(p)+(2pt,0)$);

          \draw[midover] ($(q1)+(-1,.5)$) ..controls+(-1,0)and+(-1,0).. ++(0,-1)
          coordinate[midway](d1) -- ($(p1)+(1,-.5)$) ..controls+(1,0)and+(1,0).. ++(0,1) -- cycle;

          \draw[over] (p1) ..controls+(0,12)and+(0,12).. (q1);

          \draw[fill] (q) circle(1.5pt) node[anchor=east]{$q$} --
          (p) \c[right]{p} circle(1.5pt) node[anchor=north east,xshift=.3ex,yshift=.2ex)]{$p$};

          \draw[dotted] ($(p)+(2pt,0)$) - ++(4,0) node[anchor=south]{$\gamma$};
          \draw[over] (A)--(B);

          \draw[thin,shorten >=2pt,->] (2,10) node[anchor=east,yshift=2pt]{$C_j$} -- (d0);
          \draw[thin,shorten >=2pt,->] (2,10) -- (d1);
          \draw ($(p)!0.5!(q)$) ++(0,9) node{$D''$};
          \draw ($(p)!0.5!(q)$) ++(0,-10) node{present ($t=0$)};
        \end{tikzpicture}}
      \hbox{\begin{tikzpicture}
          [x=5bp,y=5bp,scale=.95,line width=1pt,double distance=1pt,join=round,
          over/.style={draw=white,double=black,double distance=1pt,line width=1.5pt},
          midover/.style={draw=white,double=black,double distance=.5pt,line width=1.5pt},
          dotted/.style={line width=.5pt,densely dotted,cap=round},
          hidden/.style={text=blue,opacity=1},
          hidden/.style={text=white,opacity=0}
          ]
          \small
          \clip (3,-6) rectangle (25,19);
          \def\c[#1]#2{coordinate(#2) node[hidden,#1]{\footnotesize$#2$}}
          
          \draw (8,-3) node[anchor=east]{$D'$} -- ++(0,5) \c[right]{q0};
          \draw (20,-3) -- ++(0,5) \c[right]{p0};

          \draw[midover] ($(q0)+(-1,.5)$) ..controls+(-1,0)and+(-1,0).. ++(0,-1) coordinate[midway](d0)
          -- ($(p0)+(1,-.5)$) ..controls+(1,0)and+(1,0).. ++(0,1) coordinate[midway](c0) -- cycle;
          
          \draw[over] (q0) -- ++(0,3) \c[right]{q} -- ++(0,3) \c[right]{q1};
          \draw[over] (p0) -- ++(0,3) \c[right]{p} -- ++(0,3) \c[right]{p1};

          \draw[midover] ($(q1)+(-1,.5)$) ..controls+(-1,0)and+(-1,0).. ++(0,-1) coordinate[midway](d1)
          -- ($(p1)+(1,-.5)$) node[midway,anchor=north,yshift=1pt]{$\beta$}
          ..controls+(1,0)and+(1,0).. ++(0,1) coordinate[midway](c1) -- cycle;
          
          \draw[dotted] ($(q1)+(-1,.5)+(0,1pt)$) ..controls+(-1,0)and+(-1,0).. ++(0,-1)
          coordinate[midway](d2)
          -- ($(p1)+(1,-.5)+(0,1pt)$) ..controls+(1,0)and+(1,0).. ++(0,1)
          coordinate[midway](c2) -- cycle;
          
          \draw[line width=.5pt] (c0)--(c1) (d0)--(d1) node[midway,anchor=east]{$C_j$};

          \draw[dotted] (c2) ..controls+(0,14)and+(0,14).. (d2) coordinate[midway](t)
          node[pos=.75,anchor=south east,yshift=-2pt]{$R$} (t) arc(90:270:1.3 and 2);

          \draw[over] (p1) ..controls+(0,12)and+(0,12).. (q1);

          \draw[draw=white,line width=3pt] (t) ++(1.3,-2) arc(0:30:1.3 and 2) arc(30:-30:1.3 and 2);

          \draw[dotted] (t) arc(90:-90:1.3 and 2) arc(90:450:2) ;

          \draw ($(p)!0.5!(q)$) ++(0,-10) node{future ($t=\epsilon$)};
        \end{tikzpicture}}

      \caption{The torus $C_j$ dual to the arc $\gamma$ passing
        through a paired intersection~$p$.}
      \label{figure:dual-torus-whitney-disk}
    \end{figure}

    Let $C_j$ be the torus illustrated in
    Figure~\ref{figure:dual-torus-whitney-disk}; $C_j$ is the union of
    two annuli in the $t=\pm\epsilon$ pictures, and additional two
    annuli connecting the boundary circles of the former annuli
    through $-\epsilon\le t\le\epsilon$; the connecting annuli are
    shown as two circles in the $t=0$ picture.  It is straightforward
    to see that $C_j$ is dual to the arc~$\gamma$, similarly to the
    Clifford torus in Case~1.

    Let $\beta$ be the circle shown in the $t=\epsilon$ picture;
    $\beta$ is the top boundary of the annulus part of $C_j$ in the
    $t=\epsilon$ picture.  The meridian $\mu$ of $D$ and the circle
    $\beta$ are standard basis curves of~$C_j$.  Since $\beta$ is the
    boundary of the punctured torus $R$ illustrated with dotted lines
    in the $t=\epsilon$ picture, and since $\mu'$ and $\partial D''$
    are (homotopic to) standard basis curves of $R$, we have
    $\beta=[\mu',\partial D'']$.  Since $\mu' \in G_{s+1}$ and
    $\partial D''\in G_{n-r-s}$, we have $\beta\in G_{n-r+1}$.  Since
    $\mu\in G_{r+1}$, this shows that the standard basis curves $\mu$
    and $\beta$ of the torus $C_j$ satisfy the promised property.
    \qedhere
  \end{case-named}
\end{proof}

\begin{proof}[Proof of Theorem~\ref{theorem:milnor-theorem-rational-tower}]
  Let $G=\pi_1(X\sm T)$ where $T$ is a twisted Whitney tower of
  order $n$ in a rational homology 4-ball~$X$ bounded by an
  $m$-component link $L\subset \partial X$.  By
  Lemma~\ref{lemma:tower-exterior-homology}~(1), a given meridian map
  $F\to G$ induces an isomorphism
  $\Q^m\cong H_1(F;\Q) \cong H_1(G;\Q)$.

  By Lemma~\ref{lemma:tower-exterior-homology}~(2), $H_2(X\sm T;\Q)$
  is generated by classes $[C]$ of tori $C$ with standard basis curves
  $\alpha$, $\beta$ such that $\alpha\in G_k$ and $\beta\in G_{n-k+2}$
  for some~$k$.  By a standard argument (e.g., see \cite[(proofs of)
  Lemma~2.3 and Lemma~2.1]{Freedman-Teichner:1995-2}), such a toral
  class $[C]$ is contained in the kernel of
  $H_2(X\sm T) \to H_2(G/G_{n+1})$.  Since
  $H_2(X\sm T;\Q) \to H_2(G;\Q)$ is surjective, it follows that
  $H_2(G;\Q) \to H_2(G/G_{n+1};\Q)$ is zero.
  
  We now invoke Stallings-Dwyer theorem for rational
  coefficients~\cite{Stallings:1965-1, Dwyer:1975-1}: \emph{if a group
    homomorphism $\Gamma\to G$ induces an isomorphism on
    $H_1(\Gamma;\Q) \cong H_1(G;\Q)$ and an epimorphism
    \[
      H_2(\Gamma;\Q) \to H_2(G;\Q)/\Ker\{H_2(G;\Q) \rightarrow
      H_2(G/G_{k};\Q)\},
    \]
    then it induces an isomorphism}
    \[
      (\Gamma^\Q_{k}/\Gamma^\Q_{k+1}) \otimesover{\Z} \Q
      \xrightarrow{\cong} (G^\Q_k/G^\Q_{k+1}) \otimesover{\Z} \Q.
    \]

  Applying this to the meridian map $F\to G$, we obtain an
  isomorphism
  \[
    (F^\Q_{k+1}/F^\Q_{k+2})\otimesover{\Z}\Q \cong
    (G^\Q_{k+1}/G^\Q_{k+2}) \otimesover{\Z}\Q
  \]
  for each~$k\le n$.

  Therefore, to complete the proof of
  Theorem~\ref{theorem:milnor-theorem-rational-tower}, it suffices to
  show that $F^\Q_{k} = F_{k}$ for all~$k$.  It is straightforward to
  verify this by an induction: $F^\Q_1 = F = F_1$, and if
  $F^\Q_{k} = F_{k}$, then
  $F^\Q_k/[F,F^\Q_k] = F_k/F_{k+1}\cong \sL_k$ is a finitely
  generated free abelian group (e.g., by the Hall basis theorem), and
  so by definition, we have
  \[
    F^\Q_{k+1} = \Ker\{F_k \rightarrow (F_k/F_{k+1})\otimesover{\Z} \Q
    \} = \Ker\{F_k \rightarrow F_k/F_{k+1} \} = F_{k+1}.  \qedhere
  \]
\end{proof}

\subsection{Whitney towers and Milnor invariants}
\label{section:proof-vanishing-milnor-invariant}

Now we are ready to prove the main result of this section.

\begin{proof}[Proof of Theorem~\ref{theorem:whitney-tower-milnor-invariant}]
  Suppose $L$ is a framed link in $S^3$, $X$ is a rational homology
  4-ball with $\partial X=S^3$, and $T$ is a twisted Whitney tower of
  order $n$ in $X$ bounded by~$L$.  We will prove that $\mu_k(L)=0$
  for $k< n$ and $\mu_n(L) = \eta_n (t^\tw_n(T))$.

  Let $\pi=\pi_1(S^3\sm L)$, $G=\pi_1(X\sm T)$, and let
  $\lambda_i\in \pi$ be a pushoff of the $i$th component of~$L$ taken
  along the given framing.  (By
  Remark~\ref{remark:links-bounding-towers-in-QHD4}, $\lambda_i$ is a
  zero linking longitude if $n>0$.)  By
  Lemma~\ref{lemma:order-zero-disk-boundary}, the image of $\lambda_i$
  lies in $G_{n+1}$.

  We proceed inductively.  Suppose $k\le n$ and $\mu_{k-1}(L)$ has
  been shown to vanish.  (We assume nothing for $k=0$.)  Let $F$ be
  the free group of the same rank as the number of components of~$L$.
  By Milnor's theorem~\cite[Theorem~4]{Milnor:1957-1} (see
  Section~\ref{subsection:review-milnor-invariant}) and by
  Theorem~\ref{theorem:milnor-theorem-rational-tower}, we obtain the
  following commutative diagram with vertical arrows isomorphisms:
  \[
    \begin{tikzcd}
      \pi_{k+1}/\pi_{k+2} \ar[r] \ar[d, "\cong" left]&
      G_{k+1}/G_{k+2} \ar[r] &
      (G^\Q_{k+1}/G^\Q_{k+2}) \otimes \Q \ar[d, "\cong" right]
      \\
      F_{k+1}/F_{k+2} \ar[rr] &&
      (F_{k+1}/F_{k+2}) \otimes \Q      
    \end{tikzcd}
  \]
  Let $w_i\in F_{k+1}/F_{k+2}$ be the image of~$\lambda_i$. Then by
  definition,
  $\mu_k(L) = \sum_i X_i \otimes w_i \in \sL_1\otimes
  (F_{k+1}/F_{k+2}) \cong \sL_1\otimes \sL_{k+1}$.

  If $k\le n-1$, then since $\lambda_i$ is sent into
  $G_{n+1} \subset G_{k+2}$, it follows that the image of $\lambda_i$
  in $(F_{k+1}/F_{k+2}) \otimes \Q$ is trivial.  The bottom arrow of
  the diagram is a monomorphism since $F_{k+1}/F_{k+2}$ is torsion
  free abelian.  It follows that $w_i \in F_{k+1}/F_{k+2}$ is trivial.
  Therefore $\mu_k(L)=0$.

  If $k=n$, then the image of $\lambda_i$ in $G_{n+1}/G_{n+2}$ is
  given by Lemma~\ref{lemma:order-zero-disk-boundary}.  By comparing
  the longitude formula in Lemma~\ref{lemma:order-zero-disk-boundary}
  and the defining formula of $\eta_n$ in
  Definition~\ref{definition:tree-invariant-of-twisted-tower}, it
  follows that
  \[
    \mu_n(L)\otimes 1 = \eta_n(t^\tw_n(T)) \otimes 1 \in \sL_1
    \otimes (F_{n+1}/F_{n+2}) \otimes \Q = \sL_1 \otimes \sL_{n+1}
    \otimes \Q.
  \]
  Since $\sL_1\otimes\sL_{n+1} \to \sL_1\otimes\sL_{n+1}\otimes\Q$ is
  injective, $\mu_n(L) = \eta_n(t^\tw_n(T))$ in
  $\sL_1\otimes\sL_{n+1}$.
\end{proof}

As a consequence of
Theorem~\ref{theorem:whitney-tower-milnor-invariant}, we prove that
Milnor invariants are preserved under rational Whitney tower
concordance.  It will be used in
Section~\ref{section:framed-classification}.

\begin{corollary}
  \label{corollary:whitney-concordance-invariance-milnor-invariant}
  Suppose two framed links $L$ and $L'$ in $S^3$ are order $n+1$
  twisted Whitney tower concordant in a rational homology
  $S^3\times I$ $(n\ge 0)$.  If $L$ bounds a twisted Whitney tower of
  order $n$ in a rational homology 4-ball, then so does $L'$, and
  furthermore $\mu_n(L)=\mu_n(L')$.
\end{corollary}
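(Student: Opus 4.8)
The plan is to \emph{stack} the concordance on top of the given Whitney tower. Write $W$ for the rational homology $4$-ball with $\partial W=S^3$ carrying a twisted Whitney tower $T$ of order $n$ bounded by $L$, and write $V$ for the rational homology $S^3\times I$ carrying the order $n+1$ twisted Whitney tower concordance $C$, whose $i$th order $0$ annulus $C_i$ is cobounded by the $i$th component of $L'\subset S^3\times 1$ and that of $-L\subset S^3\times 0$. Form $X:=W\cup_{S^3}V$ by gluing $\partial W=S^3$ to $S^3\times 0\subset\partial V$. A routine Mayer--Vietoris computation (attaching a rational homology $S^3\times I$ along $S^3$ to the boundary of a rational homology $4$-ball does not change the rational homology) shows that $X$ is a rational homology $4$-ball with $\partial X=S^3\times 1=S^3$.

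Next I would assemble $T$ and $C$ into a single $2$-complex $T'$ in $X$. For each $i$, glue the $i$th order $0$ disk $D_i$ of $T$ to $C_i$ along $L_i$; the orientations are compatible because $C$ cobounds $L'$ and $-L$, and $D'_i:=D_i\cup_{L_i}C_i$ is an immersed disk in $X$ with $\partial D'_i=L'_i$. Take the $D'_i$ as the order $0$ disks of $T'$ and keep all Whitney disks of $T$ and of $C$. The gluing is along boundaries, so it creates no new intersection points, and it leaves unchanged the tree associated with any Whitney disk of $T$ or of $C$ (only the name of the order $0$ surface at which each univalent vertex sits changes, from $D_i$ or $C_i$ to $D'_i$); in particular the order of each sheet in $T'$ equals its order in $T$ or in $C$. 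Hence every unpaired intersection of $T'$ coming from $T$ has order $\ge n$ and every one coming from $C$ has order $\ge n+1$, so all intersections of $T'$ of order $<n$ are paired; and a Whitney disk of $T'$ of order $<n/2$ is framed, being framed in $T$ (order $<n/2$) or in $C$ (order $<n/2\le(n+1)/2$). Thus $T'$ is a twisted Whitney tower of order $n$ in the rational homology $4$-ball $X$ bounded by $L'$, which proves the first assertion. (If $T$ and $C$ are taken split and normalized as in the standing conventions, so is $T'$.)

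For the Milnor invariants I would invoke Theorem~\ref{theorem:whitney-tower-milnor-invariant}: by part~(1) applied to $T'$ we have $\mu_k(L')=0$ for $k<n$, so $\mu_n(L')$ is defined, and by part~(2), $\mu_n(L)=\eta_n(t^\tw_n(T))$ while $\mu_n(L')=\eta_n(t^\tw_n(T'))$. It then remains to observe that $t^\tw_n(T')=t^\tw_n(T)$ as formal sums of trees. The summands of $t^\tw_n(T')$ arise only from the order $n$ unpaired intersections of $T'$ and, when $n$ is even, from the twisted Whitney disks of $T'$ of order $n/2$. The contributions supported on the $T$-part are precisely the terms of $t^\tw_n(T)$, since the relevant trees, the signs $\epsilon(p)$, and the twistings $\omega(D)$ are untouched by the gluing (the twisting of $D'_i$ equals that of $D_i$ because $C_i$ carries the framing that identifies the framings of $L$ and $L'$). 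The $C$-part contributes nothing: $C$ has order $n+1$, so every intersection of $C$ of order $\le n$ is already paired, and $C$ has no twisted Whitney disk of order $n/2$ (when $n$ is even, $C$ has odd order $n+1$ and so, after normalization, is framed). Therefore $t^\tw_n(T')=t^\tw_n(T)$, whence $\mu_n(L')=\eta_n(t^\tw_n(T'))=\eta_n(t^\tw_n(T))=\mu_n(L)$.

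The main obstacle I anticipate is the bookkeeping in the second paragraph: one must check carefully that forming $T'$ preserves the order of every sheet and introduces no new low-order unpaired intersections, so that the order $n$ data on $T$ and the order $n+1$ data on $C$ combine into a genuine order $n$ tower, and---crucially for the last step---so that the formal sum $t^\tw_n$ is \emph{literally} unchanged rather than merely unchanged up to some equivalence. Once this is in hand, the statement is a direct consequence of Theorem~\ref{theorem:whitney-tower-milnor-invariant}.
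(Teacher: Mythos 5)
Your proof is correct and takes the same approach as the paper: stack the order $n$ tower $T$ with the concordance $C$ to get a twisted Whitney tower $T'$ of order $n$ bounded by $L'$ in a rational homology $4$-ball, apply Theorem~\ref{theorem:whitney-tower-milnor-invariant}, and observe that $t^\tw_n(T')=t^\tw_n(T)$ because all order $n$ intersections of $C$ are paired and (when $n$ is even) $C$, having odd order $n+1$, carries no $\tw$-disks of order $n/2$. Your account is substantially more detailed than the paper's very terse proof and in fact fixes a minor slip in the paper, which calls $T'$ a tower ``of order $n+1$'' even though the unpaired order-$n$ intersections of $T$ persist and $T'$ has order $n$, which is all that is needed.
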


\begin{remark}
  In Section~\ref{subsection:rational-whitney-filtration}, we will
  show that a link $L$ bounds a twisted Whitney tower of order $n$ in
  a rational homology 4-ball if and only if $\mu_k(L)=0$ for $k<n$.
  See Theorem~\ref{theorem:rational-triviality-characterization}.
\end{remark}

\begin{proof}[Proof of Corollary~\ref{corollary:whitney-concordance-invariance-milnor-invariant}]
  Let $T$ be a twisted Whitney tower of order $n$ bounded by $L$ in a
  rational homology 4-ball, and let $C$ be an order $n+1$ twisted
  Whitney tower concordance between $L$ and $L'$ in a rational
  homology $S^3\times I$.  Stacking $T$ and $C$, we obtain an order
  $n+1$ twisted Whitney tower $T'$ bounded by~$L'$ in another rational
  homology 4-ball.  By
  Theorem~\ref{theorem:whitney-tower-milnor-invariant},
  $\mu_k(L)=0=\mu_k(L')$ for $k<n$ and thus $\mu_n(L)$ and $\mu_n(L')$
  are well-defined.  Since all order $n$ intersections of $C$ are
  paired up by Whitney disks, we have $t^\tw_n(T) = t^\tw_n(T')$.  By
  Theorem~\ref{theorem:whitney-tower-milnor-invariant}, it follows
  that $\mu_n(L)=\mu_n(L')$.
\end{proof}

\section{Links and Whitney towers in rational homology 4-space}
\label{section:rational-theory-to-integral-theory}

In what follows we fix the number $m$ of components of links.  As in
the introduction, define $\ol\W^\tw_n$ to be the set of framed
$m$-component links $L$ in $S^3$ bounding a twisted Whitney tower $T$
of order $n$ in a rational homology 4-ball with boundary~$S^3$.
(Recall that $\ol\W_0^\tw$ is the set of all links in~$S^3$ by
Remark~\ref{remark:links-bounding-towers-in-QHD4}.)  Let
$\ol\sW^\tw_n$ to be the set of equivalence classes of links in
$\ol\W^\tw_n$ under order $n+1$ twisted Whitney tower concordance in a
rational homology $S^3\times I$.  (Readers may find that this is
different from the defining condition in the introduction, but we will
show that they are equivalent in
Section~\ref{subsection:twisted-graded-quotients},
Corollary~\ref{corollary:whitney-concordance-vs-band-sum}.)

In this section, we will show that Milnor invariants characterize
links in $\ol\W^\tw_n$.  Using this we will show that $\ol\sW^\tw_n$
is an abelian group under band sum, and compute the structure of the
abelian group~$\ol\sW^\tw_n$.


\subsection{Some results of the integral twisted theory}
\label{section:review-integral-theory}

Our approach relies in an essential way on the work of Conant,
Schneiderman and Teichner on Whitney tower concordance in
$S^3\times I$~\cite{Conant-Schneiderman-Teichner:2012-2,
  Conant-Schneiderman-Teichner:2012-3,
  Conant-Schneiderman-Teichner:2012-1,
  Conant-Schneiderman-Teichner:2012-4}.  In this subsection we quickly
review parts of their work we need, focusing on the twisted case, and
setup notations.

Let $\W_n^\tw$ be the set of $m$-component framed links in $S^3$
bounding a twisted Whitney tower of order $n$ in~$D^4$.
Let $\sW^\tw_n$ be the set of order $n+1$ twisted Whitney tower
concordance classes of links in~$\W^\tw_n$.  Then the band sum of two
classes is well defined on $\sW^\tw_n$, independent of the choice of
representative links and the choice of
bands~\cite[Lemma~3.4]{Conant-Schneiderman-Teichner:2012-2}.  The set
$\sW^\tw_n$ is an abelian group under band sum.  In particular, for
$L, L' \in \W^\tw_n$, $[L]=[L']$ in $\sW^\tw_n$ if and only if
$L\csumover{\beta}-L'$ is in~$\W^\tw_{n+1}$ for some~$\beta$.  Often
we write $\sW^\tw_n = \W^\tw_n / \W^\tw_{n+1}$.

For a twisted Whitney tower $T$ of order $n$, they define an invariant
$\tau^\tw_n(T) \in \cT^\tw_n$, which is the class of the formal sum
$t^\tw_n(T)$ described in
Definition~\ref{definition:tree-invariant-of-twisted-tower}, in a
certain quotient $\cT^\tw_n$ of the free abelian group generated by
order $n$ trees and order $\frac n2$
$\tw$-trees~\cite{Conant-Schneiderman-Teichner:2012-2}.  We do not
need the precise definition of~$\cT^\tw_n$.  A key feature we will use
is the following:

\begin{theorem}[Order Raising~\cite{Conant-Schneiderman-Teichner:2012-2}]
  \label{theorem:cst-order-rasing}
  If $L$ bounds a twisted Whitney tower $T$ of order $n$ in $D^4$ with
  $\tau^\tw_n(T)=0$ in $\cT^\tw_n$, then $L$ bounds a twisted Whitney
  tower of order $n+1$ in~$D^4$.
\end{theorem}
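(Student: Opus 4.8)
The plan is to follow the geometric strategy of Conant, Schneiderman and Teichner, which grows out of the geometric IHX construction of Schneiderman and Teichner~\cite{Schneiderman-Teichner:2004-1}: interpret each relation defining the quotient $\cT^\tw_n$ as a move on an order $n$ twisted Whitney tower that fixes the boundary link $L$ and keeps the tower of order $n$, use the hypothesis $\tau^\tw_n(T)=0$ to turn the formal sum $t^\tw_n(T)$ into the literal zero after finitely many such moves, and then cancel the remaining order $n$ intersections and order $n/2$ twistings geometrically so as to raise the order.

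First I would put $T$ in split form, so that each Whitney disk (other than the base disks) has a single interior intersection, or is a $\pm1$-twisted disk with no interior intersection, or has exactly two interior intersections that are paired by a next stage disk, using~\cite[Lemma~2.12]{Conant-Schneiderman-Teichner:2012-2}; then $t^\tw_n(T)$ is read off the tower with one generator per unpaired order $n$ intersection and one per order $n/2$ twisted disk. Next I would build the dictionary between the defining relations of $\cT^\tw_n$ and geometric moves: the antisymmetry relation is realized by reversing the orientation of a Whitney disk; the IHX (Jacobi) relation by the geometric IHX move, which trades an interior intersection carrying a tree $\mathrm{I}$ for two intersections carrying the trees $\mathrm{H}$ and $\mathrm{X}$ while creating only intersections of order $>n$; and the interior-twist, boundary-twist and twisted IHX relations by the corresponding local moves on twisted disks (a boundary twist trades a unit of twisting for an interior intersection of order $<n$, an interior twist trades two units of twisting for an order $n$ self-intersection, and so on). Since $D^4$ is simply connected every Whitney circle bounds an immersed disk, so each move can actually be carried out, and finger moves let us introduce canceling pairs of intersections whenever needed. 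As $\tau^\tw_n(T)=0$ means $t^\tw_n(T)$ lies in the subgroup of relators, applying the matching moves in sequence yields an order $n$ twisted Whitney tower $T'$ bounded by $L$ with $t^\tw_n(T')=0$ as a literal formal sum.

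With $t^\tw_n(T')=0$, the unpaired order $n$ intersections occur in oppositely signed pairs $p,q$ with the same decorated tree, and the order $n/2$ twisted disks occur (after a further splitting) in oppositely twisted pairs with the same tree. For each pair $p,q$, the Whitney circle joining them bounds an immersed disk in $D^4$; pushing it off the sheets gives an order $n+1$ Whitney disk $W$, and a boundary twist makes $W$ framed at the cost of one interior intersection of order $\ge n+1$ (its order is $n+1$ plus the order of a paired sheet); the other interior intersections of $W$ with sheets of the tower automatically have order $\ge n+1$ as well. An analogous tubing move cancels each oppositely twisted pair of order $n/2$ disks into a single framed Whitney disk, again introducing only order $>n$ intersection data. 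The outcome is a twisted Whitney tower of order $n+1$ bounded by $L$, as desired. I expect the main obstacle to be the geometric IHX move itself — realizing the Jacobi relation by an honest manipulation of the tower without creating any intersection of order $\le n$ — together with the combinatorial check that the relations built into $\cT^\tw_n$ are exactly those that are geometrically realizable in this fashion; this is the technical core of~\cite{Conant-Schneiderman-Teichner:2012-2} and~\cite{Schneiderman-Teichner:2004-1}.
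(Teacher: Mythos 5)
The paper does not prove this theorem: it is stated in Section~4.1 purely as a cited result from Conant--Schneiderman--Teichner, as the bracketed attribution in the theorem header and the surrounding text (``In this subsection we quickly review parts of their work we need\dots A key feature we will use is the following'') make explicit. So there is no proof in this paper to compare your proposal against.

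That said, your sketch is a fair high-level description of the Conant--Schneiderman--Teichner strategy: split the tower, interpret the relations defining $\cT^\tw_n$ (antisymmetry, IHX, interior/boundary twist, twisted IHX) as geometric moves that preserve the order and the boundary link, and use the hypothesis $\tau^\tw_n(T)=0$ to reduce to a tower whose order $n$ intersections cancel in oppositely signed pairs with matching trees, which can then be paired by new order $n+1$ Whitney disks (with boundary twisting to fix framings, at the cost of only order $\ge n+1$ intersections). But two things should be flagged. First, as you note yourself, the entire technical burden is in realizing the geometric IHX move without introducing intersections of order $\le n$ and in verifying that the dictionary between relators and moves is complete; your sketch simply names that as the core and defers to~\cite{Schneiderman-Teichner:2004-1} and~\cite{Conant-Schneiderman-Teichner:2012-2}. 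Second, the intermediate claim that one can literally achieve $t^\tw_n(T')=0$ as a formal sum by finitely many moves is itself a nontrivial statement that depends on exactly which relations of $\cT^\tw_n$ are geometrically realizable, and in their write-up CST organize the argument around their ``geometric obstruction theory'' rather than by first normalizing the formal sum to zero. So your proposal is a useful roadmap through the citation, but it is not a self-contained proof and should not be presented as one; in the context of this paper the correct move is simply to cite, as the author does.
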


Any $\theta\in \cT^\tw_n$ is realized by a link in the following
sense: there is an epimorphism
$R^\tw_n\colon \cT^\tw_n \to \sW^\tw_n$, called the \emph{realization
  map}, such that $R^\tw_n(\theta)$ is the class of a link bounding an
order $n$ twisted Whitney tower $T$ in $D^4$ with
$\tau^\tw_n(T)=\theta$ \cite{Conant-Schneiderman-Teichner:2012-2}.
(This condition determines the class $R^\tw_n(\theta)\in \sW^\tw_n$
uniquely by Theorem~\ref{theorem:cst-order-rasing}.)

The summation $\eta_n$ described in
Definition~\ref{definition:summation-map} induces a homomorphism
$\eta_n\colon \cT^\tw_n \to \sD_n$ (e.g. see
\cite[Section~4.3]{Conant-Schneiderman-Teichner:2012-3}).  Also, the
Milnor invariant of order $n$ gives rise to a homomorphism
$\mu_n\colon \sW^\tw_n \to
\sD_n$~\cite{Conant-Schneiderman-Teichner:2012-3}.  We state some
necessary facts as a theorem.

\begin{theorem}[Conant-Schneiderman-Teichner
  \cite{Conant-Schneiderman-Teichner:2012-2,
    Conant-Schneiderman-Teichner:2012-3,
    Conant-Schneiderman-Teichner:2012-1,
    Conant-Schneiderman-Teichner:2012-4}]
  \label{theorem:cst-integral-theory}
  \leavevmode\Nopagebreak
  \begin{enumerate}
  \item\label{item:kernel-of-mu} For $n\not\equiv 2 \mod 4$,
    $\eta_n\colon \cT^\tw_n \to \sD_n$ and
    $\mu_n\colon \sW^\tw_n\to \sD_n$ are isomorphisms.  For $n=4k-2$,
    $\eta_{4k-2}\colon \cT^\tw_{4k-2} \to \sD_{4k-2}$ is an
    epimorphism with kernel isomorphic to $\Z_2\otimes \sL_{k}$.
  \item\label{item:realization-of-kernel-of-mu} For each $\theta$ in
    $\Ker\{\eta_{4k-2}\colon \cT^\tw_{4k-2} \to \sD_{4k-2}\}$,
    $R_n^\tw(\theta)\in \sW^\tw_{4k-2}$ is the class of a link
    obtained by starting with the figure eight knot, applying Bing
    doubling to certain components repeatedly, and then applying
    internal band sum operations connecting distinct components.
  \end{enumerate}
\end{theorem}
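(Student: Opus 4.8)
The plan is to assemble the statement from the work of Conant, Schneiderman and Teichner, using the Milnor invariant computation of Theorem~\ref{theorem:whitney-tower-milnor-invariant} as the bridge between the algebraic group $\cT^\tw_n$ and the geometric group $\sW^\tw_n$; no new geometric construction is needed, and the content is to organize the cited inputs and chase one commutative triangle. First I would quote the purely algebraic computation of $\eta_n\colon\cT^\tw_n\to\sD_n$: the group $\cT^\tw_n$ is the free abelian group on order $n$ trees and order $n/2$ $\tw$-trees modulo the antisymmetry, Jacobi, interior-twist and boundary-twist relations, and Conant, Schneiderman and Teichner, via their resolution of Levine's conjecture, show that $\eta_n$ is an isomorphism when $n\not\equiv 2\mod 4$ and is an epimorphism with kernel $\Z_2\otimes\sL_k$ when $n=4k-2$. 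This is exactly the $\eta_n$ half of part~(1); I would cite \cite{Conant-Schneiderman-Teichner:2012-2,Conant-Schneiderman-Teichner:2012-1} and say no more about it. This is the only deep point, and it is the main obstacle---one we are entitled to take as given here.

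To get the $\mu_n$ half of part~(1), I would establish the identity $\mu_n\circ R^\tw_n=\eta_n\colon\cT^\tw_n\to\sD_n$; this is where Theorem~\ref{theorem:whitney-tower-milnor-invariant} is used. Given $\theta\in\cT^\tw_n$, choose a link $L$ with $[L]=R^\tw_n(\theta)$ bounding an order $n$ twisted Whitney tower $T$ in $D^4$ (a rational homology $4$-ball) with $\tau^\tw_n(T)=\theta$; then Theorem~\ref{theorem:whitney-tower-milnor-invariant}(2) gives $\mu_n(L)=\eta_n(t^\tw_n(T))$, and since $\eta_n$ is defined on the quotient $\cT^\tw_n$ the right-hand side equals $\eta_n(\tau^\tw_n(T))=\eta_n(\theta)$. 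Now for $n\not\equiv2\mod4$: injectivity of $\eta_n$ forces the epimorphism $R^\tw_n$ to be injective, hence an isomorphism, so $\mu_n=\eta_n\circ(R^\tw_n)^{-1}$ is an isomorphism. (Surjectivity of $R^\tw_n$ and the Order Raising Theorem underpinning its well-definedness are the cited facts recalled earlier in this subsection.)

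For part~(2), by part~(1) the kernel of $\eta_{4k-2}$ is $\Z_2\otimes\sL_k$, and Conant, Schneiderman and Teichner exhibit an explicit family of links realizing these classes via $R^\tw_{4k-2}$: one starts with the figure eight knot (the source of the classical Arf invariant and of $\Arf_k$ in the iterated construction), applies iterated Bing doubling to selected components to build up the bracketing recorded by $\sL_k$ in the component structure, and then performs internal band sums joining distinct components to obtain the $m$-component labelling. I would cite \cite{Conant-Schneiderman-Teichner:2012-2} for this construction and make no modification. In short: quote the $\eta_n$ computation, deduce the $\mu_n$ statement from the triangle above together with Theorem~\ref{theorem:whitney-tower-milnor-invariant} and surjectivity of $R^\tw_n$, and quote the explicit kernel realization.
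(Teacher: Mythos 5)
The paper offers no proof of this theorem: it is stated as a direct quotation of Conant--Schneiderman--Teichner's results, with only citations attached. There is therefore nothing to compare at the level of detail you supply. That said, your reconstruction is logically sound and is consistent with how the surrounding material is used: the algebraic computation of $\eta_n$ and the kernel realization are quoted verbatim, and your derivation of the $\mu_n$ half from the commuting triangle $\mu_n\circ R^\tw_n=\eta_n$ (forcing the epimorphism $R^\tw_n$ to be injective when $\eta_n$ is, hence an isomorphism, hence $\mu_n=\eta_n\circ(R^\tw_n)^{-1}$) is exactly the chase one needs.

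One small remark on sourcing: you invoke the paper's own Theorem~\ref{theorem:whitney-tower-milnor-invariant} to supply $\mu_n(L)=\eta_n(t^\tw_n(T))$. This is fine and not circular (Theorem~\ref{theorem:whitney-tower-milnor-invariant} is proved earlier and independently), but for a tower $T$ sitting in $D^4$ the identity is already the CST result \cite[Theorem~6]{Conant-Schneiderman-Teichner:2012-3}, of which the paper's theorem is the rational generalization. Since Theorem~\ref{theorem:cst-integral-theory} is a statement entirely about the integral theory, it is cleaner to cite the $D^4$ special case directly, which is presumably what CST do. With that cosmetic change your proposal is a faithful assembly of the cited inputs.
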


The main conjecture is that $R^\tw_n$ is an isomorphism
$\cT^\tw_n\cong\sW^\tw_n$ for $n\equiv 2 \mod 4$.  This is equivalent
to the higher order Arf invariant
conjecture~\cite{Conant-Schneiderman-Teichner:2012-2}.


\subsection{Rational twisted Whitney tower filtration}
\label{subsection:rational-whitney-filtration}


In our characterization of links in $\ol\W^\tw_n$, the following
straightforward observation based on earlier known facts is essential.
Let
$\sB_{4k-2} := \Ker\{ \eta_{4k-2}\colon \cT^\tw_{4k-2} \to
\sD_{4k-2}\}$.  As stated in
Theorem~\ref{theorem:cst-integral-theory}~(\ref{item:kernel-of-mu}),
$\sB_{4k-2} \cong \Z_2\otimes \sL_{k}$.  We say that a link is
\emph{rationally slice} if it bounds slicing disks in a rational
homology 4-ball.  When $R$ is a ring, a link is \emph{$R$-slice} if it
bounds slicing disks in an $R$-homology 4-ball.

\begin{lemma}
  \label{lemma:kernel-realization-by-rationally-slice-links}
  For any $\theta\in \sB_{4k-2}$, the realization
  $R^\tw_{4k-2}(\theta) \in \sW^\tw_{4k-2}$ is represented by a
  $\Z[\frac12]$-slice link $L(\theta) \in \W^\tw_{4k-2}$.
\end{lemma}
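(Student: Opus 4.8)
The plan is to combine the explicit description of the realizing links in Theorem~\ref{theorem:cst-integral-theory}(\ref{item:realization-of-kernel-of-mu}) with the known fact that the figure eight knot bounds a slicing disk in a $\Z[\tfrac12]$-homology $4$-ball, and to observe that the two geometric operations used to produce those links preserve $\Z[\tfrac12]$-sliceness. By Theorem~\ref{theorem:cst-integral-theory}(\ref{item:realization-of-kernel-of-mu}), applied to $\theta\in\sB_{4k-2}$, the class $R^\tw_{4k-2}(\theta)$ is represented by a link $L(\theta)\in\W^\tw_{4k-2}$ obtained from the figure eight knot by repeatedly Bing doubling certain components and then performing internal band sums joining distinct components. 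The membership $L(\theta)\in\W^\tw_{4k-2}$ and the fact that $L(\theta)$ represents $R^\tw_{4k-2}(\theta)$ are part of the realization statement, and by Remark~\ref{remark:links-bounding-towers-in-QHD4}(2) the link $L(\theta)$ is automatically zero framed with vanishing pairwise linking numbers. So it suffices to produce a system of disjoint slicing disks for $L(\theta)$ in some $\Z[\tfrac12]$-homology $4$-ball with boundary $S^3$.

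Fix, using~\cite{Cha:2003-1} (and checking that the ambient $4$-manifold in that construction is indeed a $\Z[\tfrac12]$-homology ball), a $\Z[\tfrac12]$-homology $4$-ball $V$ with $\partial V=S^3$ together with a properly embedded slicing disk $D\subset V$ for the figure eight knot. I would then prove, by induction along the construction of $L(\theta)$, that every link occurring bounds a system of disjoint properly embedded slicing disks in this same $V$. \emph{Bing doubling.} If a component $K$ of an intermediate link bounds a properly embedded slicing disk $\Delta$ in $V$ (in particular inducing the zero framing on $K$) disjoint from the disks bounding the other components, pick a tubular neighborhood $\nu\Delta\cong D^4$ so thin that it misses those other disks; then $\nu\Delta\cap\partial V$ is a tubular neighborhood $\nu K\subset S^3$ of $K$ realizing its zero framing, and the Bing double of $K$, which lies in $\nu K\subset\partial(\nu\Delta)$, is thereby identified with the Bing double of the core of a standardly embedded solid torus in $\partial D^4=S^3$, i.e.\ with the Bing double of an unknot. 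Since that link is slice, the Bing double of $K$ bounds two disjoint properly embedded slicing disks in $\nu\Delta\subset V$, disjoint from the (unchanged) disks bounding the remaining components; iterating allows arbitrarily chosen components to be Bing doubled as many times as needed.

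\emph{Internal band sum.} If components $K_i$ and $K_j$ bound disjoint properly embedded slicing disks $\Delta_i$ and $\Delta_j$ in $V$, disjoint from all the other disks, and $\beta\subset S^3$ is a band joining $K_i$ to $K_j$, push the interior of $\beta$ into a collar $S^3\times[0,\varepsilon)\subset V$ to obtain an embedded band $\widehat\beta$ in $V$ attached to $\Delta_i$ and $\Delta_j$ along their boundaries and otherwise disjoint from every disk; then $\Delta_i\cup\widehat\beta\cup\Delta_j$ is a properly embedded slicing disk for $K_i\csumover{\beta}K_j$, and the other disks are untouched. Running these two steps all the way along the construction produces a system of disjoint slicing disks for $L(\theta)$ in $V$, so $L(\theta)$ is $\Z[\tfrac12]$-slice, which is what we want.

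I do not expect a serious obstacle here: the argument is elementary tubular-neighborhood and collar bookkeeping resting on just two inputs, namely the explicit form of the realizing links from Theorem~\ref{theorem:cst-integral-theory}(\ref{item:realization-of-kernel-of-mu}) and the $\Z[\tfrac12]$-sliceness of the figure eight knot from~\cite{Cha:2003-1}. The one point that needs care is the Bing-doubling step — the observation that inside a thin tubular neighborhood of a slicing disk a Bing double becomes the (slice) Bing double of an unknot, together with the verification that the two new disks can be kept disjoint from all the disks bounding the other components — which is precisely the mechanism by which the classical fact that Bing doubling a slice knot yields a slice link persists over a $\Z[\tfrac12]$-homology ball.
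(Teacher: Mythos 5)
Your proposal is correct and follows essentially the same route as the paper: it invokes Theorem~\ref{theorem:cst-integral-theory}(\ref{item:realization-of-kernel-of-mu}) to describe $L(\theta)$, uses the $\Z[\tfrac12]$-sliceness of the figure eight knot from~\cite{Cha:2003-1}, and then observes that Bing doubling and internal band sums preserve the property of bounding disjoint slice disks in a fixed 4-manifold. The only difference is that the paper simply states the last observation as a standard fact, while you spell out the tubular-neighborhood and collar arguments; both are sound.
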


\begin{proof}
  The figure eight knot bounds a slice disk in a rational
  $\Z[\frac12]$-homology 4-ball, by \cite[Proof of Theorem~4.16,
  Figure~6]{Cha:2003-1}.  If a link bounds slice disks in a
  4-manifold, both Bing doubling operation on a component and internal
  band sum operation joining distinct components give another link
  bounding slice disks in the same 4-manifold.  From this and
  Theorem~\ref{theorem:cst-integral-theory}~(\ref{item:realization-of-kernel-of-mu}),
  the conclusion stated above follows.
\end{proof}

For brevity, we write
$\sB_n := \Ker\{ \eta_{n}\colon \cT^\tw_{n} \to \sD_{n}\}$ for
any~$n$; for $n\not\equiv 2\mod 4$, $\sB_n=0$ by
Theorem~\ref{theorem:cst-integral-theory}~(\ref{item:kernel-of-mu}),
and thus
Lemma~\ref{lemma:kernel-realization-by-rationally-slice-links} is
vacuously true.

For two $m$-component links $L$ and $L'$ in $S^3$, we denote by
$L\csumover{\beta} L'$ their band sum defined using a collection
$\beta$ of $m$ bands joining the $i$th component of $L$ and that of
$L'$ in the split union $L\sqcup L'$.  That is, $L\csumover{\beta} L'$
is the result of $m$ internal band sum operations (ambient surgery) on
$L\sqcup L'$.

We will often use that there is a standard genus zero cobordism in
$S^3\times I$ between $(L\sqcup L')\times 0 \subset S^3\times 0$ and
$(L\csumover{\beta}L')\times 1 \subset S^3\times 1$:
\[
  \textstyle \big((L\sqcup L')\times [0,\frac12]\big) \cup
  (\beta\times\frac12) \cup
  \big((L\csumover{\beta}L')\times[\frac12,1]\big).
\]


\begin{lemma}
  \label{lemma:rational-vs-integral-towers-for-link}
  Suppose $L$ is a link bounding a twisted Whitney tower of order $n$
  in~$D^4$.  Then the following are equivalent:
  \begin{enumerate}
  \item There is $\theta \in \sB_n$ such that a band sum
    $L\csumover{\beta} L(\theta)$ bounds a twisted Whitney tower of
    order $n+1$ in~$D^4$ for any~$\beta$.  Here $L(\theta)$ is the
    link in
    Lemma~\ref{lemma:kernel-realization-by-rationally-slice-links}.
  \item $L$ bounds a twisted Whitney tower of order $n+1$ in a
    $\Z[\frac12]$-homology 4-ball.
  \item $L$ bounds a twisted Whitney tower of order $n+1$ in a
    rational homology 4-ball.
  \item $\mu_n(L)=0$.
  \end{enumerate}
\end{lemma}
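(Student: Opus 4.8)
The plan is to prove the cycle of implications $(4)\Rightarrow(1)\Rightarrow(2)\Rightarrow(3)\Rightarrow(4)$. The two easy links come first. For $(2)\Rightarrow(3)$, a $\Z[\tfrac12]$-homology $4$-ball is a $\Q$-homology $4$-ball (tensor its $\Z[\tfrac12]$-homology with the flat $\Z[\tfrac12]$-module $\Q$), so the same twisted Whitney tower serves. For $(3)\Rightarrow(4)$, apply Theorem~\ref{theorem:whitney-tower-milnor-invariant}(1) to an order $n+1$ twisted Whitney tower for $L$ in a rational homology $4$-ball: it yields $\mu_j(L)=0$ for all $j\le n$, in particular $\mu_n(L)=0$. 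Here $\mu_n(L)$ is well defined a priori because, by the standing hypothesis, $L$ already bounds an order $n$ twisted Whitney tower in $D^4$, so $\mu_j(L)=0$ for $j<n$ by the same theorem.

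For $(4)\Rightarrow(1)$ I would begin with an order $n$ twisted Whitney tower $T$ for $L$ in $D^4$, furnished by the standing hypothesis, and consider $\tau^\tw_n(T)=[t^\tw_n(T)]\in\cT^\tw_n$. By Theorem~\ref{theorem:whitney-tower-milnor-invariant}(2) applied with $D^4$ in the role of the rational homology $4$-ball, and using that $\eta_n$ descends to $\cT^\tw_n$, we get $\eta_n(\tau^\tw_n(T))=\mu_n(L)=0$, hence $\tau^\tw_n(T)\in\sB_n$. Set $\theta:=-\tau^\tw_n(T)\in\sB_n$ and take the link $L(\theta)$ of Lemma~\ref{lemma:kernel-realization-by-rationally-slice-links}, which bounds an order $n$ twisted Whitney tower $T_\theta$ in $D^4$ with $\tau^\tw_n(T_\theta)=\theta$. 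For any collection of bands $\beta$, stack the genus-zero band trace on $T\sqcup T_\theta$ inside $D^4$; this produces an order $n$ twisted Whitney tower for $L\csumover{\beta}L(\theta)$ whose order-zero surfaces are again disks (a disk of $T$ and a disk of $T_\theta$ joined by a band), and since the band trace contributes neither unpaired order $n$ intersections nor order $n/2$ twisted disks, the associated formal sum is literally $t^\tw_n(T)+t^\tw_n(T_\theta)$, so its invariant equals $\tau^\tw_n(T)+\theta=0$ regardless of $\beta$. The Order Raising Theorem~\ref{theorem:cst-order-rasing} then upgrades this to an order $n+1$ twisted Whitney tower for $L\csumover{\beta}L(\theta)$ in $D^4$, which is exactly statement~(1).

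The geometric core, and the step I expect to be the main obstacle, is $(1)\Rightarrow(2)$. Fix $\theta\in\sB_n$ and bands $\beta$ as in~(1), so that $L\csumover{\beta}L(\theta)$ bounds an order $n+1$ twisted Whitney tower $W$ in $D^4$, and recall that $L(\theta)$ is $\Z[\tfrac12]$-slice by Lemma~\ref{lemma:kernel-realization-by-rationally-slice-links} (it is an unlink when $n\not\equiv2\bmod4$, since then $\sB_n=0$), hence so is its reverse mirror $-L(\theta)$, bounding slice disks $\Delta'$ in a $\Z[\tfrac12]$-homology $4$-ball $B'$. Form the boundary connected sum $M_0:=D^4\natural B'$, a $\Z[\tfrac12]$-homology $4$-ball whose boundary $S^3$ carries the split link $(L\csumover{\beta}L(\theta))\sqcup(-L(\theta))$, which bounds the order $n+1$ twisted Whitney tower $W\sqcup\Delta'$. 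Inside $S^3$ there is an embedded cobordism down to $L$: band the $i$th component $L^i\csum L(\theta)^i$ of $L\csumover{\beta}L(\theta)$ to the $i$th component of $-L(\theta)$, obtaining $L^i\csum K^i$ where $K^i$ is $L(\theta)^i$ connect-summed with its reverse mirror image, hence a slice knot; then, since a slice knot is null-concordant, concord $L^i\csum K^i$ back to $L^i$. Attach a collar $S^3\times I$ carrying this cobordism to $M_0$ along its boundary to obtain a $\Z[\tfrac12]$-homology $4$-ball $M$ with $\partial M=S^3\supset L$. Finally, $W\sqcup\Delta'$ together with this cobordism is an order $n+1$ twisted Whitney tower for $L$ in $M$ that is union-of-disks-like: each order-zero surface is a pair of pants (coming from the cobordism) two of whose three boundary circles are capped off, one by an order-zero disk of $W$ and one by a disk of $\Delta'$, hence a disk, while all Whitney disks and higher intersections are inherited unchanged from $W$, the remaining pieces being embedded and meeting $W$ only along the gluing circles. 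The genuinely delicate points here are exactly the bookkeeping that the assembled complex is union-of-disks-like and of order $n+1$, and the verification that the boundary-connected-sum and collar operations keep the ambient $4$-manifold a $\Z[\tfrac12]$-homology ball; everything else is formal.
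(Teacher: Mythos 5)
Your proof is correct and follows the same overall strategy as the paper: the cycle $(4)\Rightarrow(1)\Rightarrow(2)\Rightarrow(3)\Rightarrow(4)$, with $(2)\Rightarrow(3)$ and $(3)\Rightarrow(4)$ immediate, $(4)\Rightarrow(1)$ via order raising applied to the band sum with $L(\theta)$, and $(1)\Rightarrow(2)$ by exploiting the $\Z[\frac12]$-sliceness of $L(\theta)$. Your $(4)\Rightarrow(1)$ matches the paper's argument line by line. Your $(1)\Rightarrow(2)$ is a small but genuine variant: the paper first attaches the band-trace cobordism and reinterprets the resulting tower, via the identification $D^4\cong\ol{S^3\sm D^3}\times I$, as an order $n+1$ twisted Whitney tower \emph{concordance} between $L$ and $L(\theta)$ in $S^3\times I$, and only then caps the $L(\theta)$-end with the slice disks $\Delta'$; you instead form the boundary connected sum $D^4\natural B'$ at the outset, so that $(L\csumover{\beta}L(\theta))\sqcup(-L(\theta))$ bounds $W\sqcup\Delta'$, and then attach a genus-zero cobordism in a collar reducing this $2m$-component link to $L$. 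Your cobordism needs the extra observation that each $L(\theta)^i\csum -L(\theta)^i$ is slice so it can be null-concorded away, a step the paper's re-decomposition avoids; in exchange you bypass the $D^4\cong\ol{S^3\sm D^3}\times I$ trick entirely. Both constructions yield the same ambient manifold (diffeomorphic to $B'$), and the bookkeeping you flag as delicate — that the assembled complex is union-of-disks-like of order $n+1$, and that the ambient manifold stays a $\Z[\frac12]$-homology 4-ball — does go through as you describe.
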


Note that for $n\not\equiv 2\mod 4$, (1) is equivalent to that $L$
bounds a twisted Whitney tower of order $n+1$ in~$D^4$.

\begin{proof}
  Suppose $L\csumover{\beta}L(\theta)$ bounds an order $n+1$ twisted
  Whitney tower $T$ in $D^4$ as in~(1).  Then a standard argument
  gives an order $n+1$ twisted Whitney tower concordance in
  $S^3\times I$, say $T'$, between $L$ and~$L(\theta)$.  Details are
  as follows: first attach to $T$ a standard genus zero cobordism
  between $(L\csumover{\beta}L(\theta)) \times 1$ and the split union
  $(L \sqcup L(\theta)) \times 0$ in $S^3\times I$.  This gives a
  tower $T''$ in $D^4$ bounded by $L \sqcup L(\theta)$.  Identify
  $D^4$ with $\ol{S^3 \sm D^3}\times I$ in such a way that $L$ and
  $L(\theta)$ lie in the first and second summands of
  $\partial(\ol{S^3 \sm D^3}\times I)= \ol{S^3 \sm D^3}
  \cupover{\partial} -\ol{S^3 \sm D^3} = S^3\csum -S^3$ respectively.
  Then the promised $T'\subset S^3\times I$ is the image of $T''$
  under the inclusion $\ol{S^3 \sm D^3}\times I \subset S^3\times I$.
  
  Attach to $T'$ a slicing disk of $-L(\theta)$ in a
  $\Z[\frac12]$-homology 4-ball, which exists by
  Lemma~\ref{lemma:kernel-realization-by-rationally-slice-links}.  The
  result is a twisted Whitney tower of order $n+1$ in a
  $\Z[\frac12]$-homology 4-ball which is bounded by~$L$.  This shows
  (1)~$\Rightarrow$~(2).

  (2)~$\Rightarrow$~(3) is trivial.  (3)~$\Rightarrow$~(4) is an
  immediate consequence of
  Theorem~\ref{theorem:whitney-tower-milnor-invariant}.
  
  Suppose (4) holds.  Choose a twisted Whitney tower $T$ of order $n$
  in $D^4$ bounded by~$L$.  Then $\eta_n(\tau^\tw_n(T)) = \mu_n(L)=0$
  by using Theorem~\ref{theorem:whitney-tower-milnor-invariant} (or
  the original integral
  version~\cite[Theorem~6]{Conant-Schneiderman-Teichner:2012-3}).
  Therefore $\tau^\tw_n(T)\in \sB_{4k-2}$.  Let
  $\theta:=-\tau^\tw_n(T)$.  Then $L(\theta)$ bounds a twisted Whitney
  tower $T''$ in $D^4$ with $\tau^\tw_n(T'')=-\tau^\tw_n(T)$.  Attach
  the disjoint union of $T$ and $T''$ to a standard genus zero
  cobordism between $L\csumover\beta L(\theta)$ and
  $L\sqcup L(\theta)$, to obtain an order $n$ twisted Whitney tower
  with $\tau^\tw_n = \tau^\tw_n(T)+\tau^\tw_n(T'') = 0$.  By
  Theorem~\ref{theorem:cst-order-rasing}, it follows that
  $L\csumover{\beta}L(\theta)$ lies in~$\W^\tw_{n+1}$.  This shows
  (4)~$\Rightarrow$~(1).
\end{proof}

We will use connected sum of links as a special case of band sum.  A
precise description is as follows.  Let $L$ be a link with $m$
components in~$S^3$.  Fix $m$ distinct interior points
$z_1,\ldots,z_m \in D^2$.  Choose an embedding
$b\colon D^2\times I \to S^3$ such that the inverse image of the $i$th
component of $L$ under $b$ is equal, as an oriented arc,
to~$z_i\times I$.  We call $b$ a \emph{basing} for~$L$.  Let $L'$ be
another $m$-component link with a basing~$b'$.  Let
$Y=\ol{S^3\sm b(D^2\times I)}$ and $Y'=\ol{S^3\sm b'(D^2\times I)}$.
Define the \emph{connected sum} $L\csumover{(b,b')} L'$ by
$(S^3,L\csumover{(b,b')} L') = (Y,L\cap Y) \cupover{\partial}
(Y',L\cap Y')$ where $\partial Y$ is identified with $\partial Y'$
under $b(z,t) \mapsto b'(z,1-t)$, $(z,t)\in \partial(D^2\times I)$.
That is, the connected sum $L\csumover{(b,b')} L'$ is the band sum
defined using the pair of basings $(b,b')$ as bands.

Now we are ready to present a complete characterization of links
bounding a twisted Whitney tower of a given order in a rational and
$\Z[\frac12]$-homology 4-ball.

\begin{theorem}
  \label{theorem:rational-triviality-characterization}
  For any link $L$ in $S^3$ and $n\ge 0$, the following are
  equivalent:
  \begin{enumerate}
  \item $L \in \ol\W^\tw_{n+1}$, that is, $L$ bounds a twisted
    Whitney tower of order $n+1$ in a rational homology 4-ball.
  \item $\mu_k(L)=0$ for $k\le n$.
  \item For any basing $b$ for $L$, there is a rationally slice link
    $L_0$ with a basing $b_0$ such that
    $L\csumover{(b,b_0)}L_0 \in \W^\tw_{n+1}$.
  \item For any basing $b$ for $L$, there is a $\Z[\frac12]$-slice
    link $L_0$ with a basing $b_0$ such that
    $L\csumover{(b,b_0)}L_0 \in \W^\tw_{n+1}$.
  \item $L$ bounds a twisted Whitney tower of order $n+1$ in a
    $\Z[\frac12]$-homology 4-ball.
  \end{enumerate}
\end{theorem}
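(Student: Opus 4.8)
The plan is to prove the five statements equivalent by closing a cycle of implications, all of which are short except one, namely $(2)\Rightarrow(4)$, which I would establish by induction on $n$ with Lemma~\ref{lemma:rational-vs-integral-towers-for-link} as the engine.

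First I would dispose of the easy implications. Since a $\Z[\tfrac12]$-homology $4$-ball is in particular a rational homology $4$-ball and a $\Z[\tfrac12]$-slice link is rationally slice, $(5)\Rightarrow(1)$ and $(4)\Rightarrow(3)$ are immediate, and $(1)\Rightarrow(2)$ is exactly Theorem~\ref{theorem:whitney-tower-milnor-invariant}(1) applied to an order $n+1$ twisted Whitney tower in a rational homology $4$-ball. For $(3)\Rightarrow(1)$---and, verbatim, for $(4)\Rightarrow(5)$---I would fix a basing $b$, take $L_0,b_0$ as in $(3)$, and reuse the ``cap off'' construction from the proof of Lemma~\ref{lemma:rational-vs-integral-towers-for-link}: starting from the order $n+1$ twisted Whitney tower bounded by $L\csumover{(b,b_0)}L_0$ in $D^4$, attach a standard genus zero cobordism in $S^3\times I$ to $L\sqcup L_0$, reinterpret $D^4$ as $\ol{S^3\sm D^3}\times I\subset S^3\times I$ to get an order $n+1$ twisted Whitney tower concordance in $S^3\times I$ between $L$ and (a mirror of) $L_0$, and then glue on slicing disks of the latter in a rational homology $4$-ball---a $\Z[\tfrac12]$-homology $4$-ball for $(4)\Rightarrow(5)$---which exist because $L_0$, hence its mirror, is rationally (resp.\ $\Z[\tfrac12]$-) slice. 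This exhibits $L\in\ol\W^\tw_{n+1}$ (resp.\ the $\Z[\tfrac12]$ analog). Granting $(2)\Rightarrow(4)$, the cycle $(2)\Rightarrow(4)\Rightarrow(5)\Rightarrow(1)\Rightarrow(2)$ together with $(4)\Rightarrow(3)\Rightarrow(1)$ gives all of the equivalences.

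The substance is $(2)\Rightarrow(4)$, to be proved by induction on $n$. For $n=0$, condition $(2)$ is $\mu_0(L)=0$; since every link lies in $\W^\tw_0$ and $\sB_0=0$, Lemma~\ref{lemma:rational-vs-integral-towers-for-link} gives $L\in\W^\tw_1$, so $(4)$ holds with $L_0$ the unlink. For the inductive step I would assume the theorem at level $n-1$ and suppose $\mu_k(L)=0$ for all $k\le n$. Fix a basing $b$ for $L$. Since in particular $\mu_k(L)=0$ for $k\le n-1$, the inductive hypothesis (the equivalence $(2)\Leftrightarrow(4)$ at level $n-1$) yields a $\Z[\tfrac12]$-slice link $L_1$ with a basing $b_1$ such that $L':=L\csumover{(b,b_1)}L_1$ lies in $\W^\tw_n$. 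As $L_1$ is $\Z[\tfrac12]$-slice, its slicing disks form an order $k$ twisted Whitney tower for every $k\ge 0$, so $\mu_k(L_1)=0$ for all $k$ by Theorem~\ref{theorem:whitney-tower-milnor-invariant}(1); hence $\mu_n(L')=\mu_n(L)+\mu_n(L_1)=0$, since on links with vanishing Milnor invariants of order $<n$ the order $n$ total Milnor invariant is additive under connected sum (a classical fact). Now I would apply the implication $(4)\Rightarrow(1)$ of Lemma~\ref{lemma:rational-vs-integral-towers-for-link} to $L'\in\W^\tw_n$: there is $\theta\in\sB_n$ such that $L'\csumover{\gamma}L(\theta)\in\W^\tw_{n+1}$ for every band collection $\gamma$, where $L(\theta)$ is the $\Z[\tfrac12]$-slice link of Lemma~\ref{lemma:kernel-realization-by-rationally-slice-links}. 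Choosing $\gamma$ to be connected-sum bands supported away from the bands of $b_1$ and invoking associativity of connected sum, I can rewrite $(L\csumover{(b,b_1)}L_1)\csumover{\gamma}L(\theta)$ as $L\csumover{(b,b_0)}L_0$ with $L_0:=L_1\csum L(\theta)$ and a suitable basing $b_0$. Finally $L_0$ is $\Z[\tfrac12]$-slice, being the connected sum of two $\Z[\tfrac12]$-slice links (take the boundary connected sum of the ambient $\Z[\tfrac12]$-homology $4$-balls and band the slicing disks together). Thus $L\csumover{(b,b_0)}L_0\in\W^\tw_{n+1}$; since $b$ was arbitrary, this is $(4)$, completing the induction.

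I expect the main obstacle to be precisely the point that forces the induction: a link $L$ with vanishing Milnor invariants of order $\le n$ need not bound an order $n$ twisted Whitney tower in $D^4$, because of the conjecturally nontrivial higher order Arf obstructions to raising the order inside $D^4$, so the Conant--Schneiderman--Teichner order-raising theorem and the $\sB_n$-realization of Lemma~\ref{lemma:kernel-realization-by-rationally-slice-links} cannot be brought to bear on $L$ directly. The inductive step circumvents this by first pushing $L$ into the range of the integral theory via a connected sum with a $\Z[\tfrac12]$-slice link furnished by the previous stage, and only then killing the residual $\cT^\tw_n$-obstruction by a further connected sum with $L(\theta)$. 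The remaining work---tracking basings through the associativity rearrangement, additivity of $\mu_n$ under connected sum, and closure of $\Z[\tfrac12]$-sliceness under connected sum---is routine bookkeeping.
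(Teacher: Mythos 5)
Your proof is correct and takes essentially the same approach as the paper: both arguments reduce $(2)\Rightarrow(4)$ to iterated applications of Lemma~\ref{lemma:rational-vs-integral-towers-for-link}, accumulating a $\Z[\tfrac12]$-slice connect summand $L_0$ one order at a time, and both handle $(3)\Rightarrow(1)$ and $(4)\Rightarrow(5)$ by the same ``cap off'' argument from the proof of Lemma~\ref{lemma:rational-vs-integral-towers-for-link}. The only cosmetic difference is that you package the iteration as a formal induction on $n$ while the paper writes out the nested connected sum $L(\theta_0)\csum L(\theta_1)\csum\cdots\csum L(\theta_n)$ directly; the key point---that each stage needs $\mu_k(L\csum L(\theta_0)\csum\cdots\csum L(\theta_{k-1}))=\mu_k(L)$ via additivity of Milnor invariants on split/connected sums---appears in both (you cite it explicitly, the paper leaves it implicit at this spot and records Krushkal's additivity only in the next subsection).
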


From Theorem~\ref{theorem:rational-triviality-characterization}, the
twisted case of Theorem~\ref{theorem:general-coefficients-intro} in
the introduction follows immediately: \emph{for any subring $R$ of
  $\Q$ containing $\tfrac12$, a link in $S^3$ bounds a twisted Whitney
  tower of order $n$ in an $R$-homology 4-ball if and only if the link
  bounds a twisted Whitney tower of order $n$ in a rational homology
  4-ball.}

Also, Theorem~\ref{theorem:geometric-characterization-milnor-intro} in
the introduction is exactly the equivalence of (1) and (2) in
Theorem~\ref{theorem:rational-triviality-characterization}.

\begin{proof}
  [Proof of
  Theorem~\ref{theorem:rational-triviality-characterization}]

  (1)~$\Rightarrow$~(2) is
  Theorem~\ref{theorem:whitney-tower-milnor-invariant}~(1).  Suppose
  (2) holds.  Since $L\in \W^\tw_0$ and $\mu_0(L)=0$, there is
  $\theta_0 \in \sB_0$ and a basing $b_0$ for $L(\theta_0)$ such that
  $L\csumover{(b,b_0)}L(\theta_0) \in \W^\tw_1$ for any $b$ for~$L$,
  by Lemma~\ref{lemma:rational-vs-integral-towers-for-link}.
  Choose a basing $b_0'$ for $L(\theta_0)$ which is disjoint
  from~$b_0$.  If $n\ge 1$, then by the same argument, using
  $\mu_1(L)=0$, there is $\theta_1\in \sB_1$ and two disjoint basings
  $b_1$ and $b_1'$ for $L(\theta_1)$ such that
  $(L\csumover{(b,b_0)}L(\theta_0))\csumover{(b_0',b_1)}L(\theta_1)
  \in \W^\tw_2$.  Repeating this, choose $\theta_i\in \sB_i$ and
  disjoint basings $b_i$ and $b_i'$ for $L(\theta_i)$ for
  $i=0,\ldots,n$ such that
  \[
    \Big(\cdots \big(L\csumover{(b,b_0)}L(\theta_0)\big)
    \csumover{(b_0',b_1)} \cdots \;\;\Big) \csumover{(b_{n-1}',b_{n})}
    L(\theta_n) \in \W^\tw_{n+1}.
  \]
  Since the basings are disjoint, the above connected sum operations
  are associative.  It follows that
  $L_0 := L(\theta_0) \csumover{(b_0',b_1)} \cdots
  \csumover{(b_{n-1}',b_{n})} L(\theta_n)$ with the basing $b_0$
  satisfies (4).  This shows (2)~$\Rightarrow$~(4).
  (4)~$\Rightarrow$~(3) is straightforward.

  Both (3)~$\Rightarrow$~(1) and (4)~$\Rightarrow$~(5) are are shown
  by the standard argument used in the proof of (1)~$\Rightarrow$~(2)
  of Lemma~\ref{lemma:rational-vs-integral-towers-for-link}, using
  that $L_0$ is rationally slice and $\Z[\frac12]$-slice respectively.
  (5)~$\Rightarrow$~(1) is trivial.  The completes the proof.
\end{proof}

\subsection{Rational twisted graded quotient}
\label{subsection:twisted-graded-quotients}

For brevity, write $L\sim L'$ if two framed links $L$ and $L'$ in
$\ol\W^\tw_n$ are order $n+1$ twisted Whitney tower concordant in a
rational homology~$S^3\times I$.  Recall that
$\ol\sW^\tw_n=\ol\W^\tw_n/\mathop{\sim}$.

\begin{theorem}
  \label{theorem:computation-of-twisted-graded-quotient}
  \leavevmode\Nopagebreak
  \begin{enumerate}
  \item $\ol\sW^\tw_n$ is an abelian group under band sum
    $[L]+[L']=[L\csum_{\beta}L']$.
  \item $\mu_n\colon \ol\sW^\tw_n \to \sD_n$ is a group
    isomorphism.
  \item For the $m$-component case, $\ol\sW^\tw_n$ is a free abelian
    group of rank $\cM(m,n)$, where $\cM(m,k)$ is the number defined
    in Remark~\ref{remark:rank-of-D_n}.
  \item $\sW^\tw_n \to \ol\sW^\tw_n$ is an epimorphism with
    kernel equal to
    $\sK^\tw_n:=\Ker\{\mu_n\colon \sW^\tw_n\to\sD_n\}$.
    $\sW^\tw_n \cong \ol\sW^\tw_n$ for $n\not\equiv 2\mod 4$.
  \end{enumerate}
\end{theorem}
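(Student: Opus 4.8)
The plan is to establish parts~(2) and~(4) first, and then deduce~(1) and~(3) as formal consequences. Write $\phi\colon\sW^\tw_n\to\ol\sW^\tw_n$ for the natural map: a link in $\W^\tw_n$ lies in $\ol\W^\tw_n$ since $D^4$ is a rational homology $4$-ball, and an order $n+1$ twisted Whitney tower concordance in $S^3\times I$ is one in a rational homology $S^3\times I$; thus $\phi$ is well defined, it sends band sums to band sums, and $\mu_n\circ\phi=\mu_n$. By Theorem~\ref{theorem:whitney-tower-milnor-invariant}, every $L\in\ol\W^\tw_n$ has $\mu_k(L)=0$ for $k<n$, so $\mu_n(L)\in\sD_n$ is defined, and by Corollary~\ref{corollary:whitney-concordance-invariance-milnor-invariant} it depends only on the class of $L$ in $\ol\sW^\tw_n$; this gives a map of sets $\mu_n\colon\ol\sW^\tw_n\to\sD_n$, and the first goal is to show it is a bijection.

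First I would prove a reduction to the integral theory: \emph{every class of $\ol\sW^\tw_n$ equals $\phi$ of a class of $\sW^\tw_n$}. Given $\Lambda\in\ol\W^\tw_n$ (for $n=0$ there is nothing to do, as $\ol\W^\tw_0=\W^\tw_0$), apply Theorem~\ref{theorem:rational-triviality-characterization} with $n$ replaced by $n-1$: since $\mu_k(\Lambda)=0$ for $k\le n-1$, some band sum $\Lambda\csum_\beta L_0$ with $L_0$ rationally slice lies in $\W^\tw_n$. A band sum with a rationally slice link is a rational concordance — construct it by an ambient connected sum of the rational homology $4$-ball bounded by $L_0$, with its slicing disks, into a collar $S^3\times I$, a mild variant of the standard constructions already used in the proof of Lemma~\ref{lemma:rational-vs-integral-towers-for-link} — so $\Lambda\sim\Lambda\csum_\beta L_0$ and $[\Lambda]=\phi([\Lambda\csum_\beta L_0])$. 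Combined with surjectivity of $\mu_n\colon\sW^\tw_n\to\sD_n$ (which holds because $\mu_n\circ R^\tw_n=\eta_n$ by Theorem~\ref{theorem:whitney-tower-milnor-invariant}(2), $R^\tw_n$ is onto, and $\eta_n$ is onto), this already yields that $\mu_n\colon\ol\sW^\tw_n\to\sD_n$ is onto.

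The crux is injectivity of $\mu_n$ on $\ol\sW^\tw_n$. Here the key input is that $\sK^\tw_n=\Ker\{\mu_n\colon\sW^\tw_n\to\sD_n\}$ equals $R^\tw_n(\sB_n)$ — indeed $\mu_n\circ R^\tw_n=\eta_n$ and $R^\tw_n$ is onto, so $\sK^\tw_n=R^\tw_n(\Ker\eta_n)=R^\tw_n(\sB_n)$ — whence, by Lemma~\ref{lemma:kernel-realization-by-rationally-slice-links}, every element of $\sK^\tw_n$ is represented by a rationally slice link. It follows that for $[L],[L']\in\sW^\tw_n$ one has $\phi([L])=\phi([L'])$ iff $[L]-[L']\in\sK^\tw_n$: if the difference is $[L_1]$ with $L_1$ rationally slice then $[L]=[L'\csum L_1]$ in $\sW^\tw_n$ and $L'\csum L_1\sim L'$, so $\phi([L])=\phi([L'])$; conversely applying $\mu_n$ to $\phi([L])=\phi([L'])$ gives $\mu_n([L])=\mu_n([L'])$, hence $[L]-[L']\in\sK^\tw_n$. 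Injectivity of $\mu_n$ on $\ol\sW^\tw_n$ now follows: given $\Lambda,\Lambda'\in\ol\W^\tw_n$ with $\mu_n(\Lambda)=\mu_n(\Lambda')$, write $[\Lambda]=\phi([L])$, $[\Lambda']=\phi([L'])$ by the reduction step; then $\mu_n([L])=\mu_n([L'])$, so $[L]-[L']\in\sK^\tw_n$, and therefore $[\Lambda]=\phi([L])=\phi([L'])=[\Lambda']$.

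Finally I would assemble the statements. Since $\mu_n\colon\ol\sW^\tw_n\to\sD_n$ is a bijection, transport the abelian group structure of $\sD_n$ along it; by the standard additivity of Milnor invariants under band sum (valid because the lower-order invariants vanish on $\ol\W^\tw_n$) one has $\mu_n(L\csum_\beta L')=\mu_n(L)+\mu_n(L')$, so in particular $\mu_k(L\csum_\beta L')=0$ for $k<n$, whence $L\csum_\beta L'\in\ol\W^\tw_n$ by Theorem~\ref{theorem:rational-triviality-characterization}, and band sum realizes exactly the transported operation — so it is a well-defined abelian group operation on $\ol\sW^\tw_n$, independent of bands and representatives, giving~(1), and $\mu_n$ is a group isomorphism, giving~(2). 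Part~(3) is then immediate from $\sD_n$ being free abelian of rank $\cM(m,n)$, as recorded in Remark~\ref{remark:rank-of-D_n}. For~(4): $\phi$ is a homomorphism (band sums are the group operation on both sides), it is onto by the reduction step, and taking $L'$ trivial in the criterion above shows $\Ker\phi=\sK^\tw_n$; when $n\not\equiv2\mod4$ we have $\sK^\tw_n=0$ since $\mu_n\colon\sW^\tw_n\to\sD_n$ is then an isomorphism by Theorem~\ref{theorem:cst-integral-theory}(\ref{item:kernel-of-mu}), so $\phi$ is injective, hence an isomorphism. I expect the injectivity of $\mu_n$ on $\ol\sW^\tw_n$ to be the main obstacle — it is precisely where the Conant--Schneiderman--Teichner realization machinery enters, through $\sK^\tw_n=R^\tw_n(\sB_n)$ and Lemma~\ref{lemma:kernel-realization-by-rationally-slice-links} — with the handle-theoretic ``band sum with a rationally slice link is a rational concordance'' construction, and the basing bookkeeping for iterated such band sums, being the remaining delicate points.
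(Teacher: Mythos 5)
Your proof is correct and follows essentially the same route as the paper: both arguments reduce to showing that $\mu_n\colon\ol\sW^\tw_n\to\sD_n$ is a bijection and then transport the group structure, and both rely on the same ingredients — Theorem~\ref{theorem:rational-triviality-characterization}, Krushkal's additivity, and Lemma~\ref{lemma:kernel-realization-by-rationally-slice-links}. The only organizational difference is that you factor the injectivity argument through the surjectivity of $\phi\colon\sW^\tw_n\to\ol\sW^\tw_n$ and a characterization of its fibers as cosets of $\sK^\tw_n$, whereas the paper proves directly the equivalent claim that $L\sim L'$ if and only if $\mu_n(L)=\mu_n(L')$ (using Corollary~\ref{corollary:whitney-concordance-invariance-milnor-invariant} for the forward direction and the band-sum argument for the converse), which gives injectivity without needing surjectivity of $\phi$ as a separate preliminary step.
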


In the following proof, we will use Krushkal's
additivity~\cite{Krushkal:1998-1}: \emph{if $\mu_q(L)=0=\mu_q(L')$ for
  $q<n$, then $\mu_n(L\csumover\beta L') = \mu_n(L)+\mu_n(L')$ for any
  bands~$\beta$.}  It can also be seen by using
Theorem~\ref{theorem:whitney-tower-milnor-invariant}.

\begin{proof}
  [Proof of Theorem~\ref{theorem:computation-of-twisted-graded-quotient}]
  
  We first claim that for $L, L'\in \ol\W^\tw_n$, $L\sim L'$ if and
  only if $\mu_n(L)=\mu_n(L')$.  The only if direction is true by
  Corollary~\ref{corollary:whitney-concordance-invariance-milnor-invariant}.
  Conversely, if $\mu_n(L)=\mu_n(L')$, then for any choice of bands
  $\beta$, $\mu(L\csum_{\beta}-L') = \mu(L)-\mu(L') = 0$ by the
  additivity.  By
  Theorem~\ref{theorem:rational-triviality-characterization}, it
  follows that $L\csum_{\beta} -L'\in \ol\W^\tw_{n+1}$.  It implies
  $L\sim L'$ by the standard argument for band sum which was used in
  the proof of (1)~$\Rightarrow$~(2) of
  Lemma~\ref{lemma:rational-vs-integral-towers-for-link}.  This
  completes the proof of the claim.

  By the claim, $\mu_n\colon \ol\sW^\tw_n \to \sD_n$ is an
  injective \emph{function}.  Since the diagram
  \begin{equation}
    \label{equation:mu-commutativity}
    \vcenter{\hbox{\begin{tikzcd}
          \sW^\tw_n \ar[rr] \ar[rd, two heads, "\mu_n" below left] & &
          \ol\sW^\tw_n \ar[ld, "\mu_n"]
          \\
          & \sD_n
        \end{tikzcd}}}
  \end{equation}
  is commutative and since $\mu_n\colon \sW^\tw_n \to \sD_n$ is
  surjective by
  Theorem~\ref{theorem:cst-integral-theory}~(\ref{item:kernel-of-mu}),
  it follows that $\mu_n\colon \ol\sW^\tw_n \to \sD_n$ is
  surjective.  Therefore $\mu_n\colon \ol\sW^\tw_n \to \sD_n$ is
  bijective.

  Also, from the claim, it follows that the class
  $[L\csumover{\beta}L']$ of a band sum is determined by the classes
  $[L]$ and $[L]\in \ol\sW^\tw_n$, independent of the choice of
  $\beta$, since $\mu_n(L\csumover{\beta}L') = \mu_n(L)+\mu_n(L')$ is
  determined by $\mu_n(L)$ and~$\mu_n(L')$.  Thus
  $[L]+[L']=[L\csumover{\beta}L']$ is a well defined operation
  on~$\ol\sW^\tw_n$.

  Since $\sD_n$ is a group and
  $\mu_n\colon \ol\sW^\tw_n \to \sD_n$ is a bijective function
  preserving the addition, $\ol\sW^\tw_n$ is a group under the
  addition and $\mu_n\colon \ol\sW^\tw_n \to \sD_n$ is a group
  isomorphism.  This proves (1) and~(2).

  Since $\sD_n$ is a free abelian group of rank $\cM(m,n)$ (see
  Remark~\ref{remark:rank-of-D_n}), so is~$\ol\sW^\tw_n$.  This
  shows~(3).

  Since $\mu_n\colon \ol\sW^\tw_n \to \sD_n$ is an isomorphism,
  from \eqref{equation:mu-commutativity} it follows that
  $\sW^\tw_n \to \ol\sW^\tw_n$ is surjective and has kernel
  $\sK^\tw_n:=\Ker\{\mu_n\colon \sW^\tw_n\to\sD_n\}$.  By
  Theorem~\ref{theorem:cst-integral-theory}~(\ref{item:kernel-of-mu}),
  $\sK^\tw_n=0$ for $n\not\equiv 2\mod 4$.  This shows~(4).
\end{proof}

Since $[L]=0$ in $\ol\sW^\tw_n$ if and only if
$L \in \ol\W^\tw_{n+1}$, the following is a direct consequence of
Theorem~\ref{theorem:computation-of-twisted-graded-quotient}~(1).

\begin{corollary}
  \label{corollary:whitney-concordance-vs-band-sum}
  $[L] = [L']$ in $\ol\sW^\tw_n$ if and only if
  $L\csumover{\beta}-L' \in \ol\W^\tw_{n+1}$
\end{corollary}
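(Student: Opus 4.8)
The plan is to obtain Corollary~\ref{corollary:whitney-concordance-vs-band-sum} as a formal consequence of the abelian group structure on $\ol\sW^\tw_n$ provided by Theorem~\ref{theorem:computation-of-twisted-graded-quotient}~(1), once the zero element has been identified. Concretely, I would first show that $[L]=0$ in $\ol\sW^\tw_n$ holds precisely when $L\in\ol\W^\tw_{n+1}$, and then run the usual argument that $[L]=[L']$ is equivalent to $[L]-[L']=0$, with $-L'$ (the orientation-reversed mirror image) representing the inverse $-[L']$.

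For the first step, let $L\in\ol\W^\tw_n$. By Theorem~\ref{theorem:whitney-tower-milnor-invariant}~(1) this already forces $\mu_k(L)=0$ for $k<n$. Taking the $m$-component unlink $U$ (which lies in $\ol\W^\tw_{n+1}\subseteq\ol\W^\tw_n$ and satisfies $\mu_n(U)=0$, and which is a representative of the zero class of $\ol\sW^\tw_n$) and invoking the claim established at the beginning of the proof of Theorem~\ref{theorem:computation-of-twisted-graded-quotient} — that $L\sim L'$ is equivalent to $\mu_n(L)=\mu_n(L')$ — we see that $[L]=0$ is equivalent to $\mu_n(L)=0$. Together with the vanishing of the lower $\mu_k$, this is exactly condition~(2) of Theorem~\ref{theorem:rational-triviality-characterization}, so by the equivalence of conditions~(1) and~(2) there it holds if and only if $L\in\ol\W^\tw_{n+1}$.

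For the second step, observe that for $L,L'\in\ol\W^\tw_n$ and any band system $\beta$, both band sum and orientation-reversed mirroring preserve the property of bounding a twisted Whitney tower of order $n$ in a rational homology $4$-ball (via boundary connected sum of the ambient $4$-manifolds and the standard genus-zero cobordism), so $L\csumover{\beta}-L'$ again lies in $\ol\W^\tw_n$ and represents $[L]+[-L']$ in $\ol\sW^\tw_n$ by Theorem~\ref{theorem:computation-of-twisted-graded-quotient}~(1). Krushkal additivity (recalled in the proof of Theorem~\ref{theorem:computation-of-twisted-graded-quotient}), together with $\mu_k(-L')=-\mu_k(L')$ and $\mu_k(L')=0$ for $k<n$, gives $\mu_k(L'\csumover{\beta}-L')=0$ for all $k\le n$; by Theorem~\ref{theorem:rational-triviality-characterization} this yields $L'\csumover{\beta}-L'\in\ol\W^\tw_{n+1}$, and hence, by the first step, $[L']+[-L']=[L'\csumover{\beta}-L']=0$, i.e.\ $[-L']=-[L']$. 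Consequently $[L]=[L']$ in $\ol\sW^\tw_n$ is equivalent to $[L\csumover{\beta}-L']=[L]-[L']=0$, which by the first step is equivalent to $L\csumover{\beta}-L'\in\ol\W^\tw_{n+1}$, as claimed.

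I do not expect any genuine obstacle here: all the mathematical content already resides in Theorem~\ref{theorem:computation-of-twisted-graded-quotient}~(1) and Theorem~\ref{theorem:rational-triviality-characterization}. What remains are routine bookkeeping facts — stability of $\ol\W^\tw_n$ under band sum and orientation-reversed mirroring, the identification of the zero class with $[U]$, and the additivity and sign behaviour of $\mu_n$ — all of which are standard. (Alternatively, one can bypass the group-inverse discussion altogether and simply note that both sides of the asserted equivalence are equivalent to $\mu_n(L)=\mu_n(L')$, by the same two theorems.)
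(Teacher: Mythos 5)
Your proposal is correct and matches the paper's (very terse) proof in approach: the paper likewise reduces the statement to the identification $[L]=0 \Leftrightarrow L\in\ol\W^\tw_{n+1}$ (via $\mu_n$ and Theorem~\ref{theorem:rational-triviality-characterization}) combined with the group structure from Theorem~\ref{theorem:computation-of-twisted-graded-quotient}~(1). You simply spell out the routine bookkeeping — the zero-class characterization, $[-L']=-[L']$, and the alternative route through $\mu_n(L)=\mu_n(L')$ — that the paper leaves implicit.
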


We conclude this section with a discussion on the higher order Arf
invariants.  Recall that
$\sB_{4k-2} = \Ker\{\eta_{4k-2}\colon \cT^\tw_{4k-2} \rightarrow
\sD_{4k-2}\} \cong \Z_2\otimes \sL_k$ and
$\sK^\tw_{4k-2} = \Ker\{\mu_{4k-2}\colon \sW^\tw_{4k-2} \rightarrow
\sD_{4k-2}\}$.  In \cite{Conant-Schneiderman-Teichner:2012-2}, Conant,
Schneiderman and Teichner showed that
$R^\tw_{4k-2}\colon \cT^\tw_{4k-2} \to \sW^\tw_{4k-2}$ restricts to an
epimorphism
$\alpha^\tw_k\colon \sB_{4k-2} \twoheadrightarrow \sK^\tw_{4k-2}$.  They
defined the \emph{$k$th higher order Arf invariant} by
\[
  \Arf_k:=(\alpha^\tw_k)^{-1} \colon \sK^\tw_{4k-2}
  \xrightarrow{\cong} \sB_{4k-2}/\Ker \alpha_k.
\]
The higher order Arf invariant conjecture asserts that $\alpha^\tw_k$
is an isomorphism. In particular, it claims that $\Arf_k$ is not
identically trivial.

Using the definition of $\Arf_k$, it is straightforward to reformulate
Theorem~\ref{theorem:computation-of-twisted-graded-quotient}~(4) to
the following statement:

\begin{corollary}
  \label{corollary:higher-order-arf-graded-quotient}
  The epimorphism $\sW^\tw_n \to \ol\sW^\tw_n$ is an isomorphism
  if and only if either $n \not\equiv 2\mod 4$, or $n=4k-2$ and
  $\Arf_k \equiv 0$.
\end{corollary}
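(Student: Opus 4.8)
The plan is to reduce the statement to the vanishing of $\sK^\tw_n := \Ker\{\mu_n\colon \sW^\tw_n\to\sD_n\}$ and then to read off the answer from the definition of $\Arf_k$. By Theorem~\ref{theorem:computation-of-twisted-graded-quotient}~(4), the map $\sW^\tw_n \to \ol\sW^\tw_n$ is always an epimorphism with kernel $\sK^\tw_n$; being surjective, it is an isomorphism if and only if $\sK^\tw_n = 0$. So it suffices to prove that $\sK^\tw_n = 0$ exactly when $n\not\equiv 2\bmod 4$, or $n = 4k-2$ and $\Arf_k\equiv 0$.

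First I would handle $n\not\equiv 2\bmod 4$. Here $\mu_n\colon \sW^\tw_n\to\sD_n$ is an isomorphism by Theorem~\ref{theorem:cst-integral-theory}~(\ref{item:kernel-of-mu}) (equivalently, by the last assertion of Theorem~\ref{theorem:computation-of-twisted-graded-quotient}~(4)), so $\sK^\tw_n = 0$ and $\sW^\tw_n \to \ol\sW^\tw_n$ is an isomorphism, in agreement with the claimed equivalence.

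For $n = 4k-2$ the argument is purely formal. By construction $\alpha^\tw_k\colon \sB_{4k-2}\twoheadrightarrow \sK^\tw_{4k-2}$ is an epimorphism and $\Arf_k = (\alpha^\tw_k)^{-1}$ is an \emph{isomorphism} $\sK^\tw_{4k-2}\xrightarrow{\cong}\sB_{4k-2}/\Ker\alpha^\tw_k$. Hence if $\sK^\tw_{4k-2} = 0$ then $\sB_{4k-2}/\Ker\alpha^\tw_k$, being isomorphic to it, is also trivial, so $\Arf_k$ is the zero homomorphism; conversely, if $\Arf_k\equiv 0$ then $\Arf_k$ is at once injective (being an isomorphism) and identically zero, which forces $\sK^\tw_{4k-2} = 0$. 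Combining this with the reduction above and the previous paragraph yields the corollary.

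I do not foresee a real obstacle: the mathematical content is entirely carried by Theorem~\ref{theorem:computation-of-twisted-graded-quotient}~(4) and the defining property of $\Arf_k$. The one point deserving care is the interpretation of ``$\Arf_k\equiv 0$'': since $\Arf_k$ is, by definition, an isomorphism onto $\sB_{4k-2}/\Ker\alpha^\tw_k$, asserting that it is identically trivial is the same as asserting that this quotient --- equivalently $\sK^\tw_{4k-2}$ --- vanishes, which is precisely what the reduction requires.
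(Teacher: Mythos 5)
Your proof is correct and matches the paper's intended argument: the paper simply notes that the corollary is a straightforward reformulation of Theorem~\ref{theorem:computation-of-twisted-graded-quotient}~(4) via the definition of $\Arf_k$, and you have spelled out exactly that reformulation, reducing to $\sK^\tw_n = 0$ and observing that for $n=4k-2$ the isomorphism $\Arf_k\colon \sK^\tw_{4k-2}\xrightarrow{\cong}\sB_{4k-2}/\Ker\alpha^\tw_k$ vanishes identically precisely when its domain is trivial.
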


Theorem~\ref{theorem:higher-order-arf-rational-tower-intro} in the
introduction is an immediate consequence of
Corollary~\ref{corollary:higher-order-arf-graded-quotient}.

\section{Framed classification}
\label{section:framed-classification}

Let $\ol\W_n$ be the set of framed links in $S^3$ which bound an order
$n$ framed Whitney tower in a rational homology 4-ball.  The goal of
this section is to understand the structure of the filtration
$\{\ol\W_n\}$ and its graded quotients $\ol\sW_n$ which is a framed
analog of~$\ol\sW^\tw_n$.  We will define $\ol\sW_n$ precisely in
Section~\ref{subsection:framed-graded-quotients}.  The main result is
as follows.

\begin{theorem}
  \label{theorem:structure-graded-quotient-framed}
  For the $m$-component case, the following hold.
  \begin{enumerate}
  \item The Milnor invariant of order $n$ gives rise to an epimorphism
    $\mu_n\colon \ol\sW_n \to \Z^{\cM(m,n)}$ onto a free abelian
    group of rank~$\cM(m,n)$.
  \item If $n$ is even, then $\mu_n$ is an isomorphism
    $\ol\sW_n \cong \Z^{\cM(m,n)}$.
  \item If $n=2\ell-1$, there is a short exact sequence
    \[
      0 \to (\Z_2)^{\cR(m,\ell+1)} \to \ol\sW_n
      \xrightarrow{\mu_n} \Z^{\cM(m,n)} \to 0
    \]
    where $\Ker\{\mu_n\}$ is identified with
    $(\Z_2)^{\cR(m,\ell+1)}\cong \Z_2 \otimes \sL_{\ell+1}$ via the
    higher order Sato-Levine invariant~$\SLev_{2\ell-1}$.
    Consequently,
    $\sW_n \cong \Z^{\cM(m,n)} \oplus (\Z_2)^{\cR(m,\ell+1)}$.
  \end{enumerate}
\end{theorem}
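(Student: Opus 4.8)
The plan is to follow the proof of Theorem~\ref{theorem:computation-of-twisted-graded-quotient}, now with the higher order Sato-Levine invariant $\SLev_n$ adjoined to the Milnor invariant as the complete invariant in odd degrees. The inputs, all framed analogs of tools already developed, are: (i)~the framed rational triviality characterization (Theorem~\ref{theorem:rational-triviality-characterization-framed}) together with its realization results --- in particular that for odd $n=2\ell-1$ the invariant $\SLev_n$ is well defined on links in $\ol\W_n$ with values in $\Z_2\otimes\sL_{\ell+1}$, and that $L\in\ol\W_{n+1}$ if and only if $\mu_k(L)=0$ for all $k\le n$ and, when $n$ is odd, also $\SLev_n(L)=0$; (ii)~invariance of $\mu_n$ and $\SLev_n$ under order $n+1$ framed Whitney tower concordance in a rational homology $S^3\times I$ (the framed analog of Corollary~\ref{corollary:whitney-concordance-invariance-milnor-invariant}); (iii)~additivity: $\mu_n(L\csumover{\beta}L')=\mu_n(L)+\mu_n(L')$ (Krushkal's additivity) and $\SLev_n(L\csumover{\beta}L')=\SLev_n(L)+\SLev_n(L')$ --- the latter from the standard disjoint-union-plus-genus-zero-cobordism construction, under which tree invariants add --- for any bands $\beta$ and any $L,L'\in\ol\W_n$; and (iv)~the integral Conant--Schneiderman--Teichner theory of framed Whitney towers in $D^4$ \cite{Conant-Schneiderman-Teichner:2012-2, Conant-Schneiderman-Teichner:2012-3}: the order-raising and realization theorems, the surjectivity of $\mu_n\colon\sW_n\to\sD_n$, and --- for odd $n$ --- the surjectivity of $\SLev_n$ onto $\Z_2\otimes\sL_{\ell+1}$ by links with $\mu_n=0$.

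Writing $L\sim L'$ for order $n+1$ framed Whitney tower concordance in a rational homology $S^3\times I$, I would first show that for $L,L'\in\ol\W_n$ one has $L\sim L'$ if and only if $\mu_n(L)=\mu_n(L')$ and, when $n$ is odd, also $\SLev_n(L)=\SLev_n(L')$. The forward implication is (ii). For the converse, (iii) gives that $L\csumover{\beta}-L'$ has $\mu_k=0$ for all $k\le n$ and $\SLev_n=0$ for any bands $\beta$, so $L\csumover{\beta}-L'\in\ol\W_{n+1}$ by (i), and then $L\sim L'$ by the band-sum-to-concordance argument used in the proof of Lemma~\ref{lemma:rational-vs-integral-towers-for-link}. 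Combined with (ii), this shows that $I_n:=(\mu_n,\SLev_n)$ (the $\SLev_n$-coordinate present only for odd $n$) is a well-defined \emph{injective} function on $\ol\sW_n$ with values in $\sD_n\oplus(\Z_2\otimes\sL_{\ell+1})$, that band sum on $\ol\sW_n$ is well defined and independent of the bands (since $I_n(L\csumover{\beta}L')=I_n(L)+I_n(L')$ is determined by $[L]$ and $[L']$ and $I_n$ is injective), and that $I_n$ respects band sum.

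For surjectivity, the realization results in (iv) produce links in $\W_n\subset\ol\W_n$ realizing any prescribed $\mu_n\in\sD_n$ and, among links with vanishing $\mu_n$, any prescribed $\SLev_n\in\Z_2\otimes\sL_{\ell+1}$, so the image of $I_n$ is a subgroup of $\sD_n\oplus(\Z_2\otimes\sL_{\ell+1})$ mapping onto the first summand and containing the second, hence is everything. Thus $I_n\colon\ol\sW_n\to\sD_n\oplus(\Z_2\otimes\sL_{\ell+1})$ is a bijection respecting band sum, so it transports the group structure: $\ol\sW_n$ is an abelian group under band sum and $I_n$ is an isomorphism. Since $\sD_n$ is free abelian of rank $\cM(m,n)$ and $\Z_2\otimes\sL_{\ell+1}\cong(\Z_2)^{\cR(m,\ell+1)}$ by Witt's theorem (Remark~\ref{remark:rank-of-D_n}), this yields (1), and for even $n$ --- where there is no $\SLev_n$-coordinate --- it yields (2). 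For odd $n=2\ell-1$ it gives $\ol\sW_n\cong\Z^{\cM(m,n)}\oplus(\Z_2)^{\cR(m,\ell+1)}$, i.e.\ the short exact sequence
\[
  0\to(\Z_2)^{\cR(m,\ell+1)}\to\ol\sW_n\xrightarrow{\mu_n}\Z^{\cM(m,n)}\to 0
\]
with $\Ker\mu_n$ identified with $\Z_2\otimes\sL_{\ell+1}$ via $\SLev_n$; and, combining this with the integral framed computation of \cite{Conant-Schneiderman-Teichner:2012-2} --- equivalently, noting that the natural surjection $\sW_n\to\ol\sW_n$ is compatible with $(\mu_n,\SLev_n)$ and hence an isomorphism in odd degrees --- $\sW_n$ has the same structure, completing (3).

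The steps above are bookkeeping once the inputs are granted; the substantive point is input (i), namely that the higher order Sato-Levine invariant \emph{survives} passage to rational homology $4$-balls, in sharp contrast with the higher order Arf invariants, which do not (compare Lemma~\ref{lemma:kernel-realization-by-rationally-slice-links}). Concretely one must construct a well-defined, additive $\SLev_n$ on links bounding a framed Whitney tower of order $n$ in a rational homology $4$-ball --- the obstruction, together with the vanishing of $\mu_k$ for $k\le n$, to bounding a framed Whitney tower of order $n+1$ there --- agreeing with the integral invariant on $\W_n$. I would do this along the lines of Section~\ref{section:milnor-invariant-rational-tower}: use the Milnor type theorem for rational Whitney towers (Theorem~\ref{theorem:milnor-theorem-rational-tower}) and the commutator computations of Lemmas~\ref{lemma:meridian-in-whitney-tower} and~\ref{lemma:disk-boundary-in-lower-central-series} to compute the relevant lower central series quotient of the tower complement, observing that the $\Z_2$-coefficients in which $\SLev_n$ takes values interact trivially with the rationalizations occurring in those statements, so that no new indeterminacy is introduced. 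Granting this, rational nontriviality of $\SLev_n$ is automatic: the links realizing $\Z_2\otimes\sL_{\ell+1}$ in \cite{Conant-Schneiderman-Teichner:2012-2} already lie in $\W_n\subset\ol\W_n$ and keep their nonzero $\SLev_n$.
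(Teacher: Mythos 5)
Your plan hinges on treating $(\mu_n,\SLev_n)$ as a single invariant of links in $\ol\W_n$ with values in the direct sum $\sD_n\oplus(\Z_2\otimes\sL_{\ell+1})$, and deducing everything (the well-definedness of band sum, the group structure on $\ol\sW_n$, injectivity, the isomorphism) from additivity and injectivity of this pair. But for $n=2\ell-1$ the Sato--Levine invariant $\SLev_{2\ell-1}(L)=\slev_{2\ell}(\mu_{2\ell}(L))$ is only \emph{defined} when $\mu_k(L)=0$ for all $k\le 2\ell-1$, i.e.\ only on the $\mu_n=0$ stratum of $\ol\W_n$; a generic $L\in\ol\W_{2\ell-1}$ has $\mu_{2\ell-1}(L)\ne 0$, in which case there is no $\SLev_n(L)$. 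So $I_n$ is not a function on $\ol\W_n$, and the claimed characterization ``$L\sim L'$ iff $\mu_n(L)=\mu_n(L')$ and $\SLev_n(L)=\SLev_n(L')$'' does not make sense when $\mu_n(L)=\mu_n(L')\ne 0$. Worse, in exactly that case one cannot conclude $L\sim L'$: by Theorem~\ref{theorem:rational-triviality-characterization-framed} one still needs $\SLev_n(L\csumover{\beta}-L')=0$, and that quantity is controlled neither by $\mu_n(L)=\mu_n(L')$ nor by any per-link Sato--Levine data, since Krushkal's additivity for $\mu_{2\ell}$ fails when $\mu_{2\ell-1}\ne0$. (Indeed, since the fibers of $\mu_n\colon\ol\sW_n\to\sD_n$ are cosets of $(\Z_2)^{\cR(m,\ell+1)}$, nonzero fibers have more than one class, so your proposed equivalence is simply false.) Conceptually, the invariant carrying the odd-order information is the framed tree invariant valued in a quotient of $\widetilde\cT_{2\ell-1}$, which is an \emph{extension} of $\sD_{2\ell-1}$ by $\Z_2\otimes\sL_{\ell+1}$; the direct-sum decomposition is a conclusion (it splits because $\sD_n$ is free abelian), not a device one can use from the start.

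Because of this, the ``bookkeeping'' you describe does not deliver the group structure on $\ol\sW_n$ for free. The paper takes pains here: it defines the equivalence on $\ol\W_n$ by the band-sum condition $L\csumover{\beta}-L'\in\ol\W_{n+1}$ (Definition~\ref{definition:framed-equivalence-relation}), proves transitivity (Lemma~\ref{lemma:framed-equivalence-relation}), and then proves band sum is well-defined (Theorem~\ref{theorem:abelian-group-under-band-sum-framed}) by a careful chain of auxiliary connected sums with rationally slice links so that everything can be pushed into $\sW_n=\W_n/\W_{n+1}$, where CST's additivity is available. Only then is $\mu_n$ shown to be an epimorphism onto a rank-$\cM(m,n)$ free abelian group (using the image of $\widetilde\cT_n\to\sD_n$), and $\SLev_{2\ell-1}$ is applied purely to $\Ker\mu_{2\ell-1}$ --- where it is defined and additive --- to identify the kernel with $\Z_2\otimes\sL_{\ell+1}$, using Theorem~\ref{theorem:computation-of-twisted-graded-quotient}(2) for surjectivity and Theorem~\ref{theorem:rational-triviality-characterization-framed} for injectivity. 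You should restructure along those lines: first the group structure, then $\mu_n$ onto $\sD_n$, then $\SLev_n$ on the kernel only, then the splitting from freeness of $\sD_n$.
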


The \emph{higher-order Sato-Levine invariant $\SLev_{2k-1}$} which
appears in
Theorem~\ref{theorem:abelian-group-under-band-sum-framed}~(3) is
essential in this section.  Here we describe its definition following
\cite{Conant-Schneiderman-Teichner:2012-2}.  Recall that $\sD_n$ is
the kernel of the bracket map $\sL_1\otimes \sL_{n+1} \to \sL_{n+2}$
given by $X_i\otimes Y\mapsto [X_i,Y]$.  Suppose $n=2k$.  Due to
Levine~\cite[Theorem~1 and Corollary~2.2]{Levine:2001-2}, the quotient
of $\sD_{2k}$ modulo the subgroup generated by $\{\eta_{2k}(t) \mid t$
is an order $2k$ tree$\}$ is isomorphic to $\Z_2\otimes L_{k+1}$.  Let
$\slev_{2k}\colon \sD_{2k} \twoheadrightarrow \Z_2\otimes L_{k+1}$ be
the quotient map.

\begin{definition}[Higher order Sato-Levine invariant]
  \label{definition:higher-order-SL}
  For a link $L\subset S^3$ with $\mu_i(L)=0$ for $i\le 2k-1$,
  $\SLev_{2k-1}(L):=\slev_{2k}(\mu_{2k}(L))$.
\end{definition}

In Section~\ref{subsection:framed-cst-theory}, we will review some
necessary results on framed Whitney towers in $D^4$, from the work of
Conant, Schneiderman and Teichner.  In
Section~\ref{subsection:rational-whitney-filtration-framed}, we will
present a complete characterization of links in $\ol\W_n$ in terms of
the Milnor invariant and higher order Sato-Levine invariants (see
Theorem~\ref{theorem:rational-triviality-characterization-framed}),
and finally in Section~\ref{subsection:framed-graded-quotients}, we
will define the graded quotient $\ol\sW_n$ and compute its structure
to prove Theorem~\ref{theorem:structure-graded-quotient-framed}.

\subsection{Some results from the integral framed theory}
\label{subsection:framed-cst-theory}

All results discussed in this subsection are from the work Conant,
Schneiderman and Teichner~\cite{Conant-Schneiderman-Teichner:2012-2,
  Conant-Schneiderman-Teichner:2012-3,
  Conant-Schneiderman-Teichner:2012-1,
  Conant-Schneiderman-Teichner:2012-4}.  Similarly to the twisted
case, let $\W_n$ be the set of $m$-component framed links in $S^3$
bounding an order $n$ framed Whitney tower in~$D^4$.  Order $n+1$
framed Whitney tower concordance in $D^4$ is an equivalence relation
on~$\W_n$.  Let $\sW_n$ be the set of equivalence classes.  The band
sum of two classes is well defined on $\sW_n$ (particularly
independent of the choice of
bands)~\cite[Lemma~3.4]{Conant-Schneiderman-Teichner:2012-2}, and
$\sW_n$ is an abelian group under band sum.  Two links
$L, L' \in \W_n$ represent the same element in $\sW_n$ if and only if
$L\csumover{\beta}-L' \in \W_{n+1}$ for some~$\beta$.  Often we write
$\sW_n = \W_n / \W_{n+1}$.

In the study of the framed theory, they use a framed analog
$\widetilde\cT_n$ of the group $\cT^\tw_n$ discussed in
Section~\ref{section:review-integral-theory}.  The group
$\widetilde\cT_n$ is a quotient of the free abelian group generated by
order~$n$ trees (without using $\tw$-trees), modulo certain relations.
We omit its precise definition since we will use only the results
discussed below.  For an order $n$ framed Whitney tower $T$, the
formal sum $t^\tw_n(T)$ described in
Definition~\ref{definition:tree-invariant-of-twisted-tower} does not
have any $\tw$-tree summand and thus represents an element
$\tilde\tau_n(T) \in \widetilde\cT_n$.  Conversely, there is an
epimorphism $\widetilde R_n\colon \widetilde\cT_n \to \sW_n$, called
the \emph{realization map}, such that $\widetilde R_n(\phi)$ is the
class of a link bounding an order $n$ framed Whitney tower $T$ in
$D^4$ with $\tilde\tau_n(T)=\phi$.

\begin{theorem}[Framed Order
  Raising~{\cite[Theorem~4.4]{Conant-Schneiderman-Teichner:2012-2}}]
  \label{theorem:framed-order-raising}
  If a link $L$ bounds an order $n$ framed Whitney tower $T$ in $D^4$
  with $\tilde\tau_n(T)=0\in \widetilde\cT_n$, then $L$ bounds an
  order $n+1$ framed Whitney tower in~$D^4$.
\end{theorem}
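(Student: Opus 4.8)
The plan is to run the argument that proves the twisted Order Raising theorem (Theorem~\ref{theorem:cst-order-rasing}), with the twisted quotient $\cT^\tw_n$ replaced throughout by its framed analogue $\widetilde\cT_n$, following \cite{Conant-Schneiderman-Teichner:2012-2}; I only indicate the steps and the points where the framed case genuinely differs. After making $T$ split, its order $n$ intersections are indexed by the order $n$ trees $t_p$ (there is no twisted Whitney disk of order $n/2$ in the framed theory, so the formal sum $t^\tw_n(T)$ has no $\tw$-tree summand), and $t^\tw_n(T)$ is by definition a representative, in the free abelian group on order $n$ trees, of the class $\tilde\tau_n(T)\in\widetilde\cT_n$. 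The first and main step is to establish the \emph{geometric realization} of the defining relations of $\widetilde\cT_n$: each such relation is effected by a local modification of the framed Whitney tower $T$, supported in a ball and altering neither the boundary link $L$ nor the order, possibly at the cost of introducing auxiliary cancelling pairs of order $n$ intersections that come already equipped with order $n+1$ Whitney disks. The antisymmetry relations are realized by reorienting a Whitney disk or permuting the order $0$ disks; the Jacobi (IHX) relations are realized by the geometric IHX construction of Schneiderman and Teichner~\cite{Schneiderman-Teichner:2004-1}, a fixed local picture in which a sequence of finger and Whitney moves near a single order $n$ intersection trades its tree for the two trees appearing in the remaining terms of the relation; and the framing relations are realized by boundary-twist and interior-twist moves on the Whitney disks.

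Granting this, since $\tilde\tau_n(T)=0$ in $\widetilde\cT_n$ the formal sum $t^\tw_n(T)$ lies in the subgroup generated by these relations, so finitely many of the above moves convert $T$ into a split order $n$ framed Whitney tower $T'$ bounded by $L$ with $t^\tw_n(T')=0$ as an \emph{honest} formal sum. This means the genuinely unpaired order $n$ intersections of $T'$ occur in pairs $(p,q)$ with $\epsilon(p)=-\epsilon(q)$ and $t_p=t_q$. Each such pair is then cancelled by an order $n+1$ framed Whitney disk: this is the geometric counterpart of the formal identity $\epsilon(p)t_p+\epsilon(q)t_q=0$, and it uses that $D^4$ is simply connected (so that, the two trees matching, $p$ and $q$ can be arranged to be intersections of the same pair of sheets joined by a Whitney circle, which bounds an immersed disk) together with the fact that the framing relations built into $\widetilde\cT_n$ are exactly what make this Whitney disk choosable framed; because $p$ and $q$ have order $n$, the new disk has order $n+1$. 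Performing this for every pair yields an order $n+1$ framed Whitney tower bounded by $L$, proving Theorem~\ref{theorem:framed-order-raising}.

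The main obstacle is the geometric realization of the IHX relations: the geometric Jacobi identity is the delicate core of the theory, and one must verify that the finger and Whitney moves realizing it can be carried out inside a neighborhood of one order $n$ intersection without disturbing the lower-order paired intersections and without creating any intersection of order $\le n$ other than the intended ones. A second, framed-specific subtlety is the cancellation step: one must check that the framing relations of $\widetilde\cT_n$ (rather than merely the weaker $\tw$-tree relations of $\cT^\tw_n$) are precisely the obstruction to cancelling an oppositely-signed, equal-tree pair of order $n$ intersections by a \emph{framed} Whitney disk, so that the hypothesis $\tilde\tau_n(T)=0$ in the framed quotient really does suffice and no boundary twist needs to be introduced.
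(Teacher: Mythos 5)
The paper does not prove this theorem; it imports it verbatim as a black-box result of Conant, Schneiderman and Teichner (cited as Theorem~4.4 of \cite{Conant-Schneiderman-Teichner:2012-2}), and uses it in Section~\ref{subsection:rational-whitney-filtration-framed} exactly as the twisted analogue Theorem~\ref{theorem:cst-order-rasing} is used earlier. So there is no in-paper proof against which to compare your sketch, and you should not be trying to reprove this here: the whole design of the paper is to take the order-raising theorems as given and analyze which parts of the resulting integral classification persist rationally.

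As a summary of the cited argument your outline is in the right spirit, but the final cancellation step hides the real content and, as written, does not go through. After realizing the relations geometrically so that $t^\tw_n(T')=0$ literally, two unpaired order~$n$ intersections $p$, $q$ with $\epsilon(p)=-\epsilon(q)$ and $t_p=t_q$ need not lie on the same pair of sheets: they merely have combinatorially identical trees, and the Whitney disks whose intersection they are can be completely different disks at every stage of the tower. Getting from ``same tree, opposite sign'' to an honest framed Whitney disk of order~$n+1$ requires a recursive transfer/merging argument working down the tree structure (this is the technical heart of Schneiderman--Teichner and of CST's Lemma~4.5 and surrounding material), together with a careful framing bookkeeping; simple connectivity of $D^4$ and ``the trees match'' are necessary hypotheses for that machinery, not a mechanism in themselves. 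Your closing caveat about the framing relations points in the right direction, but the bigger gap is this pairing step, which your sketch treats as if it were immediate.
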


For even $n$, they showed that
$\widetilde\cT_{2\ell} \cong \Z^{\cM(m,n)}$ where $m$ is the number of
link components, using their proof of the Levine
conjecture~\cite{Conant-Schneiderman-Teichner:2012-1,
  Conant-Schneiderman-Teichner:2012-2}.  (Recall that $\cM(m,n)$ is
the rank of~$\sD_n$; see Remark~\ref{remark:rank-of-D_n}.)  In fact,
there is a homomorphism $\widetilde\cT_{n} \to \cT^\tw_{n}$ taking the
class of an order $n$ tree to the class of the same tree, and for
$n=2\ell$, the composition
$\widetilde\cT_{2\ell} \to \cT^\tw_{2\ell} \xrightarrow{\eta_{2\ell}}
\sD_{2\ell}$ is a monomorphism whose image has the same rank
as~$\sD_{2\ell}$.

For odd $n=2\ell-1$, the structure of $\widetilde\cT_{2\ell-1}$ is
more involved, as described below.  The boundary twist operation
defined in \cite[Section~1.3]{Freedman-Quinn:1990-1} changes a twisted
Whitney disk to a framed Whitney disk at the cost of introducing new
intersections.  Using this (together with IHX), in
\cite{Conant-Schneiderman-Teichner:2012-2}, it was observed that a
twisted Whitney tower $T$ of order $2\ell$ can be changed to a framed
Whitney tower of order~$2\ell-1$, which we denote by~
$\partial^\tw(T)$.  In terms of the associated trees, this geometric
modification changes an order $\ell$ $\tw$-tree of the form
$\smallYtree{\tw}{i}{J}$ to the order $2\ell-1$
tree~$\smallYtree{i}{J}{J}\;$.  (Here $i$ denotes a univariant vertex
and $J$ is a subtree; any $\tw$-tree can be changed to the form of
$\smallYtree{\tw}{i}{J}$ by IHX.)  This gives rise to a homomorphism
$\partial^\tw\colon \cT^\tw_{2\ell} \to
\smash[t]{\widetilde\cT^{\mathstrut}_{2\ell-1}}$ satisfying
$\tilde\tau_{2\ell-1}(\partial^\tw(T))
= \partial^\tw(\tau^\tw_{2\ell}(T))$ for a twisted Whitney tower $T$
of order~$2\ell$.  The following commutative diagram, which we discuss
below, computes the structure
of~$\tilde\cT_{2\ell-1}$~\cite{Conant-Schneiderman-Teichner:2012-2,
  Conant-Schneiderman-Teichner:2012-3,
  Conant-Schneiderman-Teichner:2012-1,
  Conant-Schneiderman-Teichner:2012-4}.

\begin{equation}
  \label{equation:master}
  \vcenter{\hbox{
      \begin{tikzcd}[ampersand replacement=\&, row sep=scriptsize]
        \sB_{2\ell} \ar[d, tail] \ar[r, "\partial^\tw|", "\cong" below]
        \& \sB^{\SLev}_{2\ell-1}
        \smash[b]{\hbox to 0mm{ $\cong$ \small$\begin{cases}
            \Z_2\otimes \sL_k & \text{if }\ell=2k-1\\
            0 & \text{if }\ell=2k
          \end{cases}$\hss}}
        \ar[dd, tail] \&
        \\
        \cT^\tw_{2\ell} \ar[rr, "\partial^\tw" near start, crossing over]
        \ar[rd, two heads] \ar[dd, "\eta_{2\ell}", two heads]
        \& \& \widetilde\cT_{2\ell-1} \ar[r, two heads]
        \& \cT^\tw_{2\ell-1} \ar[d, "\cong" left, "\eta_{2\ell-1}"]
        \\
        \& \Z_2 \otimes \sL'_{\ell+1} \ar[d,two heads] \ar[ru, tail]
        \& \& \sD_{2\ell-1}
        \\
        \sD_{2\ell} \ar[r, "\slev_{2\ell}" below, two heads]
        \& \Z_2 \otimes \sL_{\ell+1}        
      \end{tikzcd}
    }}
\end{equation}

\begin{enumerate}
\item The row starting with $\cT^\tw_{2\ell}$ is exact.  By
  Theorem~\ref{theorem:cst-integral-theory}~(\ref{item:kernel-of-mu}),
  $\cT^\tw_{2\ell-1} \cong \sD_{2\ell-1}$ under~$\eta_{2\ell-1}$.
\item The image of
  $\partial^\tw\colon \cT^\tw_{2\ell} \to
  \widetilde\cT^{\mathstrut}_{2\ell-1}$ is isomorphic to
  $\Z_2\otimes\sL'_{\ell+1}$, where $\sL'_{\ell+1}$ is the degree
  $\ell+1$ part of Levine's \emph{quasi-Lie
    algebra}~\cite{Levine:2001-2}.  The abelian group $\sL'_{\ell+1}$
  is defined by replacing the alternativity relation $[X,X]=0$ of
  $\sL_{\ell+1}$ with the antisymmetry relation $[X,Y]+[Y,X]=0$.
  Regarding $\sL'_{\ell+1}$, we will need only (3) and (4) below.
\item There is a homomorphism
  $\Z_2\otimes\sL'_{\ell+1} \to \Z_2\otimes\sL_{\ell+1}$ taking the
  class of an $(\ell+1)$-fold bracket in $\sL'_{\ell+1}$ to the class
  of the same bracket in~$\sL_{\ell+1}$.  It is an epimorphism fitting
  into the bottom left square.
\item Let
  $\sB^{\SLev}_{2\ell-1} := \Ker\{\Z_2\otimes\sL'_{\ell+1} \to
  \Z_2\otimes\sL_{\ell+1}\}$.  Then $\partial^\tw$ restricts to an
  isomorphsm $\sB_{2\ell} \cong \sB^{\SLev}_{2\ell-1}$.  Recall that
  $\sB_n$ is the kernel of $\eta_{n}\colon \cT^\tw_n \to \sD_n$, and
  isomorphic to $\Z_2\otimes \sL_k$ if $n=4k-2$, and $0$ if
  $n\not\equiv 2 \mod4$, as discussed in
  Section~\ref{section:review-integral-theory}.
\end{enumerate}

\subsection{Framed rational Whitney tower filtration}
\label{subsection:rational-whitney-filtration-framed}

\begin{lemma}
  \label{lemma:kernel-realization-by-rationally-slice-links-framed}
  For any $\phi\in \sB^{\SLev}_{4k-3} \cong \Z_2\otimes\sL_k$, there
  is a rationally slice link $L(\phi)$ in $S^3$ which bounds a framed
  Whitney tower $T$ of order $4k-3$ in $D^4$ with
  $\tilde\tau_{4k-3}(T)=\phi$.
\end{lemma}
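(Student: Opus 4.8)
The plan is to deduce this directly from the twisted analogue, Lemma~\ref{lemma:kernel-realization-by-rationally-slice-links}, by transporting it through the boundary twist operation~$\partial^\tw$. First I would make the index bookkeeping explicit: setting $\ell := 2k-1$ we have $4k-3 = 2\ell-1$ and $4k-2 = 2\ell$, so we are in the case $\ell = 2k-1$ of diagram~\eqref{equation:master}, where $\sB^{\SLev}_{2\ell-1} \cong \Z_2\otimes\sL_k$ is the (possibly nontrivial) group in the statement. By item~(4) of the discussion following~\eqref{equation:master}, the homomorphism $\partial^\tw$ restricts to an isomorphism $\partial^\tw\colon \sB_{4k-2} \xrightarrow{\cong} \sB^{\SLev}_{4k-3}$; so given $\phi$, I would set $\theta := (\partial^\tw|_{\sB_{4k-2}})^{-1}(\phi) \in \sB_{4k-2}$.

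Next I would apply Lemma~\ref{lemma:kernel-realization-by-rationally-slice-links} to this $\theta$, obtaining a $\Z[\tfrac12]$-slice link $L(\theta)\in\W^\tw_{4k-2}$ representing $R^\tw_{4k-2}(\theta)$; being $\Z[\tfrac12]$-slice, $L(\theta)$ is in particular rationally slice. One then needs that $L(\theta)$ \emph{itself} bounds a twisted Whitney tower $T$ of order $4k-2$ in $D^4$ with $\tau^\tw_{4k-2}(T) = \theta$, not merely that some link in its concordance class does. This follows by stacking: $L(\theta)$ is order $4k-1$ twisted Whitney tower concordant to a link bounding such a tower, and an order $4k-1$ concordance pairs up all order $4k-2$ intersections, so the stacked tower has $\tau^\tw_{4k-2} = \theta$. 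This is precisely the normalization already used in the proof of Lemma~\ref{lemma:rational-vs-integral-towers-for-link}, so nothing new is needed here.

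Finally I would apply the boundary twist operation to $T$. Since this operation is carried out inside $D^4$ and leaves the boundary link unchanged, it produces a \emph{framed} Whitney tower $\partial^\tw(T)$ of order $4k-3$ in $D^4$ bounded by $L(\theta)$, with $\tilde\tau_{4k-3}(\partial^\tw(T)) = \partial^\tw(\tau^\tw_{4k-2}(T)) = \partial^\tw(\theta) = \phi$, using the compatibility $\tilde\tau_{2\ell-1}(\partial^\tw(T)) = \partial^\tw(\tau^\tw_{2\ell}(T))$ recorded in Section~\ref{subsection:framed-cst-theory}. Setting $L(\phi) := L(\theta)$ then yields a rationally slice link bounding the desired framed Whitney tower. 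The argument is short because all the genuine content—exhibiting a rationally slice link realizing a prescribed class of $\sB_{4k-2}$—is already contained in Lemma~\ref{lemma:kernel-realization-by-rationally-slice-links}; the only points requiring care are the index shift $\ell = 2k-1$ (so that the relevant $\sB^{\SLev}$ is nonzero and matches $\sB_{4k-2}$) and the normalization of $\tau^\tw$ on $L(\theta)$ noted above. I do not expect a substantive obstacle.
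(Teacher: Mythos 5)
Your proof is correct and follows essentially the same route as the paper's: transport $\phi$ back through the isomorphism $\partial^\tw|_{\sB_{4k-2}}\colon \sB_{4k-2}\xrightarrow{\cong}\sB^{\SLev}_{4k-3}$, invoke Lemma~\ref{lemma:kernel-realization-by-rationally-slice-links} to produce a rationally slice $L(\theta)$ bounding an order $4k-2$ twisted tower with $\tau^\tw_{4k-2}=\theta$, and then apply the boundary twist operation. The one place you add detail beyond the paper's terse proof is the stacking argument justifying that $L(\theta)$ itself (and not merely some concordant link) bounds a tower realizing $\theta$; the paper uses this normalization silently, as it also does in the proof of Lemma~\ref{lemma:rational-vs-integral-towers-for-link}, and your explanation of it is correct.
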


\begin{proof}
  By Lemma~\ref{lemma:kernel-realization-by-rationally-slice-links},
  there is a rationally slice link $L$ which bounds a twisted Whitney
  tower $T'$ of order $4k-2$ in $D^4$ with
  $\tau^\tw_{4k-2}(T') = (\partial^\tw)^{-1}(\phi)$.  Then
  $T:=\partial^\tw(T')$ is an order $4k-3$ framed tower in $D^4$
  bounded by $L$, and
  $\tilde\tau_{4k-3}(T) = \partial^\tw(\tau^\tw_{4k-2}(T')) = \phi$.
  So $L(\phi):=L$ satisfies the desired properties.
\end{proof}

Similarly to the convention for $\sB_n$ in the twisted case, let
$\sB^{\SLev}_{2\ell}=0$ for brevity.  Then
Lemma~\ref{lemma:kernel-realization-by-rationally-slice-links-framed}
holds for any order $n$ as well as~$n=4k-3$.

The following is a framed case analog of
Lemma~\ref{lemma:rational-vs-integral-towers-for-link}.

\begin{lemma}
  \label{lemma:rational-vs-integral-towers-for-link-framed}
  Suppose $L$ is a link bounding a framed order $n$ Whitney tower
  in~$D^4$.  Then the following are equivalent:
  \begin{enumerate}
  \item There is $\phi\in \sB^{\SLev}_{n}$ such that any band sum
    $L\csumover{\beta}L(\phi)$ bounds a framed order $n+1$ Whitney
    tower in~$D^4$.
  \item $L$ bounds a framed Whitney tower of order $n+1$ in a
    $\Z[\frac12]$-homology 4-ball.
  \item $L$ bounds a framed Whitney tower of order $n+1$ in a rational
    homology 4-ball.
  \item $\mu_n(L)=0$, and in addition when $n=2\ell-1$,
    $\SLev_{2\ell-1}(L)=0$.
  \end{enumerate}
\end{lemma}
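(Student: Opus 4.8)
The plan is to run the cycle $(1)\Rightarrow(2)\Rightarrow(3)\Rightarrow(4)\Rightarrow(1)$, transcribing the proof of Lemma~\ref{lemma:rational-vs-integral-towers-for-link} with $\cT^\tw_n$, $\tau^\tw_n$, $\sB_n$ and Theorem~\ref{theorem:cst-order-rasing} replaced throughout by the framed analogues $\widetilde\cT_n$, $\tilde\tau_n$, $\sB^{\SLev}_n$ and Theorem~\ref{theorem:framed-order-raising}, and with Lemma~\ref{lemma:kernel-realization-by-rationally-slice-links-framed} used in place of Lemma~\ref{lemma:kernel-realization-by-rationally-slice-links}. For $(1)\Rightarrow(2)$ I would fix one band sum $L\csumover{\beta}L(\phi)$ bounding a framed order $n+1$ Whitney tower in $D^4$, splice on the standard genus zero cobordism from $L\csumover{\beta}L(\phi)$ to $L\sqcup L(\phi)$ and re-embed the result in $S^3\times I$ exactly as in the proof of $(1)\Rightarrow(2)$ of Lemma~\ref{lemma:rational-vs-integral-towers-for-link}, obtaining a framed order $n+1$ Whitney tower concordance from $L$ to $L(\phi)$; capping $L(\phi)$ off with a slice disk in a $\Z[\tfrac12]$-homology $4$-ball then yields a framed order $n+1$ Whitney tower bounded by $L$ in such a ball. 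The required slice disk exists because the link produced by Lemma~\ref{lemma:kernel-realization-by-rationally-slice-links-framed} is, by construction, the $\Z[\tfrac12]$-slice link of Lemma~\ref{lemma:kernel-realization-by-rationally-slice-links}. The step $(2)\Rightarrow(3)$ is trivial.

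For $(3)\Rightarrow(4)$, let $T$ be a framed Whitney tower of order $n+1$ in a rational homology $4$-ball bounded by $L$. Theorem~\ref{theorem:whitney-tower-milnor-invariant}(1) gives $\mu_k(L)=0$ for all $k\le n$, in particular $\mu_n(L)=0$, and makes $\mu_{n+1}(L)$ well defined. If $n=2\ell-1$ is odd, then $n+1=2\ell$ and Theorem~\ref{theorem:whitney-tower-milnor-invariant}(2) gives $\mu_{2\ell}(L)=\eta_{2\ell}(t^\tw_{2\ell}(T))$; since $T$ is framed, the formal sum $t^\tw_{2\ell}(T)$ contains only order $2\ell$ trees and no $\tw$-trees, so $\mu_{2\ell}(L)$ lies in the subgroup of $\sD_{2\ell}$ generated by the classes $\eta_{2\ell}(t)$ with $t$ an order $2\ell$ tree, which is exactly $\Ker(\slev_{2\ell})$. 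Therefore $\SLev_{2\ell-1}(L)=\slev_{2\ell}(\mu_{2\ell}(L))=0$, which is the remaining part of $(4)$.

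The implication $(4)\Rightarrow(1)$ is the crux. If $n=2\ell$ is even, then $\sB^{\SLev}_n=0$, so $\phi=0$, $L(\phi)$ is an unlink, and $L\csumover{\beta}L(\phi)$ is isotopic to $L$; choosing a framed order $n$ Whitney tower $T$ in $D^4$ bounded by $L$ (which exists by hypothesis), the monomorphism $\widetilde\cT_{2\ell}\to\cT^\tw_{2\ell}\xrightarrow{\eta_{2\ell}}\sD_{2\ell}$ sends $\tilde\tau_n(T)$ to $\mu_n(L)=0$ (using Theorem~\ref{theorem:whitney-tower-milnor-invariant}), so $\tilde\tau_n(T)=0$ and Theorem~\ref{theorem:framed-order-raising} gives $L\in\W_{n+1}$. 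If $n=2\ell-1$ is odd, I would first invoke $\mu_n(L)=0$ and the isomorphism $\eta_{2\ell-1}\colon\cT^\tw_{2\ell-1}\xrightarrow{\cong}\sD_{2\ell-1}$ together with Theorem~\ref{theorem:cst-order-rasing} to upgrade a framed order $n$ tower bounded by $L$ to a \emph{twisted} Whitney tower $S$ of order $2\ell$ in $D^4$ bounded by $L$. Then $T:=\partial^\tw(S)$ is a framed order $n$ tower bounded by $L$ with $\tilde\tau_n(T)=\partial^\tw(\tau^\tw_{2\ell}(S))$. By exactness of the middle row of \eqref{equation:master}, the image of $\partial^\tw$ equals $\Ker\{\widetilde\cT_{2\ell-1}\twoheadrightarrow\cT^\tw_{2\ell-1}\}=\Z_2\otimes\sL'_{\ell+1}$, so $\tilde\tau_n(T)\in\Z_2\otimes\sL'_{\ell+1}$; and chasing the bottom-left square of \eqref{equation:master}, the image of $\tilde\tau_n(T)$ in $\Z_2\otimes\sL_{\ell+1}$ equals $\slev_{2\ell}(\eta_{2\ell}(\tau^\tw_{2\ell}(S)))=\slev_{2\ell}(\mu_{2\ell}(L))=\SLev_{2\ell-1}(L)=0$. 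Hence $\tilde\tau_n(T)\in\sB^{\SLev}_{2\ell-1}$. Setting $\phi:=-\tilde\tau_n(T)\in\sB^{\SLev}_n$ and taking $L(\phi)$ with its framed order $n$ tower $T''$ in $D^4$ satisfying $\tilde\tau_n(T'')=\phi$ from Lemma~\ref{lemma:kernel-realization-by-rationally-slice-links-framed}, I would attach $T\sqcup T''$ to the standard genus zero cobordism from $L\csumover{\beta}L(\phi)$ to $L\sqcup L(\phi)$ to get a framed order $n$ tower bounded by $L\csumover{\beta}L(\phi)$ with $\tilde\tau_n=\tilde\tau_n(T)+\tilde\tau_n(T'')=0$, and conclude $L\csumover{\beta}L(\phi)\in\W_{n+1}$ via Theorem~\ref{theorem:framed-order-raising}.

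The step I expect to be the main obstacle is this last diagram chase in the odd case: one must check that $\tilde\tau_n(\partial^\tw(S))$ is simultaneously computed as the image of $\tau^\tw_{2\ell}(S)$ under $\partial^\tw$ and, through the commutativity packaged in \eqref{equation:master}, as the higher order Sato-Levine invariant $\SLev_{2\ell-1}(L)$ viewed inside $\Z_2\otimes\sL_{\ell+1}$, so that the hypothesis $\SLev_{2\ell-1}(L)=0$ is precisely what places $\tilde\tau_n(T)$ in the realizable kernel $\sB^{\SLev}_{2\ell-1}$. The rest is a faithful copy of the twisted-case argument.
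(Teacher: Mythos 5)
Your proposal is correct and follows essentially the same route as the paper's proof: $(1)\Rightarrow(2)$ by splicing a genus zero cobordism and capping with the $\Z[\tfrac12]$-slice disk for $L(\phi)$, $(3)\Rightarrow(4)$ by noting that a framed tower contributes no $\tw$-trees so $\slev_{2\ell}$ kills $\mu_{2\ell}(L)$, and $(4)\Rightarrow(1)$ by locating $\tilde\tau_n(\partial^\tw(S))$ in $\sB^{\SLev}_{2\ell-1}$ via the diagram \eqref{equation:master} and then applying the framed order-raising theorem. The only cosmetic difference is that in the odd case of $(4)\Rightarrow(1)$ you pass from a framed order $2\ell-1$ tower to a twisted order $2\ell$ tower by applying $\eta_{2\ell-1}\cong$ plus Theorem~\ref{theorem:cst-order-rasing}, where the paper more directly quotes the isomorphism $\mu_{2\ell-1}\colon\sW^\tw_{2\ell-1}\to\sD_{2\ell-1}$; these are the same facts, repackaged.
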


\begin{proof}
  (1)~$\Rightarrow$~(2) is proven in the exactly same way as
  (1)~$\Rightarrow$~(2) of
  Lemma~\ref{lemma:kernel-realization-by-rationally-slice-links},
  using that $L(\phi)$ is rationally slice.  (2)~$\Rightarrow$~(3) is
  trivial.

  Suppose (3) holds, that is, there is an order $n+1$ framed Whitney
  tower $T$ in a rational homology 4-ball bounded by~$L$.  Since $T$
  is an order $n+1$ twisted Whitney tower, $\mu_n(L) = 0$ by
  Theorem~\ref{theorem:whitney-tower-milnor-invariant}.  If
  $n=2\ell-1$, then
  $\SLev_{2\ell-1}(L) = \slev_{2\ell}(\mu_{2\ell}(L)) =
  \slev_{2\ell}(\eta_{2\ell}(t^\tw_{2\ell}(T)))$ by
  Definition~\ref{definition:higher-order-SL} and
  Theorem~\ref{theorem:whitney-tower-milnor-invariant}.  Since $T$ is
  framed, $t^\tw_{2\ell}(T)$ has no $\tw$-tree summand, that is, all
  the summands are order $2\ell$ trees.  By the definition of
  $\slev_{2\ell}$, it follows that
  $\slev_{2\ell}(\eta_{2\ell}(t^\tw_{2\ell}(T)))=0$.  This shows that
  (4) holds.

  Suppose (4) holds.  If $n=2\ell$, then for any fixed order $n$
  framed Whitney tower $T$ in $D^4$ bounded by $L$,
  $\eta_{2\ell}(\tilde\tau_{2\ell}(T)) = \mu_{2\ell}(L) = 0$.  Since
  $\widetilde\cT_{2\ell} \to \cT^\tw_{2\ell}
  \xrightarrow{\eta_{2\ell}} D_{2\ell}$ is injective,
  $\tilde\tau_{2\ell}(T)=0$ in~$\widetilde\cT_{2\ell}$.  By
  Theorem~\ref{theorem:framed-order-raising}, $L$ bounds a framed
  order $n+1$ Whitney tower in~$D^4$.  In particular, (1) holds (with
  $\phi=0$).  If $n=2\ell-1$, then since $\mu_{2\ell-1}(L)=0$ and
  $\mu_{2\ell-1}\colon \sW^\tw_{2\ell-1} \to D_{2\ell-1}$ is an
  isomorphism by
  Theorem~\ref{theorem:cst-integral-theory}~(\ref{item:kernel-of-mu}),
  $L$ bounds an order $2\ell$ twisted Whitney tower $T$ in~$D^4$.
  Using the hypothesis and
  Theorem~\ref{theorem:whitney-tower-milnor-invariant}, we obtain
  \[
    0=\SLev_{2\ell-1}(L) = \slev_{2\ell}(\mu_{2\ell}(L)) =
    \slev_{2\ell}(\eta_{2\ell}(\tau^\tw_{2\ell}(T))).
  \]
  It follows that
  $\tilde\tau_{2\ell-1}(\partial^\tw(T))
  = \partial^\tw(\tau^\tw_{2\ell}(T)) \in \sB^{\SLev}_{2\ell-1}$,
  using the diagram~\eqref{equation:master}.  Let
  $\phi = -\tilde\tau_{2\ell-1}(\partial^\tw(T))$.  Then any band sum
  $L \csumover{\beta} L(\phi)$ bounds a framed Whitney tower $T'$ with
  $\tilde\tau_{2\ell-1}(T') = \tilde\tau_{2\ell-1}(\partial^\tw(T)) +
  \phi = 0 \in \widetilde\cT_{2\ell-1}$.  By
  Theorem~\ref{theorem:framed-order-raising},
  $L \csumover{\beta} L(\phi)$ bounds a framed order $n+1$ Whitney
  tower in $D^4$.  This completes the proof of (4)~$\Rightarrow$~(1).
\end{proof}

Once
Lemma~\ref{lemma:kernel-realization-by-rationally-slice-links-framed}
is given, the following theorem is proven by the argument of the proof
of its twisted analog
Theorem~\ref{theorem:rational-triviality-characterization}, using
Lemma~\ref{lemma:kernel-realization-by-rationally-slice-links-framed}
in place of
Lemma~\ref{lemma:kernel-realization-by-rationally-slice-links}.

\begin{theorem}
  \label{theorem:rational-triviality-characterization-framed}
  For a link $L$ in $S^3$ and $n\ge 0$, the following are equivalent:
  \begin{enumerate}
  \item $L\in \ol\W_{n+1}$, that is, $L$ bounds a framed
    Whitney tower of order $n+1$ in a rational homology 4-ball.
  \item $\mu_k(L)=0$ for $k\le n$, and in addition when $n=2\ell-1$,
    $\SLev_{2\ell-1}(L) = 0$.
  \item For any basing $b$ for $L$, there is a rationally slice link
    $L_0$ with a basing $b_0$ such that
    $L\csumover{(b,b_0)}L_0 \in \W_{n+1}$.
  \item For any basing $b$ for $L$, there is a $\Z[\frac12]$-slice
    link $L_0$ with a basing $b_0$ such that
    $L\csumover{(b,b_0)}L_0 \in \W_{n+1}$.
  \item $L$ bounds a twisted Whitney tower of order $n+1$ in a
    $\Z[\frac12]$-homology 4-ball.
  \end{enumerate}
\end{theorem}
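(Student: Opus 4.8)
The plan is to transcribe the proof of the twisted analogue Theorem~\ref{theorem:rational-triviality-characterization}, replacing Lemma~\ref{lemma:rational-vs-integral-towers-for-link} by its framed version Lemma~\ref{lemma:rational-vs-integral-towers-for-link-framed} and Lemma~\ref{lemma:kernel-realization-by-rationally-slice-links} by Lemma~\ref{lemma:kernel-realization-by-rationally-slice-links-framed}, with the single extra piece of bookkeeping that in odd orders $n=2\ell-1$ the higher order Sato-Levine invariant $\SLev_{2\ell-1}$ must be carried along next to the Milnor invariant. Concretely I would prove the cycle (1)$\Rightarrow$(2)$\Rightarrow$(4)$\Rightarrow$(3)$\Rightarrow$(1) together with (4)$\Rightarrow$(5)$\Rightarrow$(1). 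For (1)$\Rightarrow$(2): if $L$ bounds a framed Whitney tower of order $n+1$ in a rational homology $4$-ball then $\mu_k(L)=0$ for $k\le n$ by Theorem~\ref{theorem:whitney-tower-milnor-invariant}(1); and when $n=2\ell-1$, choosing such a tower $T$ (of order $2\ell$) we get $\SLev_{2\ell-1}(L)=\slev_{2\ell}(\mu_{2\ell}(L))=\slev_{2\ell}(\eta_{2\ell}(t^\tw_{2\ell}(T)))$ by Theorem~\ref{theorem:whitney-tower-milnor-invariant}(2) and Definition~\ref{definition:higher-order-SL}, and since $T$ is framed, $t^\tw_{2\ell}(T)$ carries no $\tw$-tree summand, so $\eta_{2\ell}(t^\tw_{2\ell}(T))\in\ker\slev_{2\ell}$ and $\SLev_{2\ell-1}(L)=0$; this is exactly the (3)$\Rightarrow$(4) step of Lemma~\ref{lemma:rational-vs-integral-towers-for-link-framed}.

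For (2)$\Rightarrow$(4) I would raise the order one stage at a time. Starting from $L\in\W_0$ (which holds since $\mu_0(L)=0$ forces the zero framing), suppose inductively that we have produced $L_i'\in\W_i$ obtained from $L$ by connected sums, along pairwise disjoint basings, with $\Z[\tfrac12]$-slice links $L(\phi_0),\dots,L(\phi_{i-1})$ provided by Lemma~\ref{lemma:kernel-realization-by-rationally-slice-links-framed}; these links are in fact $\Z[\tfrac12]$-slice, not merely rationally slice, because they are built from the figure eight knot via Lemma~\ref{lemma:kernel-realization-by-rationally-slice-links}. Each $L(\phi_j)$, being rationally slice, bounds a framed Whitney tower of every order in a rational homology ball, so by Theorem~\ref{theorem:whitney-tower-milnor-invariant} all of its Milnor invariants vanish, hence $\SLev_{2\ell'-1}(L(\phi_j))=\slev_{2\ell'}(\mu_{2\ell'}(L(\phi_j)))=0$ for every $\ell'$; by Krushkal additivity for $\mu$, and hence for $\SLev_{2\ell-1}=\slev_{2\ell}\circ\mu_{2\ell}$, we get $\mu_k(L_i')=0$ for $k\le i$ and $\SLev_{2\ell-1}(L_i')=0$ when $i=2\ell-1$. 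Thus Lemma~\ref{lemma:rational-vs-integral-towers-for-link-framed}(4)$\Rightarrow$(1) applies to $L_i'$ and yields $\phi_i\in\sB^{\SLev}_i$ with $L_i'\csumover{\beta}L(\phi_i)\in\W_{i+1}$ for any bands; choosing a basing for $L(\phi_i)$ disjoint from those used so far produces $L_{i+1}'\in\W_{i+1}$. After $n+1$ steps, associativity of connected sum along disjoint basings packages the $L(\phi_i)$ into a single $\Z[\tfrac12]$-slice link $L_0$ with a basing $b_0$ such that $L\csumover{(b,b_0)}L_0\in\W_{n+1}$, which is (4); (4)$\Rightarrow$(3) is immediate since $\Z[\tfrac12]$-slice links are rationally slice.

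Finally, (3)$\Rightarrow$(1) and (4)$\Rightarrow$(5) are the standard cobordism argument used for (1)$\Rightarrow$(2) of Lemma~\ref{lemma:rational-vs-integral-towers-for-link-framed}: to the order $n+1$ framed tower in $D^4$ bounding $L\csumover{(b,b_0)}L_0$, attach a standard genus zero cobordism in $S^3\times I$ from $(L\csumover{(b,b_0)}L_0)\times 1$ to $(L\sqcup L_0)\times 0$, reinterpret the result (after splitting off a $D^3$) as an order $n+1$ framed Whitney tower concordance between $L$ and $L_0$ in $S^3\times I$, and cap off the $-L_0$ end with slice disks in a rational, respectively $\Z[\tfrac12]$-, homology $4$-ball; since slice disks have zero twisting, the union is a \emph{framed} Whitney tower of order $n+1$ bounded by $L$ in a rational, respectively $\Z[\tfrac12]$-, homology $4$-ball. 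The implication (5)$\Rightarrow$(1) is trivial, and (3)$\Rightarrow$(1) closes the first cycle.

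I expect the main obstacle to be the bookkeeping in (2)$\Rightarrow$(4): one must check that every auxiliary summand $L(\phi_j)$ is invisible to all the Milnor and Sato-Levine invariants that control the later stages — which hinges on rational sliceness together with additivity — and one must make sure every geometric step (the genus zero cobordisms, the order raising via Theorem~\ref{theorem:framed-order-raising}, and the capping off by slice disks) preserves the \emph{framed} structure rather than merely the twisted one; both points are elementary but rely on using the honest framed and slice structure of the pieces rather than an abstract concordance. Everything else is a direct transcription of the twisted argument.
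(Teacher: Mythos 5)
Your proposal is correct and takes exactly the route the paper indicates: the paper itself only says ``proven by the argument of the proof of its twisted analog \ldots using Lemma~\ref{lemma:kernel-realization-by-rationally-slice-links-framed} in place of Lemma~\ref{lemma:kernel-realization-by-rationally-slice-links}'' and then omits all details, so your expansion — transcribing (1)$\Rightarrow$(2)$\Rightarrow$(4)$\Rightarrow$(3)$\Rightarrow$(1) and (4)$\Rightarrow$(5)$\Rightarrow$(1) from Theorem~\ref{theorem:rational-triviality-characterization}, carrying $\SLev_{2\ell-1}$ alongside $\mu_n$ in the odd case, using Krushkal additivity plus rational-sliceness of the $L(\phi_j)$ to keep the intermediate links in the domain of Lemma~\ref{lemma:rational-vs-integral-towers-for-link-framed}, and noting that the capping-off slice disks preserve the framed structure — is the intended argument, and every step checks out. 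One small observation worth recording: your proof of (4)$\Rightarrow$(5) actually produces a \emph{framed} order $n+1$ Whitney tower in a $\Z[\tfrac12]$-homology $4$-ball (as it must, for (5)$\Rightarrow$(1) to be ``trivial''), whereas the printed statement of item~(5) reads ``twisted''; for odd $n=2\ell-1$ the twisted version would be strictly weaker than~(2) (since the $\SLev_{2\ell-1}$ obstruction is not seen by the twisted filtration), so ``twisted'' there appears to be a transcription slip from the twisted analogue and should read ``framed''. Your argument is correct for the corrected statement and, as you note, is otherwise a direct translation of the twisted proof.
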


We omit details of the proof.

\subsection{Group structure on the rational framed graded quotients}
\label{subsection:framed-graded-quotients}

In this subsection we will formulate the ``graded quotient''
$\ol\sW_n$ of the rational framed filtration~$\{\ol\W_n\}$ and compute
its structure.  Rather unexpectedly, the main remaining difficulty is
to show that the graded quotient has a group structure under band sum.
Once this is resolved, the group can be computed via Milnor invariants
and higher order Sato-Levine invariants, using
Theorem~\ref{theorem:rational-triviality-characterization-framed}.  To
estabilish a group structure, it appears to have significant advantage
to adapt the following definition of an equivalence relation, instead
of framed Whitney tower concordance.

\begin{definition}
  \label{definition:framed-equivalence-relation}
  On the set $\ol\W_n$, define a relation $\approx$ by
  $L\approx L'$ if $L\csumover{\beta}-L' \in \ol\W_{n+1}$ for
  some~$\beta$.
\end{definition}

\begin{lemma}
  \label{lemma:framed-equivalence-relation}
  On $\ol\W_n$, $\approx$ is an equivalence relation.
\end{lemma}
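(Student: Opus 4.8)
The plan is to verify the three axioms in turn; reflexivity and symmetry are quick, and transitivity is where essentially all the work lies. For reflexivity, note that for a suitable band collection $\beta_0$ the band sum $L\csumover{\beta_0}(-L)$ is a slice link in $D^4$ (the standard slicing of $L\csum(-L)$), hence lies in $\W_{n+1}\subseteq\ol\W_{n+1}$, so $L\approx L$. For symmetry, I would first record that $\ol\W_{n+1}$ is closed under orientation reversal $J\mapsto -J$: reflecting a framed order $n+1$ Whitney tower for $J$ in a rational homology $4$-ball $X$ gives one for $-J$ in $-X$, which is again a rational homology $4$-ball with boundary $S^3$, and reversing orientations preserves the framing condition $\omega(D)=0$. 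Since $-(L\csumover{\beta}(-L'))$ is, after re-choosing the bands and using $-(A\csum B)=(-A)\csum(-B)$ together with the symmetry of band sum, a band sum of $L'$ and $-L$, it follows that $L\approx L'$ implies $L'\approx L$.

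For transitivity, suppose $L\csumover{\beta}(-L')\in\ol\W_{n+1}$ and $L'\csumover{\gamma}(-L'')\in\ol\W_{n+1}$, and we seek $\delta$ with $L\csumover{\delta}(-L'')\in\ol\W_{n+1}$. The first ingredient is that $\ol\W_{n+1}$ is closed under connected sum along disjoint basings: given framed order $n+1$ Whitney towers for two links in rational homology $4$-balls $X_1,X_2$, a boundary connected sum $X_1\natural X_2$ performed along the balls carrying the basings is again a rational homology $4$-ball, and the order $0$ disks can be joined across the connecting region by a band (a disk together with a band and a disk is a disk) while the higher stages are left untouched, giving a framed order $n+1$ tower for the connected sum. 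Applying this to $L\csumover{\beta}(-L')$ and $L'\csumover{\gamma}(-L'')$ and then using associativity and commutativity of connected sum along pairwise disjoint basings (as in the proof of Theorem~\ref{theorem:rational-triviality-characterization}), one rewrites the result as $\bigl(L\csumover{\delta}(-L'')\bigr)\csum S\in\ol\W_{n+1}$ for a suitable band collection $\delta$, where $S:=(-L')\csum L'$; and $S$, being a connected sum of $-L'$ and $L'$, is slice in $D^4$, in particular rationally slice, and one can take the slicing disks to induce the zero framing.

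It therefore remains to prove a cancellation statement: \emph{if $A\csumover{(b,b_0)}S\in\ol\W_{n+1}$ and $S$ bounds (suitably framed) slicing disks in a rational homology $4$-ball, then $A\in\ol\W_{n+1}$}; applying it with $A=L\csumover{\delta}(-L'')$ and the slice link $S=(-L')\csum L'$ finishes transitivity. I would prove this by a cap-off construction: starting from a framed order $n+1$ tower $W$ for $A\csumover{(b,b_0)}S$ in a rational homology $4$-ball $X$, attach to $\partial X=S^3$ a collar $S^3\times I$ carrying the standard genus-zero cobordism that separates $A\csumover{(b,b_0)}S$ into the split union $A\sqcup S$; extending $W$ across it turns each order $0$ disk into an annulus cobounding a component of $A$ and the corresponding component of $S$, with $S$ now lying in a ball $B$ in the new boundary, disjoint from $A$. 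Since $S$ bounds slicing disks in a rational homology $4$-ball $Z$ with $S$ sitting in a ball $B'\subset\partial Z$, gluing the extended $X$ to $Z$ along $B\cong B'$ (identifying the two copies of $S$) caps each annulus off with the corresponding slicing disk of $S$, producing embedded disks bounded by $A$; by Mayer--Vietoris the resulting $4$-manifold is a rational homology $4$-ball with boundary $S^3$, the higher stages of $W$ survive unchanged, so $A\in\ol\W_{n+1}$. The step I expect to be the main obstacle is precisely this cap-off: one must split off $S$ all at once, as a single $m$-component sublink in a ball, so that the splitting cobordism contributes only annuli rather than higher-genus surfaces, and one must check that the capped disks induce the zero framing on $A$, which is why $S$ is taken to bound zero-framed slicing disks (true for $(-L')\csum L'$); handling these framing and genus bookkeeping issues carefully is the crux of the argument.
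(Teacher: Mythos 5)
The reflexivity and symmetry parts are fine, and your ``cap-off'' cancellation statement is a genuine and interesting idea (provided one is careful with the gluing orientations and the framing bookkeeping you flag). But the proof of transitivity has a real gap precisely at the step you describe in a single sentence: the claim that $(L\csumover{\beta}(-L'))\csum(L'\csumover{\gamma}(-L''))$ ``rewrites as'' $(L\csumover{\delta}(-L''))\csum\bigl((-L')\csum L'\bigr)$ via ``associativity and commutativity of connected sum along pairwise disjoint basings.'' That associativity applies only to the \emph{basing} bands used for the outer connected sums; the bands $\beta$ and $\gamma$ coming from the hypothesis $L\approx L'$, $L'\approx L''$ are arbitrary. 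In the chain configuration $L - \beta - (-L') - (\text{basing}) - L' - \gamma - (-L'')$ there is no band joining $L$ directly to $-L''$, so re-parenthesizing does not produce a presentation of the form $(L\csumover{\delta}(-L''))\csum S$; in general the two links are simply different (e.g.\ in the one-component case, the left side is a connected sum with nontrivial summand $L'\csum(-L')$ whenever $L'$ is knotted, which need not hold for the right side). This is exactly the band-(in)dependence issue that makes the lemma nontrivial, and once you have it your cap-off step is not even needed in its full strength.

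The paper resolves it by passing to the integral theory: using Lemma~\ref{lemma:rational-vs-integral-towers-for-link-framed} one connect-sums each of $L$, $L'$, $L''$ with a rationally slice link to land in $\W_n$, where Conant--Schneiderman--Teichner's result that band sum is well-defined on $\sW_n=\W_n/\W_{n+1}$ (independently of the choice of bands) licenses exactly the rearrangement you want; one then strips off the rationally slice companions by rational concordance (which is the paper's version of your cap-off). Without invoking that band-independence on $\sW_n$ --- or establishing a rational analogue of it --- the rearrangement step is unjustified, and the argument does not go through.
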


It is straightforward to verify that $\approx$ is symmetric and
reflexive.  In the proof of transitivity, we use the following two
facts: (i) a link in $\ol\W_n$ can always be represented by a link in
$\W_n$, due to
Theorem~\ref{theorem:rational-triviality-characterization-framed}, and
(ii) band sum is well-defined on $\sW_n = \W_n/\W_{n+1}$, due
to~\cite{Conant-Schneiderman-Teichner:2012-2}.

\begin{proof}[Proof of Lemma~\ref{lemma:framed-equivalence-relation}]
  
  We will prove transitivity.  Suppose $L$, $L'$ and $L''$ are in
  $\ol\W_n$ and $L\csumover{\beta}-L'$, $L'\csumover{\gamma}-L''$ are
  in~$\ol\W_{n+1}$.  We need to show that $L\csumover{\alpha}-L''$ is
  in $\ol\W_{n+1}$ for some choice of bands~$\alpha$.

  In what follows, we will repeatedly use a standard fact that if
  $L_0$ is rationally slice, then for any link $L$ and for any
  $\beta$, $L\csumover{\beta}L_0$ is rationally concordant to~$L$.
  The proof is straightforward: choose slice disks $\Delta$ for $L_0$
  in a rational homology $S^3\times I$, choose an arc in the rational
  homology $S^3\times I$ which joins two boundary components and which
  is disjoint from $\Delta$, and replace a tubular neighborhood of the
  arc with $(S^3\sm(\text{3-ball disjoint from $L$}),L)\times I$ to
  obtain a cobordism between $L\sqcup L_0$ and~$L$.  Attach to this a
  standard genus zero cobordism in $S^3\times I$ between
  $L\csumover{\beta}L_0$ and $L\sqcup L_0$ to obtain a concordance
  between $L\csumover{\beta}L_0$ to $L$ in a rational homology
  $S^3\times I$.  The same argument shows that if $L_0\in \ol\W_n$,
  then $L\csumover{\beta}L_0$ is order $n$ framed Whitney tower
  concordant to~$L$ in a rational homology~$S^3\times I$.
  
  Begin with the split union $L\sqcup -L' \sqcup L' \sqcup -L''$, and
  regard $\beta$ and $\gamma$ as disjoint bands joining components of
  sublinks of this split union.  Choose a collection of bands $\delta$
  disjoint from $\beta$ and $\gamma$ to define a band sum
  $-L'\csumover\delta L'$ of the sublinks $-L'$ and~$L'$.  Then
  \[
    J := (L\csumover{\beta}-L')\csumover{\delta}(L'\csumover{\gamma}-L'')
  \]
  is defined.  The link $J$ bounds a framed Whitney tower of order
  $n+1$ in a rational homology 4-ball, since so do
  $L\csumover{\beta}-L'$ and $L'\csumover{\gamma}-L''$.  Fix
  arbitrarily given bands $\alpha$ on $L\sqcup -L''$ to define a band
  sum $L\csumover{\alpha}-L''$.  We claim that $J$ is order $n+1$
  framed Whitney tower concordant to $L\csumover{\alpha}-L''$ in some
  rational homology $S^3\times I$.  Stacking the claimed Whitney tower
  concordance with the above Whitney tower bounded by $J$, it follows
  that $L\csumover{\alpha}-L''$ bounds a framed Whitney tower of
  order~$n+1$ in a rational homology 4-ball.  This completes the
  proof.

  The remaining part is devoted to the proof of the claim.  For a
  basing $c$ of a link $R$, the mirror image of $c$ with reversed
  orientation is a basing of~$-R$.  Denote this basing by~$-c$.  Any
  basing of $-R$ is of the form of~$-c$.  Choose basings $b$, $b'$ and
  $-b''$ for the sublinks $L$, $L'$ and $-L''$ of the split union
  $L \sqcup -L' \sqcup L' \sqcup -L''$ respectively.  We may assume
  that $b$, $b'$ and $-b''$ are mutually disjoint and disjoint from
  $\beta$, $\gamma$ and~$\delta$.  Also, we may assume that $-b'$, as
  a basing of the sublink $-L'$ of the split union, is disjoint from
  all other basings and bands.  Invoke
  Lemma~\ref{lemma:rational-vs-integral-towers-for-link-framed} to
  choose rationally slice links $L_0$, $L'_0$ and $L''_0$ with basings
  $b_0$, $b'_0$, $b''_0$ such that the connected sums
  $L\csumover{(b,b_0)}L_0$, $L'\csumover{(b',b'_0)}L'_0$ and
  $L''\csumover{(b'',b''_0)}L''_0$ are in~$\W_n$.  Let
  \[
    J' := \bigg(\Big(\big((J \csumover{(b,b_0)} L_0)
    \csumover{(-b',-b'_0)} -L'_0 \big) \csumover{(b',b'_0)} L'_0 \Big)
    \csumover{(-b'',-b''_0)} -L''_0\bigg).
  \]
  Since $L_0$, $L'_0$ and $L''_0$ are rationally slice, $J'$ is
  rationally concordant to~$J$.  Define
  $L_1 := L\csumover{(b,b_0)}L_0$,
  $L'_1 := L'\csumover{(b',b'_0)}L'_0$ and
  $L''_1 := L''\csumover{(b'',b''_0)}L''_0$.  Since our bands and
  basings are mutually disjoint, all the band sum and connect sum
  operations are associative.  In particular, we have
  \[
    J' = L_1 \csumover{\beta} -L_1' \csumover{\delta} L'_1
    \csumover{\gamma} -L''_1.
  \]
  Choose a basing $c$ for~$L'_1$ to define a connected sum
  $-L'_1 \csumover{(-c,c)} L'_1$.  Recall that $\alpha$ is the bands
  on $L\sqcup -L''$ given above.  We may assume that both $b$ for $L$
  and $-b''$ for $-L''$ have been chosen to be disjoint from~$\alpha$.
  Then, using $\alpha$ as bands on $L_1\sqcup -L''_1$, a band sum
  $L_1\csumover{\alpha}-L''_1$ is defined.  Choose a collection of
  bands $\epsilon$ to define
  \[
    J'' := (-L'_1 \csumover{(-c,c)} L'_1) \csumover{\epsilon}
    (L_1\csumover{\alpha}-L''_1).
  \]
  Since $L_1$, $L_1'$, $-L_1'$ and $L_1''$ are in $\W_n$, a band sum
  of them is well-defined in $\sW_n = \W_n/\W_{n+1}$, independent of
  the choice of the bands.  Therefore, $J'$ and $J''$ are order $n+1$
  framed Whitney tower concordant in $S^3\times I$.  Since the
  connected sum $-L'_1 \csumover{(-c,c)} L'_1$ is slice in $D^4$,
  $J''$ is concordant to $L_1\csumover{\alpha}-L''_1$.  Since
  \[
    L_1\csumover{\alpha}-L''_1 = L_0 \csumover{(b_0,b)}
    (L\csumover{\alpha}-L'') \csumover{(-b'',-b''_0)} -L''_0,
  \]
  and since $L_0$ and $L''_0$ are rationally slice,
  $L_1\csumover{\alpha}-L''_1$ is rationally concordant to
  $L\csumover{\alpha}-L''$.  This completes the proof of the claim
  that $L\csumover{\alpha}-L''$ is order $n+1$ framed Whitney tower
  concordant to $J$.
\end{proof}

Let $\ol\sW_n$ be the set of equivalence classes of links in $\ol\W_n$
under~$\approx$.  Denote by $[L]\in \ol\sW_n$ the equivalence class of
a link~$L\in \ol\W_n$.

\begin{theorem}
  \label{theorem:abelian-group-under-band-sum-framed}
  The band sum operation $[L]+[L'] := [L\csumover{\beta}L']$ is
  well-defined on $\ol\sW_n$, and $\ol\sW_n$ is an abelian
  group under the band sum operation.
\end{theorem}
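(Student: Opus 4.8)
The plan is to reduce the statement to the integral framed theory---where band sum is already known to be well-defined and to make $\sW_n=\W_n/\W_{n+1}$ an abelian group~\cite{Conant-Schneiderman-Teichner:2012-2}---and then to divide out the rationally slice links that occur only as correction terms. Write $\mathsf{N}_n\le\sW_n$ for the subgroup generated by the classes of the rationally slice links lying in $\W_n$ (for instance the links $L(\phi)$ of Lemma~\ref{lemma:kernel-realization-by-rationally-slice-links-framed}). The core step, which I would isolate as a lemma, is the identification: for $L,L'\in\W_n$ one has $L\approx L'$ in $\ol\W_n$ if and only if $[L]-[L']\in\mathsf{N}_n$ in $\sW_n$. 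For the ``if'' direction, write $[L]-[L']=[S]$ with $S\in\W_n$ a suitable band sum of rationally slice links and their mirrors (again rationally slice and in $\W_n$); then $[L]=[L'\csum S]$ in $\sW_n$, so $L\csum(-(L'\csum S))\in\W_{n+1}\subset\ol\W_{n+1}$ for some choice of bands, whence $L\approx L'\csum S$, while $L'\csum S\approx L'$ because a band sum with a rationally slice link is a rational concordance (the fact proved inside Lemma~\ref{lemma:framed-equivalence-relation}); transitivity gives $L\approx L'$. For ``only if'', a band $\gamma$ witnessing $L\approx L'$ puts $K:=L\csumover{\gamma}(-L')$ in $\W_n\cap\ol\W_{n+1}$, so Lemma~\ref{lemma:rational-vs-integral-towers-for-link-framed} furnishes $\phi\in\sB^{\SLev}_n$ with $K\csum L(\phi)\in\W_{n+1}$; as $L(\phi)$ is rationally slice, $[L]-[L']=[K]=-[L(\phi)]\in\mathsf{N}_n$. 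Together with Theorem~\ref{theorem:rational-triviality-characterization-framed} (applied with $n$ in place of $n-1$), which lets one connect-sum any link of $\ol\W_n$ with a rationally slice link to land in $\W_n$ without changing its $\approx$-class, this identifies $\ol\sW_n$ with $\sW_n/\mathsf{N}_n$ as a set.

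Next I would establish the main lemma behind well-definedness: if $L\approx M$ and $L'\approx M'$ in $\ol\W_n$, then $L\csumover{\beta}L'\approx M\csumover{\beta'}M'$ for all bands $\beta,\beta'$. First connect-sum each of the four links with a rationally slice link supplied by Theorem~\ref{theorem:rational-triviality-characterization-framed}, arranging all bands and basings to be mutually disjoint, so that $L,L',M,M'$ may be assumed to lie in $\W_n$; since a band sum of rationally slice links is rationally slice, reassociating along the disjoint bands shows that these connected sums preserve the $\approx$-class of each of the four links and alter $L\csumover{\beta}L'$ and $M\csumover{\beta'}M'$ only within their $\approx$-classes. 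In $\sW_n$ the identification lemma gives $[L]-[M],\,[L']-[M']\in\mathsf{N}_n$, so, using well-definedness of band sum on $\sW_n$, $[L\csumover{\beta}L']-[M\csumover{\beta'}M']=([L]-[M])+([L']-[M'])\in\mathsf{N}_n$, and the identification lemma turns this back into $L\csumover{\beta}L'\approx M\csumover{\beta'}M'$. Taking $M=L$ and $M'=L'$ shows that $[L]+[L']:=[L\csumover{\beta}L']$ is independent of $\beta$, and the lemma then gives independence of the representatives, so the operation descends to $\ol\sW_n$.

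The group axioms are then routine. Commutativity is the symmetry of band sum; associativity holds because band sum along mutually disjoint bands is literally associative (as already noted in the paper), and band-independence reduces the general case to this one. The unlink $U$ is an identity element: it is slice in $D^4$, hence $U\in\W_k\subset\ol\W_k$ for all $k$, and $L\csumover{\beta}U\approx L$ because $U$ is rationally slice. For inverses, $-L$ lies in $\ol\W_n$ because the reverse mirror of a framed Whitney tower in a rational homology $4$-ball is again one, and for a suitable band $L\csumover{\beta}(-L)$ is slice in $D^4$, so $L\csumover{\beta}(-L)\in\W_{n+1}\subset\ol\W_{n+1}$, giving $[L]+[-L]=0$ by band-independence. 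Finally $\ol\sW_n$ is abelian because $\sW_n$ is; equivalently, $\ol\sW_n\cong\sW_n/\mathsf{N}_n$ as groups.

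The main obstacle I anticipate is purely bookkeeping: arranging all the connect-sum bands and basings to be simultaneously disjoint, giving meaning to the reverse mirror of an iterated band sum, and justifying each reassociation. The one genuinely non-formal input---that a link in $\W_n\cap\ol\W_{n+1}$ lies in $\W_{n+1}$ only after a connected sum with a rationally slice link---has already been isolated in Lemma~\ref{lemma:rational-vs-integral-towers-for-link-framed}, so past that point no new geometric input is required, and the argument runs parallel to, but is formally cleaner than, the transitivity argument of Lemma~\ref{lemma:framed-equivalence-relation}.
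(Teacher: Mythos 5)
Your proof is correct, and it takes the same essential technique as the paper's — passing from $\ol\W_n$ to $\W_n$ by connect-summing with rationally slice links, and invoking the well-definedness of band sum on $\sW_n=\W_n/\W_{n+1}$ from the integral theory — but you package it more cleanly. The paper's proof of this theorem is a hands-on construction: it builds $J=(P\csum-Q)\csum(P'\csum-Q')\in\ol\W_{n+1}$, replaces each of $P,Q,P',Q'$ by a connected sum with a rationally slice link to get $P_1,Q_1,P'_1,Q'_1\in\W_n$, constructs $J'$ and $J''$ as reorganized band sums of these, appeals to band-sum well-definedness in $\sW_n$ to show $J'\sim J''$, and then peels the rationally slice pieces back off to conclude $(P\csum P')\csum-(Q\csum Q')\in\ol\W_{n+1}$. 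You instead isolate the explicit identification $\ol\sW_n\cong\sW_n/\mathsf{N}_n$ (via your ``identification lemma''), where $\mathsf{N}_n$ is the subgroup of $\sW_n$ generated by rationally slice links. That reduces the well-definedness of the operation on $\ol\sW_n$ to a formal quotient-group argument. This is a more conceptual formulation of what the paper does implicitly; it buys transparency at the cost of having to verify that $\mathsf{N}_n$ is in fact a subgroup (i.e., that band sums and mirrors of rationally slice links in $\W_n$ remain rationally slice and in $\W_n$, which is true but worth recording) and that $\mathsf{N}_n$ coincides with $\Ker\{\sW_n\to\ol\sW_n\}$, which your two directions of the identification lemma establish via Lemma~\ref{lemma:rational-vs-integral-towers-for-link-framed} and the rational-concordance argument from Lemma~\ref{lemma:framed-equivalence-relation}. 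All of the ingredients you invoke are available and used correctly; the remaining work is exactly the bookkeeping of disjoint bands and basings that you yourself flag, and the paper carries that out in full.
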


\begin{proof}
  Once we show that the band sum operation is well-defined, it follows
  immediately that $\ol\sW_n$ is an abelian group; the identity is the
  class of a trivial link, and the inverse of $[L]$ is $[-L]$, the
  class of the mirror image of $L$ with reversed orientation, since
  $L\csumover{(b,-b)}-L$ is slice.

  In what follows we will prove the well-definedness.  Suppose
  $P \approx Q$ and $P' \approx Q'$ in $\ol\W_n$, that is,
  $P\csumover\alpha -Q$ and $P'\csumover{\alpha'} -Q'$ are
  in~$\ol\W_{n+1}$ for some $\alpha$ and~$\alpha'$.  We need to
  show that $P\csumover\beta P' \approx Q\csumover\gamma Q'$ for any
  given $\beta$ and~$\gamma$.  We will proceed using essentially the
  same technique as that of the proof of
  Lemma~\ref{lemma:framed-equivalence-relation}.

  Regard $P$, $-Q$, $P'$ and $-Q'$ as sublinks of
  $P\sqcup -Q \sqcup P' \sqcup -Q'$, and choose $\delta$ disjoint from
  $\alpha$ and $\alpha'$ to define~$P\csumover\delta P'$.  Then
  \[
    J := (P\csumover\alpha -Q) \csumover\delta
    (P'\csumover{\alpha'}-Q')
  \]
  lies in~$\ol\W_{n+1}$ since both $P\csumover\alpha-Q$ and
  $P'\csumover{\alpha'}-Q'$ are in~$\ol\W_{n+1}$.
  
  Choose basings $b$, $-c$, $b'$ and $-c'$ of $P$, $-Q$, $P'$ and
  $-Q'$ respectively, in such a way that they are mutually disjoint
  and are disjoint from $\alpha$, $\alpha'$, $\beta$, $\gamma$
  and~$\delta$.  Appealing to
  Theorem~\ref{theorem:rational-triviality-characterization}, choose
  rationally slice links $P_0$, $Q_0$, $P'_0$ and $Q'_0$ with basings
  $b_0$, $c_0$, $b'_0$ and $c'_0$ such that
  $P_1 := P\csumover{(b,b_0)}P_0$, $Q_1 := Q\csumover{(c,c_0)}Q_0$,
  $P'_1 := P'\csumover{(b',b'_0)}P'_0$ and
  $Q'_1 := Q'\csumover{(c',c'_0)}Q'_0$ are in~$\W_n$.  Then
  \[
    J' := (P_1\csumover\alpha -Q_1) \csumover\delta
    (P'_1\csumover{\alpha'}-Q'_1)
  \]
  is rationally concordant to~$J$.

  Choose $\epsilon$ disjoint from $b$, $b'$, $-c$, $-c'$, $\beta$ and
  $\gamma$ to define $P\csumover\epsilon -Q$.  Then
  \[
    J'' := (P_1\csumover\beta P'_1) \csumover\epsilon
    -(Q_1\csumover\gamma Q'_1)
  \]
  is defined.  Furthermore, since band sum is well-defined on
  $\sW_n = \W_n/\W_{n+1}$ independent of the choice of bands and since
  $P_1$, $Q_1$, $P'_1$, $Q'_1 \in \W_n$, $J''$ is order $n+1$ framed
  Whitney tower concordant, in $S^3\times I$, to~$J'$.  Since $P_0$,
  $P'_0$, $Q_0$, $Q'_0$ are rationally slice,
  \[
    J''' := (P\csumover\beta P') \csumover\epsilon -(Q\csumover\gamma Q')
  \]
  is rationally concordant to~$J''$.  Combining the above, it follows
  that $J''' \in \ol\W_{n+1}$, that is,
  $P\csumover\beta P' \approx Q\csumover\gamma Q'$.
\end{proof}

Now we compute the structure of~$\ol\sW_n$.  Recall that $\cM(m,n)$ is
the number of linearly independent Milnor invariants of order $n$ (see
Remark~\ref{remark:rank-of-D_n}).


\begin{proof}[Proof of Theorem~\ref{theorem:structure-graded-quotient-framed}]
  Since $\mu_n(L)$ vanishes for links $L$ in $\ol\W_{n+1}$ by
  Theorem~\ref{theorem:rational-triviality-characterization-framed}
  and since $\mu_n$ is additive under band sum, $\mu_n$ induces a
  group homomorphism $\ol\sW_n\to\sD_n$.  Recall that any class
  $[L]\in \ol\sW_{n}$ is represented by a link $L\in \W_n$ by
  Theorem~\ref{theorem:rational-triviality-characterization-framed}.
  It follows that the natural map $\W_n\to \ol\W_n$ and the induced
  homomorphism $\sW_n\to \ol\sW_n$ are surjective.  Since
  \[
    \begin{tikzcd}
      \sW_n \ar[rr, two heads] \ar[rd, "\mu_n" below left]
      & & \ol\sW_n \ar[ld, "\mu_n"]
      \\
      & \sD_n
    \end{tikzcd}
  \]
  is commutative, the image $\mu_n(\ol\sW_n) \subset \sD_n$ is equal
  to $\mu_n(\sW_n)$.  It is known that $\mu_n(\sW_n)$ has the same
  rank as $\sD_n$, namely has rank~$\cM(m,n)$; for, since the
  realization $\widetilde R_n \colon \widetilde\cT_n \to \sW_n$ is
  surjective and $\mu_n(L)=\eta_n(t^\tw_n(T))$ for a bounding order
  $n$ framed Whitney tower $T\subset D^4$ by
  \cite[Theorem~6]{Conant-Schneiderman-Teichner:2012-3} or
  Theorem~\ref{theorem:whitney-tower-milnor-invariant}, $\mu_n(\sW_n)$
  is equal to the image of $\widetilde\cT_n \to \cT^\tw_n \to \sD_n$,
  which has rank $\cM(m,n)$ as stated in
  Section~\ref{subsection:framed-cst-theory}.  This shows
  Theorem~\ref{theorem:structure-graded-quotient-framed}~(1).

  For $n=2\ell$, $[L]=0$ in $\ol\sW_n$ if and only if $\mu_n(L)=0$,
  by
  Theorem~\ref{theorem:rational-triviality-characterization-framed}.
  From this Theorem~\ref{theorem:structure-graded-quotient-framed}~(2)
  follows.

  For $n=2\ell-1$, $\SLev_{2\ell-1}=\slev_{2\ell}\circ\mu_{2\ell}$ on
  $\Ker\{\mu_{2\ell-1}\}$ is additive under band sum since so
  is~$\mu_{2\ell}$.  Therefore there is an induced homomorphism
  $\SLev_{2\ell-1}\colon \Ker\{\mu_{2\ell-1}\} \to
  \Z_2\otimes\sL_{\ell+1}$.  This is an epimorphism.  For,
  $\mu_{2\ell}\colon \ol\sW_{2\ell}^\tw \to \sD_{2\ell}$ is an
  isomorphism by
  Theorem~\ref{theorem:computation-of-twisted-graded-quotient}~(2),
  and consequently the composition
  $\slev_{2\ell}\circ \mu_{2\ell} \colon \ol\sW_{2\ell}^\tw \to
  \sD_{2\ell} \to \Z_2\otimes\sL_{\ell+1}$ with the quotient
  homomorphism $\slev_{2\ell}$ is surjective.  Since every
  $L\in \ol\W^\tw_{2\ell}$ represents an element
  $[L] \in \Ker\{\mu_{2\ell-1}\} \subset \ol\sW_{2\ell-1}$ and
  $\SLev_{2\ell-1}(L)=\slev_{2\ell}(\mu_{2\ell}(L))$ by the definition
  of~$\SLev_{2\ell-1}$, it follows that
  $\SLev_{2\ell-1}\colon \Ker\{\mu_{2\ell-1}\} \to
  \Z_2\otimes\sL_{\ell+1}$ is surjective.

  If $\mu_{2\ell-1}(L)=0$, then $[L]=0$ in $\ol\sW_n$ if and only if
  $\SLev_{2\ell-1}(L)=0$ by
  Theorem~\ref{theorem:rational-triviality-characterization-framed}.
  It follows that
  $\SLev_{2\ell-1}\colon \Ker\{\mu_{2\ell-1}\} \to
  \Z_2\otimes\sL_{\ell+1}$ is injective.  This completes the proof of
  Theorem~\ref{theorem:structure-graded-quotient-framed}~(3).
\end{proof}


\bibliographystyle{amsalpha}
\def\MR#1{}
\bibliography{research}

\def\cprime{$'$}
\providecommand{\bysame}{\leavevmode\hbox to3em{\hrulefill}\thinspace}
\providecommand{\MR}{\relax\ifhmode\unskip\space\fi MR }
\providecommand{\MRhref}[2]{%
  \href{http://www.ams.org/mathscinet-getitem?mr=#1}{#2}
}
\providecommand{\href}[2]{#2}
\begin{thebibliography}{CST12b}

\bibitem[Cha07]{Cha:2003-1}
Jae~Choon Cha, \emph{The structure of the rational concordance group of knots},
  Mem. Amer. Math. Soc. \textbf{189} (2007), no.~885, x+95. \MR{MR2343079}

\bibitem[Coc90]{Cochran:1990-1}
Tim~D. Cochran, \emph{Derivatives of links: {M}ilnor's concordance invariants
  and {M}assey's products}, Mem. Amer. Math. Soc. \textbf{84} (1990), no.~427,
  x+73. \MR{91c:57005}

\bibitem[COT03]{Cochran-Orr-Teichner:1999-1}
Tim~D. Cochran, Kent~E. Orr, and Peter Teichner, \emph{Knot concordance,
  {W}hitney towers and {$L\sp 2$}-signatures}, Ann. of Math. (2) \textbf{157}
  (2003), no.~2, 433--519. \MR{1 973 052}

\bibitem[CST07]{Conant-Schneiderman-Teichner:2007-1}
James Conant, Rob Schneiderman, and Peter Teichner, \emph{Jacobi identities in
  low-dimensional topology}, Compos. Math. \textbf{143} (2007), no.~3,
  780--810. \MR{2330447}

\bibitem[CST11]{Conant-Schneiderman-Teichner:2011-1}
Jim Conant, Rob Schneiderman, and Peter Teichner, \emph{Higher-order
  intersections in low-dimensional topology}, Proc. Natl. Acad. Sci. USA
  \textbf{108} (2011), no.~20, 8131--8138. \MR{2806650}

\bibitem[CST12a]{Conant-Schneiderman-Teichner:2012-1}
James Conant, Rob Schneiderman, and Peter Teichner, \emph{Tree homology and a
  conjecture of {L}evine}, Geom. Topol. \textbf{16} (2012), no.~1, 555--600.
  \MR{2916294}

\bibitem[CST12b]{Conant-Schneiderman-Teichner:2012-4}
\bysame, \emph{Universal quadratic forms and {W}hitney tower intersection
  invariants}, Proceedings of the {F}reedman {F}est, Geom. Topol. Monogr.,
  vol.~18, Geom. Topol. Publ., Coventry, 2012, pp.~35--60. \MR{3084231}

\bibitem[CST12c]{Conant-Schneiderman-Teichner:2012-2}
\bysame, \emph{{W}hitney tower concordance of classical links}, Geom. Topol.
  \textbf{16} (2012), no.~3, 1419--1479.

\bibitem[CST14]{Conant-Schneiderman-Teichner:2012-3}
J.~Conant, R.~Schneiderman, and P.~Teichner, \emph{{M}ilnor invariants and
  twisted {W}hitney towers}, J. Topol. \textbf{7} (2014), no.~1, 187--224.
  \MR{3180617}

\bibitem[CT04a]{Conant-Teichner:2004-1}
James Conant and Peter Teichner, \emph{Grope cobordism and {F}eynman diagrams},
  Math. Ann. \textbf{328} (2004), no.~1-2, 135--171. \MR{2030373}

\bibitem[CT04b]{Conant-Teichner:2004-2}
\bysame, \emph{Grope cobordism of classical knots}, Topology \textbf{43}
  (2004), no.~1, 119--156. \MR{2030589}

\bibitem[Dwy75]{Dwyer:1975-1}
William~G. Dwyer, \emph{Homology, {M}assey products and maps between groups},
  J. Pure Appl. Algebra \textbf{6} (1975), no.~2, 177--190. \MR{0385851 (52
  \#6710)}

\bibitem[FQ90]{Freedman-Quinn:1990-1}
Michael~H. Freedman and Frank Quinn, \emph{Topology of 4-manifolds}, Princeton
  Mathematical Series, vol.~39, Princeton University Press, Princeton, NJ,
  1990. \MR{MR1201584 (94b:57021)}

\bibitem[FT95]{Freedman-Teichner:1995-2}
Michael~H. Freedman and Peter Teichner, \emph{{$4$}-manifold topology. {II}.
  {D}wyer's filtration and surgery kernels}, Invent. Math. \textbf{122} (1995),
  no.~3, 531--557. \MR{MR1359603 (96k:57016)}

\bibitem[IO01]{Igusa-Orr:2001-1}
Kiyoshi Igusa and Kent~E. Orr, \emph{Links, pictures and the homology of
  nilpotent groups}, Topology \textbf{40} (2001), no.~6, 1125--1166.
  \MR{1867241}

\bibitem[Kru98]{Krushkal:1998-1}
Vyacheslav~S. Krushkal, \emph{Additivity properties of {M}ilnor's
  $\overline\mu$-invariants}, J. Knot Theory Ramifications \textbf{7} (1998),
  no.~5, 625--637. \MR{2000a:57011}

\bibitem[KT97]{Krushkal:1997-1}
Vyacheslav~S. Krushkal and Peter Teichner, \emph{Alexander duality, gropes and
  link homotopy}, Geom. Topol. \textbf{1} (1997), 51--69 (electronic).
  \MR{MR1475554 (98i:57013)}

\bibitem[Lev01]{Levine:2001-1}
Jerome~P. Levine, \emph{Homology cylinders: an enlargement of the mapping class
  group}, Algebr. Geom. Topol. \textbf{1} (2001), 243--270 (electronic).
  \MR{MR1823501 (2002m:57020)}

\bibitem[Lev02]{Levine:2001-2}
\bysame, \emph{Addendum and correction to: ``{H}omology cylinders: an
  enlargement of the mapping class group'' [{A}lgebr. {G}eom. {T}opol. {\bf 1}
  (2001), 243--270; {MR}1823501 (2002m:57020)]}, Algebr. Geom. Topol.
  \textbf{2} (2002), 1197--1204 (electronic). \MR{1943338 (2003k:57022)}

\bibitem[Mil57]{Milnor:1957-1}
John~W. Milnor, \emph{Isotopy of links. {A}lgebraic geometry and topology}, A
  symposium in honor of S. Lefschetz, Princeton University Press, Princeton, N.
  J., 1957, pp.~280--306. \MR{19,1070c}

\bibitem[MKS66]{Magnus-Karrass-Solitar:1966-1}
Wilhelm Magnus, Abraham Karrass, and Donald Solitar, \emph{Combinatorial group
  theory: {P}resentations of groups in terms of generators and relations},
  Interscience Publishers [John Wiley \& Sons, Inc.], New York-London-Sydney,
  1966. \MR{34 \#7617}

\bibitem[Orr89]{Orr:1989-1}
Kent~E. Orr, \emph{Homotopy invariants of links}, Invent. Math. \textbf{95}
  (1989), no.~2, 379--394. \MR{90a:57012}

\bibitem[Sch06]{Schneiderman:2006-1}
Rob Schneiderman, \emph{Whitney towers and gropes in 4-manifolds}, Trans. Amer.
  Math. Soc. \textbf{358} (2006), no.~10, 4251--4278 (electronic). \MR{2231378
  (2007e:57018)}

\bibitem[ST04]{Schneiderman-Teichner:2004-1}
Rob Schneiderman and Peter Teichner, \emph{{W}hitney towers and the
  {K}ontsevich integral}, Proceedings of the {C}asson {F}est, Geom. Topol.
  Monogr., vol.~7, Geom. Topol. Publ., Coventry, 2004, pp.~101--134
  (electronic). \MR{2172480}

\bibitem[Sta65]{Stallings:1965-1}
John Stallings, \emph{Homology and central series of groups}, J. Algebra
  \textbf{2} (1965), 170--181. \MR{31 \#232}

\end{thebibliography}

\end{document}